\theoremstyle{plain}
\newtheorem{thm}{Theorem}[section]
\newaliascnt{lem}{thm}
\newtheorem{lem}[lem]{Lemma}
\crefname{lem}{Lemma}{Lemmas}
\newaliascnt{cor}{thm}
\newtheorem{cor}[cor]{Corollary}
\crefname{cor}{Corollary}{Corollaries}
\newaliascnt{prop}{thm}
\newtheorem{prop}[prop]{Proposition}
\crefname{prop}{Proposition}{Propositions}
\newaliascnt{conj}{thm}
\crefname{conj}{Conjecture}{Conjectures}
\theoremstyle{definition}
\newaliascnt{defn}{thm}
\newtheorem{defn}[defn]{Definition}
\crefname{defn}{Definition}{Definitions}
\newaliascnt{ex}{thm}
\newtheorem{ex}[ex]{Example}
\crefname{ex}{Example}{Examples}
\theoremstyle{remark}
\newaliascnt{rmk}{thm}
\newtheorem{rmk}[rmk]{Remark}
\crefname{rmk}{Remark}{Remarks}
\newaliascnt{clm}{thm}
\crefname{clm}{Claim}{Claims}
\numberwithin{equation}{section}
\newcommand{\nc}[3]{\newcommand{#2}[#1]{#3}}
\DeclareMathOperator{\Br}{Br}
\DeclareMathOperator{\Gal}{Gal}
\DeclareMathOperator{\id}{id}
\DeclareMathOperator{\Nm}{Nm}
\DeclareMathOperator{\Aut}{Aut}
\DeclareMathOperator{\tame}{tame}
\DeclareMathOperator{\unr}{ur}
\title{Pro-$\ell$-by-cyclotomic and tamely ramified variants of the Neukirch-Uchida Theorem}
\author{Ido Karshon}
\author{Mark Shusterman} \thanks{M.S. is The Dr. A. Edward Friedmann Career Development Chair in Mathematics}
\address{Faculty of Mathematics and Computer Science, Weizmann
Institute of Science, 234 Herzl Street, Rehovot 76100, Israel.}
\email{ido.karshon@weizmann.ac.il}
\address{Faculty of Mathematics and Computer Science, Weizmann
Institute of Science, 234 Herzl Street, Rehovot 76100, Israel.}
\email{mark.shusterman@weizmann.ac.il}
\date{\today}
\begin{document}

\begin{abstract}
    We prove a generalization of the Neukirch-Uchida Theorem. In particular,
    we show that the isomorphism type of a number field $K$
    can be recovered from the maximal pro-$\ell$-by-cyclotomic quotient of its absolute Galois group $\G{\ov{K}/K}$.
    This should be contrasted with the previous result that the isomorphism type cannot, in general, be recovered from the maximal pronilpotent quotient.
    We also show that the isomorphism type can be recovered from the maximal tamely ramified quotient.
\end{abstract}

\maketitle
\section{Introduction}\label{sec:introduction}

The Neukirch-Uchida Theorem states the following.
\begin{thm}\label{thm:classical neukirch uchida}
    Let $K_1, K_2$ be number fields with algebraic closures $\ov{K_1}, \ov{K_2}$.
    If $\al:\G{\ov{K_1}/K_1} \to \G{\ov{K_2}/K_2}$ is an isomorphism of profinite groups,
    then there exists an isomorphism of field extensions $\s:\ov{K_1}/K_1 \to \ov{K_2}/K_2$ that induces $\al$,
    and it is unique.
\end{thm}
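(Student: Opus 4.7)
The plan is to follow the classical two-part strategy: first, extract from the abstract group isomorphism $\al$ a bijection between the sets of non-archimedean places of $K_1$ and $K_2$ that is compatible with local completion data; second, use this bijection to reconstruct the field isomorphism. For the first part, the key is Neukirch's purely group-theoretic characterization of decomposition subgroups $D_\p \sub \G{\ov{K_i}/K_i}$ at finite places: one characterizes them, for instance, as maximal closed subgroups $D$ of cohomological dimension $2$ satisfying local Tate duality with the correct Euler-Poincar\'e behavior. Since $\al$ preserves all such cohomological invariants, it induces a bijection $\s_*:\Places(K_1) \to \Places(K_2)$ of finite places via conjugacy classes of decomposition groups, and moreover matches the inertia subgroups $I_\p \sub D_\p$, which admit an analogous group-theoretic description.

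From each matched pair $(D_\p, I_\p)$, local class field theory lets one recover the completion $K_{i,\p}$ up to topological group isomorphism. In particular, the residue characteristic can be read off from the $\ell$-cohomological dimensions of $I_\p$, and the norm $\Nm(\p)$ from the Frobenius generator of $D_\p / I_\p$ acting on $I_\p^{\ab}$. Thus $\s_*$ preserves residue characteristic and norms of primes, and more finely identifies the local Galois modules attached to $K_1$ and $K_2$.

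For the second part, due in the general (non-Galois) case to Uchida, one uses that the splitting behavior of primes in every finite extension $L/K_i$ is encoded by the conjugation action of $\G{\ov{K_i}/K_i}$ on the family of decomposition groups, hence is transported by $\al$. A theorem of Bauer, as extended by Uchida, asserts that the isomorphism class of a number field is determined by the set of rational primes having a prescribed splitting type in it; combined with the Chebotarev density theorem and with the reciprocity isomorphism between idele class groups and abelianized Galois groups, this produces a field isomorphism $\s:\ov{K_1} \to \ov{K_2}$ carrying $K_1$ onto $K_2$ and inducing $\al$. Uniqueness is then automatic: any other $\s'$ inducing $\al$ would, after composing with $\s\inv$, centralize $\G{\ov{K_1}/K_1}$, and the absolute Galois group of a number field is known to have trivial center.

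The main obstacle is the first step: producing a robust, purely group-theoretic characterization of decomposition and inertia subgroups inside $\G{\ov{K_i}/K_i}$ is the technical heart of Neukirch's contribution and requires delicate cohomological input (local Tate duality, structure theorems for absolute Galois groups of local fields, a careful analysis of Brauer groups). Once decomposition data are securely transported by $\al$, the remaining passage through global class field theory and Chebotarev, while intricate, is comparatively formulaic.
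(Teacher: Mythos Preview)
Your sketch follows the classical Neukirch--Uchida blueprint and is broadly sound as an outline, though several of the characterizations you invoke (e.g.\ ``maximal closed subgroups of cohomological dimension $2$ satisfying local Tate duality'') are stated imprecisely and would require substantial work to make rigorous. Note, however, that the paper does \emph{not} prove \cref{thm:classical neukirch uchida} directly: it is stated in the introduction as the known classical result. The paper's own contribution is \cref{thm:neukirch uchida}, of which the classical statement is the special case $\Om_i = \ov{K_i}$ (algebraically closed fields are trivially $\ell$-sealed with $S = \emptyset$ and abundant).

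Comparing your outline to the paper's route (specialized to $\Om_i = \ov{K_i}$): both proceed by first locating decomposition groups group-theoretically and then lifting to a field isomorphism, but the mechanisms differ. Where you identify decomposition groups via cohomological dimension and local duality, the paper uses its $\ell$-sealed Brauer sequence (\cref{thm:brauer sequence}) and an analysis of $H^2(\G{\Om/E},\mu_\ell)$ to pin down the unique place with nonvanishing local $H^2$ (\cref{thm:neukirch}). Where you read the residue characteristic from the $\ell$-cohomological dimension of inertia, the paper uses its abundance machinery (\cref{prop:local characteristic detection}). Where you pass to a field isomorphism via ``Bauer's theorem as extended by Uchida'' and reciprocity, the paper first obtains arithmetic equivalence (\cref{cor:arith equiv}), then runs an explicit embedding-problem argument in $\F_\ell[G]^n$ (\cref{lem:embedding problem}, \cref{thm:uchida isomorphism}) to force conjugacy of the relevant subgroups, and finally upgrades to the full statement via a compactness argument in $\G\Q$. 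For uniqueness you invoke centerlessness of $\G K$; the paper's argument (end of the proof of \cref{thm:neukirch uchida}) instead shows directly that a central element fixes all places outside $S$ and is therefore trivial by Chebotarev. Your approach is the historically standard one and is tailored to the full absolute Galois group; the paper's approach is designed to survive passage to the smaller $\ell$-sealed quotients, which is the point of the paper.
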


Here, an isomorphism of field extensions $\s:L_1/K_1 \to L_2/K_2$
is an isomorphism $\s:L_1 \to L_2$ such that $\s(K_1) = K_2$.

There have been many results generalizing \cref{thm:classical neukirch uchida}
to the case where $\G{\ov{K_i}/K_i}$ is replaced by some quotient of it.
For instance, a Neukirch-Uchida variant was proven
for the maximal prosolvable quotient \cite{solvable Neukirch-Uchida},
and later for the maximal three-step solvable quotient \cite{3-solvable Neukirch-Uchida},
although the uniqueness requirement on $\s$ needs to be weakened in this case.
Variants with restricted ramification have been proven in
\cite{restricted ramification Neukirch-Uchida, 
restricted ramification Neukirch-Uchida 2,
restricted ramification Neukirch-Uchida with stable sets}.
A variant for $p$-closed extensions
(studying a property which we also consider in this paper,
but obtaining a result of a very different nature)
was proven in \cite{p-closed almost everywhere Neukirch-Uchida}.
Recently, a Neukirch-Uchida variant was proven for the quotient $\G{\un{K(\mu_\infty)}/K}$ \cite{unramified-by-cyclotomic Neukirch-Uchida}.
Further work on Neukirch-Uchida variants for number fields
includes \cite{pro-C Neukirch-Uchida} 
and a work in progress by Pop and Topaz.
As for negative results,
it was shown that the two-step nilpotent quotient of $\G{\ov K/K}$
does not recover $K$
\cite{2-nilpotent anti-Neukirch-Uchida},
and later that the pronilpotent quotient does not recover it \cite{nilpotent anti-Neukirch-Uchida}.
It was also shown that the isomorphism types of the $p$-Sylow subgroups of $\G{\ov K/K}$ 
do not recover $K$ \cite{sylow anti-Neukirch-Uchida}.

In this paper we prove another variant of the Neukirch-Uchida Theorem.
Our main technical result is stated in \cref{thm:neukirch uchida},
and its formulation uses \cref{def:ell-sealed} and \cref{def:abundance}.
Here we state some consequences.

\begin{thm}\label{thm:pro-ell-by-cyclotomic neukirch uchida}
    Let $K_1, K_2$ be number fields
    and let $\Om_i/K_i$ be the maximal pro-$\ell$ extension
    of $K_i(\mu_\infty)$.
    If $\al:\G{\Om_1/K_1} \to \G{\Om_2/K_2}$ is an isomorphism of profinite groups,
    then there exists an isomorphism of field extensions $\s:\Om_1/K_1 \to \Om_2/K_2$
    that induces $\al$,
    and it is unique.
\end{thm}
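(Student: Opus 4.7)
The plan is to deduce \cref{thm:pro-ell-by-cyclotomic neukirch uchida} from the main technical result \cref{thm:neukirch uchida} by verifying its two hypotheses, $\ell$-sealedness (\cref{def:ell-sealed}) and abundance (\cref{def:abundance}), for the extensions $\Om_i/K_i$. Once these are checked, \cref{thm:neukirch uchida} produces the unique field isomorphism $\s \colon \Om_1/K_1 \to \Om_2/K_2$ inducing $\al$, establishing both existence and uniqueness simultaneously, with no additional work needed for the conclusion itself.

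The verification of $\ell$-sealedness should be essentially tautological. By construction $\Om_i$ is the maximal pro-$\ell$ extension of $K_i(\mu_\infty)$, so any pro-$\ell$ extension of any intermediate field $K_i(\mu_\infty) \sub L \sub \Om_i$ is automatically contained in $\Om_i$; this closure under pro-$\ell$ extensions above the cyclotomic base is what $\ell$-sealedness should be capturing, and its verification should reduce to unpacking \cref{def:ell-sealed} against the definition of $\Om_i$.

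For abundance, the strategy is to exploit the fact that $G_{\Om_i/K_i}$ surjects onto $G_{K_i(\mu_\infty)/K_i}$, an open subgroup of $\Aut(\mu_\infty) = \hat\Z^\x$. By global class field theory, the Frobenius at an unramified prime $\p$ of $K_i$ projects to the class of $\Nm\p$ in $\hat\Z^\x$, so the cyclotomic quotient of $G_{\Om_i/K_i}$ provides a rich supply of distinguishable Frobenius elements via Chebotarev. Above each such prime the pro-$\ell$ tower contributes full local pro-$\ell$ Galois theory, because the completion at $\p$ already contains $\mu_{\ell^\infty}$ inside $\Om_i$. Combining these two ingredients should produce the density of primes whose decomposition-inertia behaviour in $G_{\Om_i/K_i}$ carries the local data required by \cref{def:abundance}.

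I expect the abundance condition to be the principal obstacle. The pro-$\ell$-by-cyclotomic quotient is considerably smaller than the full absolute Galois group, so its decomposition and inertia subgroups are correspondingly thin, and separating primes at this level is delicate; there is far less room to maneuver than in the classical Neukirch-Uchida setting or even in the unramified-by-cyclotomic variant of \cite{unramified-by-cyclotomic Neukirch-Uchida}. Most of the technical work will lie in showing that the interaction between cyclotomic Chebotarev and pro-$\ell$ local class field theory at primes of $K_i$ above $\ell$ yields precisely the form of abundance demanded by \cref{def:abundance}, rather than merely a weaker qualitative density statement.
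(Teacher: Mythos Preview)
Your overall architecture is exactly what the paper does: verify that $\Om_i$ is $\ell$-sealed and abundant, then invoke \cref{thm:neukirch uchida}. Your treatment of $\ell$-sealedness is also on target --- $\Om_i$ has no $\Z/\ell\Z$-extensions at all, so it is $\ell$-sealed with respect to $S=\emptyset$ by the first part of \cref{ex:ell-sealed-examples}.

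Where you go astray is in the abundance check, which you describe as the ``principal obstacle'' and propose to attack via Chebotarev density of Frobenius elements in the cyclotomic quotient and local class field theory above~$\ell$. None of that is needed, and in fact none of it addresses what \cref{def:abundance} actually asks. Abundance is a purely local, field-theoretic condition: one must show that for (density-one many) rational primes $p$ and all places $\fk{P}$ of $\Om_i$ above $p$, the completion $(\Om_i)_\fk{P}$ contains either $\tm{\Q_p}$ or the $\Z_p^2$-extension of $\Q_p$. But $\Om_i \supseteq K_i(\mu_\infty) \supseteq \Q(\mu_\infty)$, so $(\Om_i)_\fk{P} \supseteq \Q_p(\mu_\infty)$ for \emph{every} $p$ and every $\fk{P}$; and $\Q_p(\mu_\infty)$ contains both the unramified $\Z_p$-extension and the cyclotomic $\Z_p$-extension of $\Q_p$, hence the $\Z_p^2$-extension. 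This is exactly \cref{ex:abundance-examples}, and it settles abundance in one line, with empty exceptional set. Your Frobenius/Chebotarev discussion suggests you are reading ``abundance'' as a statement about distinguishing decomposition groups inside $\G{\Om_i/K_i}$; reread \cref{def:abundance} and you will see it is a condition on the local fields $(\Om_i)_\fk{P}$ themselves, not on how their Galois groups sit inside the global one.
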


It also follows from \cref{thm:neukirch uchida} that this holds
when $K(\mu_\infty)$ is replaced by the maximal abelian extension of $K$.
Thus, the maximal pro-$\ell$-by-abelian quotient of $\G{\ov K/K}$ determines $K$.
Since a pronilpotent group is isomorphic to the direct product of its Sylow subgroups,
it follows that the maximal pronilpotent-by-abelian
quotient also determines $K$.
In \cite{nilpotent anti-Neukirch-Uchida},
as stated before,
it is shown that the maximal pronilpotent quotient of $\G{\ov K/K}$
does not determine $K$.

We note that the maximal prosupersolvable quotient lies strictly between the maximal pronilpotent-by-abelian
and the maximal pronilpotent quotients.
Further, the maximal pro-$\ell$ extension of $K(\mu_\ell)$
(which is relevant to our proof)
is a prosupersolvable extension of $K$, being an extension of a pro-$\ell$ group by an abelian group of exponent dividing $\ell - 1$. 
These observations suggest
the prosupersolvable Neukirch-Uchida variant 
as a direction for future research.

\begin{thm}\label{thm:tamely ramified neukirch uchida}
    Let $K_1, K_2$ be number fields
    and let $\tm{K_i}$ denote the maximal tamely ramified extension of $K_i$.
    If $\al:\G{\tm{K_1}/K_1} \to \G{\tm{K_2}/K_2}$ is an isomorphism of profinite groups,
    then there exists an isomorphism of field extensions $\s:\tm{K_1}/K_1 \to \tm{K_2}/K_2$
    that induces $\al$, and it is unique.
\end{thm}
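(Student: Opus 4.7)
The plan is to deduce \cref{thm:tamely ramified neukirch uchida} from the main technical result \cref{thm:neukirch uchida}, in a manner parallel to the deduction of \cref{thm:pro-ell-by-cyclotomic neukirch uchida}. The key observation is that although $K(\mu_\infty)$ is not contained in $\tm{K}$ (being wildly ramified above any prime $p$ that appears in its $p$-power part), for each individual prime $\ell$ we do have $K(\mu_\ell) \sub \tm{K}$, since the ramification of $K(\mu_\ell)/K$ above $\ell$ has index dividing $\ell-1$. This supplies a pro-$\ell$-by-abelian structure sitting inside $\tm{K}/K$ for every $\ell$.

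For each prime $\ell$, I would let $\Om_\ell/K$ be the maximal subextension of $\tm{K}/K$ that is pro-$\ell$ over $K(\mu_\ell)$---equivalently, the maximal pro-$\ell$ extension of $K(\mu_\ell)$ unramified at primes above $\ell$ (pro-$\ell$ extensions are automatically tame at primes of residue characteristic different from $\ell$). If the definitions of $\ell$-sealedness and abundance in \cref{thm:neukirch uchida} require a richer abelian base, one replaces $K(\mu_\ell)$ by the maximal tame abelian extension $K^{\ab} \cap \tm{K}$, which already contains $\mu_n$ for every squarefree $n$. The main technical step is to verify that $\Om_\ell/K$ satisfies $\ell$-sealedness and abundance: abundance should follow from a Chebotarev-style argument producing sufficiently many split primes in auxiliary tame extensions, while $\ell$-sealedness---the assertion that a suitable intermediate field $M$ satisfies $\perfect{M}$---requires checking that imposing unramifiedness above $\ell$ does not destroy the relevant perfectness property. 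I expect this verification to be the main obstacle.

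Next, $\G{\Om_\ell/K}$ has to be identified as a group-theoretic quotient of $\G{\tm{K}/K}$. The mod-$\ell$ cyclotomic character $\chi_\ell:\G{\tm{K}/K} \to \pr{\Z/\ell}^\x$ can be recovered from the profinite structure (for instance via the action of Frobenius elements at unramified primes on tame inertia), so its kernel cuts out $\G{\tm{K}/K(\mu_\ell)}$, and the maximal pro-$\ell$ quotient of this kernel is precisely $\G{\Om_\ell/K(\mu_\ell)}$. Consequently $\al$ descends to an isomorphism $\al_\ell:\G{\Om_{1,\ell}/K_1} \to \G{\Om_{2,\ell}/K_2}$, and \cref{thm:neukirch uchida} then supplies a unique field isomorphism $\s_\ell:\Om_{1,\ell}/K_1 \to \Om_{2,\ell}/K_2$ inducing $\al_\ell$.

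Finally, by the uniqueness clause in \cref{thm:neukirch uchida}, the restrictions $\s_\ell|_{K_1}$ coincide for all $\ell$, producing a single field isomorphism $\s_0:K_1 \to K_2$. Extending $\s_0$ to an isomorphism $\s:\tm{K_1} \to \tm{K_2}$ and requiring $\s$ to induce $\al$ pins down a unique extension, since the residual indeterminacy lies in the center of $\G{\tm{K_2}/K_2}$, which is trivial; existence of such an $\s$ follows from the compatibility of the $\s_\ell$ with $\al$ across all $\ell$ via the group-theoretic reconstruction in the previous step. This yields both existence and uniqueness of $\s$.
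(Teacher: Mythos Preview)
Your plan takes a far more circuitous route than the paper's, and it contains a fatal gap.

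The paper's deduction is one line: the extension $\tm{K_i}/K_i$ is \emph{itself} $\ell$-sealed with respect to $S=\{\ell\}$ (\cref{ex:ell-sealed-examples}(2): any $\Z/\ell\Z$-extension of $\tm{K_i}$ must be wildly ramified, and for a degree-$\ell$ extension that can happen only above $\ell$) and abundant (\cref{ex:abundance-examples}: $\tm{K_i}\supseteq\tm{\Q}$, and $(\tm{\Q})_{\fk P}\iso\tm{\Q_p}$ at every finite place). Hence \cref{thm:neukirch uchida} applies directly with $\Om_i=\tm{K_i}$. No passage to pro-$\ell$ subextensions is needed.

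Your proposed intermediate fields $\Om_\ell$ are \emph{not} abundant, so \cref{thm:neukirch uchida} does not apply to them. Fix any rational prime $p\ne\ell$ and $\fk P\in\Places_p(\Om_\ell)$. The degree $[(\Om_\ell)_{\fk P}:\Q_p]$ divides $[K(\mu_\ell):\Q]\cdot\ell^\infty$, so its $r$-part is finite for every prime $r\ne\ell$. Consequently $(\Om_\ell)_{\fk P}$ cannot contain $\tm{\Q_p}$ (which requires $r^\infty\mid[(\Om_\ell)_{\fk P}:\Q_p]$ for all $r\ne p$), nor can it contain the $\Z_p^2$-extension of $\Q_p$ (which is wildly ramified at $p$ and hence not inside $\tm K$ at all). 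Replacing the base $K(\mu_\ell)$ by $K^{\ab}\cap\tm K$ does not help: by local class field theory the inertia in the maximal tame abelian extension of $K_\p$ has order $q-1$, so after a pro-$\ell$ top the local ramification index is still a finite number times an $\ell$-power, far short of $\tm{\Q_p}$. Thus for every $p\ne\ell$ abundance fails, and your Chebotarev heuristic cannot repair this: the obstruction is local and holds at \emph{all} primes $p\ne\ell$, not merely a density-zero set.

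There are secondary gaps as well: your recovery of $\ker\chi_\ell$ from the bare profinite group $\G{\tm K/K}$ is not justified (identifying Frobenius elements or tame inertia already presupposes the arithmetic structure you are trying to extract), and the final gluing of the $\s_\ell$ into a single $\s$ on all of $\tm{K_1}$ is only gestured at. But these points are moot once you notice that $\tm{K_i}$ already meets the hypotheses of \cref{thm:neukirch uchida}.
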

This can be seen as another restricted ramification Neukirch-Uchida variant,
this time eliminating wild ramification at all primes
instead of eliminating ramification at a subset of primes.

The structure of the paper is as follows.
\begin{itemize}
    \item In \cref{sec:notation}
    we define our notation.
    \item In \cref{sec:ell-sealed-cohomology}
    we define the notion of $\ell$-sealed fields
    and study their cohomologies.
    \item In \cref{sec:brauer-sequence}
    we prove an $\ell$-sealed analogue of the Brauer exact sequence.
    \item In \cref{sec:neukirch}
    we use the Brauer sequence to prove
    an $\ell$-sealed analogue of Neukirch's Theorem.
    The section begins with an overview of this proof.
    \item In \cref{sec:char-detect}
    we define abundant fields,
    which allow us to distinguish between primes of different residue characteristics
    using only group-theoretic information.
    We get a strengthening of Neukirch's Theorem
    in the case of extensions that are both $\ell$-sealed and abundant.
    \item In \cref{sec:prime-bijection}
    we construct a bijection between the primes of the base fields
    that preserves residue characteristics.
    We deduce that the base fields are arithmetically equivalent.
    \item In \cref{sec:uchida-isomorphism}
    we adapt Uchida's method from \cite{solvable Neukirch-Uchida}
    to lift the arithmetic equivalence
    into an isomorphism of the base fields,
    under the assumption that the base fields contain $\mu_\ell$.
    This can already be seen as a weak version of Neukirch-Uchida.
    Originally Uchida used this method to prove the solvable Neukirch-Uchida variant
    in full generality,
    but we choose to give a simplified version of his argument 
    that only handles the case with $\mu_\ell$ and only proves an isomorphism of the base fields.
    \item In \cref{sec:full-neukirch-uchida}
    we deduce the full Neukirch-Uchida Theorem
    from the previous weak version, finishing the proof of our main result.
    The section begins with an overview of the proof.
    We also prove \cref{thm:pro-ell-by-cyclotomic neukirch uchida} and \cref{thm:tamely ramified neukirch uchida}.
\end{itemize}

\section{Notation and Preliminaries}\label{sec:notation}

We write $\ov K$ for the algebraic closure of $K$.
If $K$ is a number field or a local field, we also write
$\un K$ for its maximal unramified extension
and $\tm K$ for its maximal tamely ramified extension.
We write $\G{L/K}$ for $\Gal(L/K)$
and $\G{K}$ for $\G{\ov K/K}$.
When we refer to a number field or a local field, 
we always assume it has finite degree over its prime field.

Let $E$ be an algebraic extension of $\Q$.
We denote the set of places of $E$ by $\Places(E)$;
if $E/\Q$ has infinite degree, this is defined as $\invlim_{K \sub E} \Places(K)$,
where $K$ runs over the number fields contained in $E$,
and the limit is taken with respect to restrictions.
We refer to the elements of $\Places(\Q)$
as rational places.
Note that $\Places(E)$ has the structure of a locally profinite topological space.
We write $\Places_\fin(E)$ for the subspace of non-Archimedean places.
Given $S \sub \Places(E)$ and an algebraic extension $E'/E$,
we write $S(E')$ for the set of places of $E'$ lying over a place in $S$.
If $S = \{\p\}$ is a singleton then we also write $S(E')$ as $\Places_\p(E')$.
We say that a place $\p \in \Places(E)$
is decomposable in $E'/E$
if $\card{\Places_\p(E')} > 1$, and indecomposable otherwise.
We also define $E_\p = \bigcup_{K \sub E} K_{\p|_K}$;
note that this field might not be complete.
For a Galois extension $E'/E$ and $\fk{P} \in \Places(E')$,
we write $\G{E'/E, \fk{P}}$ for the subgroup of $\G{E'/E}$ that fixes $\fk{P}$,
which is isomorphic to $\G{E'_\fk{P}/E_{\fk{P}|_E}}$.

Let $F$ be a non-Archimedean local field. 
We write $\cO_F$ for its ring of integers
and $\fk m_F$ for its maximal ideal.
We denote the inertia and ramification degrees of
a finite extension $E/F$ 
by $f_{E/F}$ and $e_{E/F}$ respectively.

We denote the group of $n$th roots of unity in the algebraic closure of a field of characteristic zero by $\mu_n$,
and the union of $\mu_n$ for all $n \ge 1$ by $\mu_\infty$.
For a prime $p$, we denote the union of $\mu_{p^n}$ for all $n \ge 1$ by $\mu_{p^\infty}$.

We denote the cardinality of a finite set $A$ by $\card{A}$,
and we use the same notation for the supernatural cardinalities of profinite sets.
Whenever we refer to a subgroup 
of a profinite group we implicitly require it to be closed,
and we similarly require homomorphisms between profinite groups
and modules over profinite groups to be continuous.

For elements $g, \s$ of a group, we denote $\cnj\s{g} = \s g \s\inv$.
We extend this notation to the conjugation of subgroups and conjugation of modules.
If $A$ is an abelian group we write $A[n]$ for its $n$-torsion subgroup.

It is often the case that we have a number field $K$, a Galois extension $\Om/K$,
and a place $\fk{P}$ of $\Om$ lying over a place $\p$ of $K$,
and we consider a cohomology group of $\G{\Om_\fk{P}/K_\p}$ such as
$H^i(\G{\Om_\fk{P}/K_\p}, \Om_\fk{P}^\x)$.
Suppose $\fk{P}'$ is another prime of $\Om$ lying over $\p$.
Then there is $g \in \G{\Om/K}$ such that $g\fk{P} = \fk{P}'$,
and we get a natural sequence of isomorphisms
\[
    H^i(\G{\Om_\fk{P}/K_\p}, \Om_\fk{P}^\x) \iso H^i(\G{\Om/K, \fk{P}}, \Om_\fk{P}^\x)
    \iso H^i(\cnj{g}{\G{\Om/K, \fk{P}}}, \cnj{g}{\Om_\fk{P}^\x})
    \iso H^i(\G{\Om/K, g\fk{P}}, \Om_{g\fk{P}}^\x)
    \iso H^i(\G{\Om_{\fk{P}'}/K_\p}, \Om_{\fk{P}'}^\x).
\]
In fact, the standard theory of group cohomology
shows that the composition does not depend
on the choice of $g$.
Denoting it by 
$\Phi_{\fk{P}\fk{P}'}$,
it is also not difficult to show that $\Phi_{\fk{P}'\fk{P}''} \circ \Phi_{\fk{P}\fk{P}'} = \Phi_{\fk{P}\fk{P}''}$.
We can therefore define the "cohomology group"
$H^i(\G{\Om^\p/K_\p}, \Om^{\p\x})$
by $\pr{\Op_{\fk{P} \in \Places_\p(\Om)}H^i(\G{\Om_\fk{P}/K_\p},\Om_\fk{P}^\x)}/R$
where $R$ is generated by the relations of the form $[\al]_\fk{P} - [\Phi_{\fk{P}\fk{P}'}(\al)]_{\fk{P}'}$.
This is a canonical construction depending only on $\p$,
with natural isomorphisms to each $H^i(\G{\Om_\fk{P}/K_\p}, \Om_\fk{P}^\x)$
that are compatible with $\Phi_{\fk{P}\fk{P}'}$.
Maps involving this notation
are implicitly required to be natural with respect to the isomorphisms
$\Phi_{\fk{P}\fk{P}'}$.
For instance, there is a well-defined restriction map
$\Res : H^i(\G{\Om/K}, \Om^\x) \to H^i(\G{\Om^\p/K_\p}, \Om^{\p\x})$
because the following diagram commutes:
\[
    \xymatrix{
        H^i(\G{\Om/K}, \Om^\x) \ar[d]^\Res\ar[rd]^\Res \\
        H^i(\G{\Om_\fk{P}/K_\p}, \Om_\fk{P}^\x) \ar[r]^{\Phi_{\fk{P}\fk{P}'}} & H^i(\G{\Om_{\fk{P}'}/K_\p}, \Om_{\fk{P}'}^\x).
    }
\]
Other "cohomology groups" of $\G{\Om^\p/K_\p}$ include
$H^i(\G{\Om^\p/K_\p}, \qell{{\Om^\p}})$,
$H^i(\G{\Om^\p/K_\p}, \mu_\ell)$ when $\mu_\ell \sub \Om$,
etc.
\section{Cohomologies of \texorpdfstring{$\ell$}{ell}-Sealed Fields}\label{sec:ell-sealed-cohomology}

From now on,
$\ell$ denotes a prime number
and $S$ denotes a finite set of rational places.

\begin{defn}\label{def:ell-sealed}
    Let $\ell$ be a prime number and let $\Om$ be an algebraic extension of $\Q$.
    \begin{enumerate}
        \item We say $\Om$ is $\ell$-sealed with respect to $S$
        if $\mu_\ell \sub \Om$
        and there are no $\Z/\ell\Z$-extensions of $\Om$
        where every place of $S(\Om)$ splits completely.
        \item We say $\Om$ is $\ell$-sealed if it is $\ell$-sealed with respect to some $S$.
    \end{enumerate}
\end{defn}

For a number field $K$,
consider the map $\Phi_{K,\ell,S}:\qell{K} \to \Op_{\p \in S(K)} \qell{K_\p}$.

\begin{prop}\label{prop:K^x/K^ell local surjection}
    $\Phi_{K,\ell, S}$ is surjective.
\end{prop}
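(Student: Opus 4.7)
The plan is to derive surjectivity as a direct consequence of the Approximation Theorem for the number field $K$. Given a target tuple $(\al_\p)_{\p \in S(K)}$ with $\al_\p \in K_\p^\x$ for each $\p$, the goal is to produce $x \in K^\x$ such that $x\al_\p\inv \in K_\p^{\x\ell}$ for every $\p \in S(K)$.

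The key local observation is that at each place $\p$ of $K$, the subgroup $K_\p^{\x\ell}$ is open in $K_\p^\x$, and hence contains an open neighborhood $U_\p$ of $1$. For a non-Archimedean $\p$, this follows from Hensel's lemma: for $N$ sufficiently large compared to the ramification of $\ell$ in $K_\p$, every principal unit in $1 + \fk m_{K_\p}^N$ admits an $\ell$-th root in $K_\p^\x$. For an Archimedean $\p$, one checks directly that $K_\p^{\x\ell}$ is either all of $K_\p^\x$ (when $K_\p = \C$, or $K_\p = \R$ with $\ell$ odd) or equals $\R_{>0}$ (when $K_\p = \R$ and $\ell = 2$), both of which are open neighborhoods of $1$.

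I then invoke weak approximation: $K$ is dense in $\prod_{\p \in S(K)} K_\p$ in the product topology, so there exists $x \in K$ with $x\al_\p\inv \in U_\p$ for every $\p \in S(K)$. Since each $\al_\p$ is nonzero and $U_\p \sub K_\p^\x$, the chosen $x$ is automatically nonzero, hence lies in $K^\x$. By construction $x\al_\p\inv \in K_\p^{\x\ell}$ for every $\p$, so the class of $x$ in $\qell K$ is sent by $\Phi_{K,\ell,S}$ to the prescribed tuple, proving surjectivity.

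The argument presents no real obstacle; it rests only on weak approximation and the local openness of $\ell$-th powers. Note that no $\ell$-sealed or cyclotomic hypothesis on $K$ is needed, and since $S$ is a finite set of rational places, the target group is finite and there is no room for global obstructions of Grunwald--Wang type.
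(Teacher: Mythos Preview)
Your proof is correct and follows essentially the same approach as the paper: both establish that $K_\p^{\x\ell}$ is open in $K_\p^\x$ (the paper cites \cite[Chap.~II, 5.7]{algebraic number theory Neukirch}, you spell it out via Hensel's lemma and a direct Archimedean check), and then approximate at the finitely many places in $S(K)$ to find a suitable global $x$. The only cosmetic difference is that the paper phrases the approximation step as the Chinese Remainder Theorem while you invoke weak approximation; your formulation is arguably cleaner since it uniformly covers any Archimedean places in $S(K)$.
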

\begin{proof}
    Let $(\ov{\al_\fk{P}})_{\fk{P} \in S(K)} \in \Op_{\fk{P} \in S(K)} \qell{K_\fk{P}}$,
    with $\al_\fk{P} \in K_\fk{P}^\x$
    representatives of the classes $\ov{\al_\fk{P}}$.
    By \cite[Chap. II, 5.7]{algebraic number theory Neukirch}, 
    it follows that the subsets $K_\fk{P}^{\x\ell} \sub K_\fk{P}^\x$
    are open.
    Thus, by the Chinese Remainder Theorem,
    there is $\al \in K$
    such that the quotient $\frac{\al}{\al_\fk{P}}$
    belongs to $K_\fk{P}^{\x\ell}$
    for all $\fk{P} \in S(K)$.
    This implies $\Phi_{K,\ell, S}(\al K^{\x\ell}) = (\ov{\al_\fk{P}})_{\fk{P} \in S(K)}$.
\end{proof}

We can extend the previous definition of $\Phi_{K,\ell,S}$
to general algebraic extensions $\Om/\Q$,
defining $\Phi_{\Om,\ell,S}$
as the composition 
\[\qell{\Om} \to \dirlim_{K \sub \Om}\qell{K} \to \dirlim_{K \sub \Om}\Op_{\p \in S(K)}\qell{K_\p}\]
where the direct limits are over number fields $K$ contained in $\Om$.
From \cref{prop:K^x/K^ell local surjection}
it is immediate that $\Phi_{\Om,\ell,S}$ is surjective.

\begin{prop}\label{prop:ell-sealed alt def}
    $\Om$ is $\ell$-sealed with respect to $S$
    if and only if $\mu_\ell \sub \Om$ and $\Phi_{\Om,\ell, S}$ is an injection (and thus an isomorphism).
\end{prop}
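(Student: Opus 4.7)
The plan is to unwind both sides of the stated equivalence via Kummer theory, which applies because $\mu_\ell \sub \Om$. Under Kummer theory, every $\Z/\ell\Z$-extension of $\Om$ has the form $\Om(\al^{1/\ell})/\Om$ for some $\al \in \Om^\x$ with $\ov\al$ nonzero in $\qell\Om$, and conversely each nonzero $\ov\al \in \qell\Om$ yields such a $\Z/\ell\Z$-extension (depending only on the $\F_\ell^\x$-orbit of $\ov\al$).

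The main step is a local splitting criterion: for every $\fk P \in \Places(\Om)$, the place $\fk P$ splits completely in $\Om(\al^{1/\ell})/\Om$ if and only if the image of $\ov\al$ in $\qell{\Om_\fk P}$ is zero. I would establish this by reducing to finite subfields: for any number field $K \sub \Om$ containing $\mu_\ell$ and $\al$, setting $\p := \fk P|_K$, the classical fact over number fields is that $\p$ splits completely in the degree-$\ell$ Kummer extension $K(\al^{1/\ell})/K$ if and only if $\al \in K_\p^{\x\ell}$. Taking the direct limit over such $K \sub \Om$, and using $\Om_\fk P = \bigcup_K K_{\fk P|_K}$, lifts the criterion to $\Om$.

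Granted the local criterion, the equivalence is immediate. A nonzero $\ov\al \in \qell\Om$ lies in $\ker \Phi_{\Om,\ell,S}$ if and only if its image vanishes in $\qell{\Om_\fk P}$ for every $\fk P \in S(\Om)$, which by the criterion is equivalent to every place of $S(\Om)$ splitting completely in the $\Z/\ell\Z$-extension $\Om(\al^{1/\ell})/\Om$. Hence $\Phi_{\Om,\ell,S}$ is injective if and only if no such extension exists, i.e., if and only if $\Om$ is $\ell$-sealed with respect to $S$. Surjectivity was already noted following \cref{prop:K^x/K^ell local surjection}, so injectivity upgrades to an isomorphism.

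The principal technical subtlety is the local splitting criterion in the infinite, non-complete setting: one must verify that complete splitting at $\fk P$ in $\Om(\al^{1/\ell})/\Om$ is faithfully witnessed by complete splitting of $\p = \fk P|_K$ in $K(\al^{1/\ell})/K$ for every sufficiently large number field $K \sub \Om$. This is routine, but is the one place where the non-completeness of $\Om_\fk P$ must be handled with care.
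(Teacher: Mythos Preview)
Your approach is the same as the paper's in outline, but you have glossed over the one nontrivial step. When you assert that a nonzero $\ov\al \in \qell\Om$ lies in $\ker\Phi_{\Om,\ell,S}$ if and only if its image vanishes in $\qell{\Om_\fk P}$ for every $\fk P \in S(\Om)$, the backward implication is not immediate from the definition. The map $\Phi_{\Om,\ell,S}$ lands in the direct limit $\dirlim_{K\sub\Om}\bigoplus_{\p\in S(K)}\qell{K_\p}$, so $\ov\al\in\ker\Phi_{\Om,\ell,S}$ means there is a \emph{single} number field $L\sub\Om$ with $\al\in L_\p^{\x\ell}$ for all $\p\in S(L)$; by contrast, vanishing at each $\fk P\in S(\Om)$ separately only gives, for each $\fk P$, some $L_\fk P$ with $\al\in (L_\fk P)_{\fk P|_{L_\fk P}}^{\x\ell}$. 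Upgrading the quantifier order from $\forall\fk P\,\exists L$ to $\exists L\,\forall\fk P$ is exactly what is needed for the direction ``$\Phi$ injective $\Rightarrow$ $\ell$-sealed'', and it requires a compactness argument on the profinite space $S(\Om)$: the sets $U_L=\{\fk P\in S(\Om):\al\text{ is an }\ell\text{th power in }L_{\fk P|_L}\}$ form an open cover, and $U_{L_1}\cup U_{L_2}\sub U_{L_1L_2}$, so compactness produces a single $L$ that works. The paper carries this out explicitly.

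The part you flag as the ``principal technical subtlety''---the local splitting criterion at a fixed place $\fk P$---is in fact the routine part; it is the uniformization over all of $S(\Om)$ that carries the weight. The converse direction ($\ell$-sealed $\Rightarrow$ $\Phi$ injective) goes through as you describe, since it only uses the easy implication $\ov\al\in\ker\Phi\Rightarrow$ images vanish at all $\fk P$.
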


\begin{proof}
    Suppose that $\mu_\ell \sub \Om$ and $\Phi_{\Om,\ell, S}$ is an injection,
    and assume for the sake of contradiction
    that $\Om$ has a $\Z/\ell\Z$-extension 
    in which every place of $S(\Om)$ splits completely.
    This extension has the form $\Om(\al^\frac1\ell)/\Om$
    for some $\al \in \Om^\x \setminus \Om^{\x\ell}$.
    It follows that 
    for every $\fk{P} \in S(\Om)$,
    $\al$ is an $\ell$th power in $\Om_\fk{P}$,
    meaning that there is a number field $L \sub \Om$
    such that $\al$ is an $\ell$th power in $L_{\fk{P}|_L}$.
    Thus the sets $U_L = \{ \fk{P} \in S(\Om) \mid \al \text{ is an } \ell\text{th power in } L_{\fk{P}|_L} \}$
    form an open cover of the compact space $S(\Om)$.
    Since $U_{L_1} \cup U_{L_2} \sub U_{L_1L_2}$
    it follows there is a number field $L \sub \Om$
    such that $\al$ is an $\ell$th power in $L_{\fk{P}|_L}$
    for every $\fk{P} \in S(\Om)$.
    However, this implies $\al\Om^{\x\ell} \in \ker(\Phi_{\Om, \ell, S})$,
    so that $\al \in \Om^{\x\ell}$, a contradiction.
    
    Conversely, suppose $\Om$ is $\ell$-sealed with respect to $S$.
    Let $\al \in \Om^\x$ and assume $\al\Om^{\x\ell} \in \ker(\Phi_{\Om, \ell, S})$.
    Then there is a number field $L \sub \Om$
    containing $\mu_\ell$ and $\al$, such that $\al$ is an $\ell$th power in $L_{\p}$ for every $\p \in S(L)$.
    It follows that every place of $S(L)$ splits completely in $L(\al^\frac1\ell)/L$,
    and thus every place of $S(\Om)$ splits completely in $\Om(\al^\frac1\ell)/\Om$.
    This implies $\Om(\al^\frac1\ell) = \Om$ so that $\al \in \Om^{\x\ell}$.
\end{proof}

\begin{ex}\label{ex:ell-sealed-examples}
    Let $\Om$ be an algebraic extension of $\Q$ and $\ell$ a prime number.
    Then:
    \begin{enumerate}
        \item If $\Om$ contains $\mu_\ell$ and has no $\Z/\ell\Z$-extensions,
        then $\Om$ is $\ell$-sealed with respect to $S = \emptyset$.
        \item If $\Om$ has no nontrivial tamely ramified extensions,
        then $\Om$ is $\ell$-sealed with respect to $S = \{\ell\}$.
    \end{enumerate}
    In particular, for a number field $K$,
    both $\tm K$ and the maximal pro-$\ell$ extension of $K(\mu_\ell)$ are $\ell$-sealed.
\end{ex}

\begin{proof}
    The first part is trivial.
    For the second part,
    suppose $\al \in \Om^\x$ satisfies $\al\Om^{\x\ell} \in \ker(\Phi_{\Om, \ell, \{\ell\}})$.
    Then there is a number field $K \sub \Om$
    such that $\al \in K^\x$, $\mu_\ell \sub K$, and $\al \in K_\p^{\x\ell}$ 
    for all $\p \in \Places_\ell(K)$.
    Let $L = K(\al^\frac1\ell)$,
    and note that every place of $\Places_\ell(K)$
    splits completely in $L$.
    Since $L/K$ is a Galois extension of degree $1$ or $\ell$,
    it can have no wild ramification at places not lying over $\ell$,
    so it is tamely ramified.
    Tame ramification is preserved under base change,
    so $\Om(\al^\frac1\ell)/\Om$ is also tamely ramified,
    implying that $\al \in \Om^{\x\ell}$
    by the assumption on $\Om$.
    Thus, $\ker(\Phi_{\Om, \ell, \{\ell\}})$ is trivial.
\end{proof}

\begin{lem}\label{lem:O/ell=0 outside S}
    Suppose $\Om$ is $\ell$-sealed with respect to $S$.
    Then for every $\fk{P} \in \Places(\Om) \setminus S(\Om)$ we have $\perfect{\Om_\fk{P}}$.
\end{lem}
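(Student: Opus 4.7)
The plan is to construct, for an arbitrary $\al \in \Om_\fk{P}^\x$, a global element $\be \in \Om^\x$ that differs from $\al$ by an $\ell$-th power at $\fk{P}$ but is an $\ell$-th power at every place in $S(\Om)$. Injectivity of $\Phi_{\Om,\ell,S}$, supplied by \cref{prop:ell-sealed alt def}, will then force $\be \in \Om^{\x\ell}$, and combined with local agreement at $\fk{P}$ this will yield $\al \in \Om_\fk{P}^{\x\ell}$.

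To build $\be$, I would first choose a number field $K \sub \Om$ large enough that $\mu_\ell \sub K$ and $\al \in K_{\p_0}^\x$, where $\p_0 := \fk{P}|_K$; this is possible because $\Om_\fk{P} = \bigcup_{K \sub \Om} K_{\fk{P}|_K}$ by definition, and $\mu_\ell$ is finite. Let $p_0 := \fk{P}|_\Q$. The hypothesis $\fk{P} \notin S(\Om)$ is precisely the statement $p_0 \notin S$, so $S' := S \cup \{p_0\}$ is still a finite set of rational places, and $\p_0 \in \{p_0\}(K) \sub S'(K)$ is disjoint from $S(K)$. Applying \cref{prop:K^x/K^ell local surjection} to $K$ and $S'$ produces $\be \in K^\x$ whose image under $\Phi_{K,\ell,S'}$ is $\al K_{\p_0}^{\x\ell}$ at $\p_0$ and trivial at every other place of $S'(K)$; in particular $\be \in K_\q^{\x\ell}$ for each $\q \in S(K)$, and $\be\al^{-1} \in K_{\p_0}^{\x\ell}$.

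Viewing $\be$ as an element of $\Om^\x$, for every $\fk{Q} \in S(\Om)$ one has $\fk{Q}|_K \in S(K)$ and $K_{\fk{Q}|_K} \sub \Om_\fk{Q}$, so $\be \in \Om_\fk{Q}^{\x\ell}$. Hence $\be\Om^{\x\ell}$ lies in $\ker(\Phi_{\Om,\ell,S})$, which is trivial by \cref{prop:ell-sealed alt def}, giving $\be \in \Om^{\x\ell}$. Since also $\be\al^{-1} \in K_{\p_0}^{\x\ell} \sub \Om_\fk{P}^{\x\ell}$ by construction, we conclude $\al \in \Om_\fk{P}^{\x\ell}$.

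The only delicate point is ensuring that the prescribed local behaviour at $\p_0$ is compatible with the triviality constraints at the places in $S(K)$; this is precisely what the hypothesis $\fk{P} \notin S(\Om)$ secures, by guaranteeing $p_0 \notin S$ and hence $\p_0 \notin S(K)$. Beyond that the argument is a direct local-to-global interplay between the surjectivity of \cref{prop:K^x/K^ell local surjection} and the injectivity from \cref{prop:ell-sealed alt def}, and I do not anticipate any further obstacle.
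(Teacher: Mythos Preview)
Your proposal is correct and follows essentially the same route as the paper: choose a number field $K \sub \Om$ realizing the local element, invoke \cref{prop:K^x/K^ell local surjection} for the enlarged set $S \cup \{\fk{P}|_\Q\}$ to produce a global element matching the given class at $\fk{P}|_K$ while being an $\ell$th power at every place of $S(K)$, and then apply the injectivity from \cref{prop:ell-sealed alt def} to conclude. The paper's proof is terser but identical in substance.
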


\begin{proof}
    Let $\al_\fk{P} \in \Om_\fk{P}^\x$.
    Let $K \sub \Om$ be a number field containing $\mu_\ell$
    such that $\al_\fk{P} \in K_{\fk{P}|_K}$.
    By \cref{prop:K^x/K^ell local surjection}
    for the set $S \cup \{\fk{P}|_\Q\}$,
    there is $\al \in K^\x$
    whose images in $K_\p, \p \in S(K)$
    are $\ell$th powers,
    while its image in $K_{\fk{P}|_K}$
    lies in $\al_\fk{P} \cdot K_{\fk{P}|_K}^{\x\ell}$.
    It follows by \cref{prop:ell-sealed alt def} 
    that $\al \in \Om^{\x\ell}$ and therefore
    $\al_\fk{P}\in \Om_\fk{P}^{\x\ell}$.
\end{proof}

\begin{lem}\label{lem:Xi/ell=0 implies ell^infty | [Xi:F]}
    Let $F$ be a non-Archimedean local field of characteristic zero,
    and let $\Xi$ be an algebraic extension of $F$
    containing $\mu_\ell$
    and satisfying $\perfect{\Xi}$.
    Then $\ell^\infty \divides [\Xi:F]$.
\end{lem}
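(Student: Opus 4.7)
The plan is to show first that the maximal pro-$\ell$ extension of $\Xi$ inside an algebraic closure $\ov F$ is $\Xi$ itself, and then to exploit the fact that the absolute Galois group of a non-Archimedean local field has supernatural order divisible by $\ell^\infty$.

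For the first step, set $G_\Xi := \Gal(\ov F/\Xi)$ and apply Kummer theory. Since $\mu_\ell \sub \Xi$, Hilbert 90 gives $H^1(G_\Xi, \mu_\ell) \iso \qell{\Xi}$, while the trivial $G_\Xi$-action on $\mu_\ell$ further identifies this with the group of continuous homomorphisms $G_\Xi \to \Z/\ell\Z$. By hypothesis $\qell{\Xi}$ is trivial, so $G_\Xi$ admits no continuous surjection onto $\Z/\ell\Z$. Since any nontrivial pro-$\ell$ group has $\Z/\ell\Z$ as a quotient (via its Frattini quotient), the maximal pro-$\ell$ quotient of $G_\Xi$ is trivial; equivalently, the $\ell$-part of the supernatural cardinality $\card{G_\Xi}$ is $1$.

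For the second step, I would note that $F$ admits an unramified cyclic extension of every degree $\ell^n$, lifted from extensions of its finite residue field, so the $\ell$-part of $\card{\Gal(\ov F/F)}$ is $\ell^\infty$. Combining this with the multiplicativity $\card{\Gal(\ov F/F)} = \card{G_\Xi} \cdot [\Gal(\ov F/F) : G_\Xi]$ of supernatural cardinalities, and noting that the supernatural index on the right equals $[\Xi:F]$, the $\ell$-part of $[\Xi:F]$ must absorb the full $\ell^\infty$ factor, yielding $\ell^\infty \divides [\Xi:F]$.

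The steps are short, and there is no substantial obstacle. The only point that demands a brief verification is that Kummer theory applies even when $\Xi/F$ is infinite and possibly non-Galois, which is handled automatically by working with the absolute Galois group $G_\Xi$ itself rather than with $\Gal(\Xi/F)$; the identification $H^1(G_\Xi,\mu_\ell) \iso \qell{\Xi}$ is valid for any field containing $\mu_\ell$.
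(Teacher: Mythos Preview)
Your Kummer-theoretic reduction is clean and correct up to the conclusion that $G_\Xi$ has no continuous surjection onto $\Z/\ell\Z$, hence trivial maximal pro-$\ell$ quotient. The gap is the word \emph{equivalently}: a profinite group can have trivial maximal pro-$\ell$ quotient while $\ell$ still divides its supernatural order. Already $S_3$ with $\ell=3$ shows this (its only proper nontrivial quotient is $\Z/2\Z$, yet $3\divides 6$), and the phenomenon persists for prosolvable groups, so the special shape of $G_\Xi$ as a local absolute Galois group does not rescue the step as written. Without $\ell\ndiv\card{G_\Xi}$, the multiplicativity argument in your second paragraph does not go through.

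The fix is cheap and stays within your framework: rather than computing $\card{G_\Xi}$, take the surjection $\pi\colon G_F\twoheadrightarrow\Z_\ell$ furnished by the unramified $\Z_\ell$-extension of $F$. The composite $G_\Xi\hookrightarrow G_F\to\Z_\ell$ has pro-$\ell$ image, hence trivial image by what you established in the first step; thus $G_\Xi\sub\ker\pi$, so $\Xi$ contains the unramified $\Z_\ell$-extension of $F$, and $\ell^\infty\divides[\Xi:F]$ follows immediately. For comparison, the paper avoids cohomology entirely: assuming $\ell^\infty\ndiv[\Xi:F]$, it passes to a finite $F'\sub\Xi$ with $\mu_\ell\sub F'$ and $\ell\ndiv[\Xi:F']$, then produces by a pigeonhole count on $(\cO_{F'}/\fk m_{F'}^n)^\x$ an explicit $\al\in\cO_{F'}$ that is not an $\ell$th power in $F'$ but must become one in $\Xi$, forcing $\ell\divides[\Xi:F']$.
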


\begin{proof}
    Assume, for the sake of contradiction,
    that $\ell^\infty \ndiv [\Xi:F]$.
    Then there exists a finite subextension $F'/F$ of $\Xi/F$
    such that $\ell \ndiv [\Xi:F']$.
    We may assume without loss of generality that $\mu_\ell \sub F'$.
    Let $n$ be an integer
    such that the map $\mu_\ell \to \cO_{F'}/\fk m_{F'}^n\cO_{F'}$
    is injective.

    The $\ell$-power map $(\cO_{F'}/\fk m_{F'}^n\cO_{F'})^\x \arw\ell (\cO_{F'}/\fk m_{F'}^n\cO_{F'})^\x$
    is a homomorphism between finite groups of the same size,
    and has kernel of size $\ell$.
    In particular it is not surjective,
    and there is $\al \in \cO_{F'}$
    whose image in $\cO_{F'}/\fk m_{F'}^n\cO_{F'}$
    is not an $\ell$th power.
    Therefore $\al$ is not an $\ell$th power in $F'$,
    and $F'(\al^\frac1\ell)/F'$ is a $\Z/\ell\Z$-extension.
    However, from $\perfect{\Xi}$ we find $\al^\frac1\ell \in \Xi$.
    This implies $\ell \divides [\Xi : F']$,
    contradicting the assumption.
\end{proof}

\begin{lem}\label{lem:O/ell cohomology local decomposition}
    Let $K$ be a number field and let $\Om/K$ be a Galois extension which is $\ell$-sealed with respect to $S$.
    Then for every $i \ge 0$ the restriction maps induce an isomorphism
    \[
        H^i\pr{\G{\Om/K}, \qell{\Om}} \to \Op_{\p \in S(K)}H^i\pr{\G{\Om^\p/K_\p}, \qell{{\Om^\p}}}.
    \]
\end{lem}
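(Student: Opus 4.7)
\emph{Proof sketch.}
By \cref{prop:ell-sealed alt def}, the map $\Phi_{\Om, \ell, S}$ is an isomorphism of $\G{\Om/K}$-modules
$\qell\Om \iso \Op_{\fk P \in S(\Om)} \qell{\Om_\fk P}$.
The plan is to take $\G{\Om/K}$-cohomology of this isomorphism, partition the direct sum on the right by restriction to $K$, and apply Shapiro's lemma to each single-orbit summand, identifying the result with the semi-local cohomology defined in \cref{sec:notation}.

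First, observe that $\qell\Om$ is a discrete $\G{\Om/K}$-module: any class is represented by an element $\al$ lying in some finite Galois subextension $L/K$ of $\Om/K$, and $\G{\Om/L}$ fixes $\al$ pointwise. Hence $\Op_{\fk P \in S(\Om)} \qell{\Om_\fk P}$ is discrete as well (via $\Phi$), and applying $H^i(\G{\Om/K},-)$ to $\Phi$ yields an isomorphism on cohomology. Because $S(K)$ is finite, cohomology commutes with the outer direct sum in the decomposition $S(\Om) = \bigsqcup_{\p \in S(K)} \Places_\p(\Om)$, giving
\[
    H^i\pr{\G{\Om/K}, \Op_{\fk P \in S(\Om)} \qell{\Om_\fk P}} = \Op_{\p \in S(K)} H^i\pr{\G{\Om/K}, \Op_{\fk P \in \Places_\p(\Om)} \qell{\Om_\fk P}}.
\]

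For each $\p \in S(K)$ the action of $\G{\Om/K}$ on $\Places_\p(\Om)$ is transitive with stabilizer $\G{\Om/K, \fk P_0}$ at a chosen $\fk P_0 \in \Places_\p(\Om)$, so the inner direct sum is the module induced from the decomposition group. I would establish the Shapiro-type isomorphism
\[
    H^i\pr{\G{\Om/K}, \Op_{\fk P \in \Places_\p(\Om)} \qell{\Om_\fk P}} \iso H^i\pr{\G{\Om/K, \fk P_0}, \qell{\Om_{\fk P_0}}} = H^i\pr{\G{\Om^\p/K_\p}, \qell{\Om^\p}}
\]
by reducing to finite Galois subextensions $L/K$, where the decomposition group has finite index and the classical Shapiro lemma applies, and then passing to the directed colimit $\dirlim_L$ (using that cohomology of discrete modules over a profinite group commutes with filtered colimits). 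Naturality of Shapiro ensures the resulting composite isomorphism coincides with the sum of restriction maps, as asserted.

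The main obstacle is the profinite Shapiro step: one must verify that $\dirlim_L \Op_{\fk P' \in \Places_\p(L)} \qell{L_{\fk P'}}$ equals $\Op_{\fk P \in \Places_\p(\Om)} \qell{\Om_\fk P}$ as a $\G{\Om/K}$-module, that the finite-level Shapiro isomorphisms form a compatible directed system under the inflation maps, and that the colimit commutes with $H^i$. These points are formal but require careful bookkeeping, as they are essentially the only substantive content beyond the module-theoretic input provided by \cref{prop:ell-sealed alt def}.
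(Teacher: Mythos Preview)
Your overall strategy --- identify $\qell{\Om}$ with a ``semi-local'' module via $\Phi_{\Om,\ell,S}$, split over $S(K)$, reduce to finite Galois sublevels, apply finite-index Shapiro, and pass to the colimit --- is exactly the paper's argument. But there is a genuine misidentification in your first line that, taken literally, makes the stated ``main obstacle'' both unnecessary and false.

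The codomain of $\Phi_{\Om,\ell,S}$, as defined immediately before \cref{prop:ell-sealed alt def}, is $\dirlim_{L\sub\Om}\Op_{\q\in S(L)}\qell{L_\q}$, \emph{not} $\Op_{\fk P\in S(\Om)}\qell{\Om_\fk P}$. These two modules are in general different, and the identification you propose to verify,
\[
\dirlim_L\Op_{\q\in\Places_\p(L)}\qell{L_\q}\;\iso\;\Op_{\fk P\in\Places_\p(\Om)}\qell{\Om_\fk P},
\]
fails. For a concrete counterexample take $K=\Q$, $\ell=2$, $S=\{2\}$, $\Om=\tm\Q$. The class of $2\in\qell{\Q_2}$ at level $L=\Q$ is nonzero in the colimit (since $\sqrt 2\notin\tm{\Q_2}$), yet its image in $\prod_{\fk P\in\Places_2(\Om)}\qell{\Om_\fk P}$ is nonzero at every one of the infinitely many places $\fk P$, hence does not land in the direct sum. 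So the single-orbit summand is not the naive induced module $\Op_{\fk P}\qell{\Om_\fk P}$, and your ``formal bookkeeping'' step cannot be completed.

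The fix is simply to drop the direct sum over $S(\Om)$ altogether and work directly with the colimit description $\qell{\Om}\iso\dirlim_L\Op_{\q\in S(L)}\qell{L_\q}$ that \cref{prop:ell-sealed alt def} actually provides. Then your own plan goes through verbatim: the standard continuity result for profinite cohomology applied to the system $\bigl(\G{L/K},\ \Op_{\q\in S(L)}\qell{L_\q}\bigr)$ gives the first step, one splits the finite sum over $\p\in S(K)$, applies classical Shapiro at each finite level, and invokes continuity once more on the local side. This is line-for-line the paper's proof.
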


\begin{proof}
    \begin{align*}
        &H^i(\G{\Om/K}, \qell{\Om}) \\
        &\iso \dirlim_{L \sub \Om\text{, }L/K \text{ Galois}} H^i\pr{\G{L/K}, \Op_{\fk{P} \in S(L)}\qell{L_\fk{P}}}\\
        &\iso \Op_{\p \in S(K)} \dirlim_{L \sub \Om\text{, }L/K \text{ Galois}} H^i\pr{\G{L/K}, \Op_{\fk{P} \in \Places_\p(L)}\qell{L_\fk{P}}}\\
        &\iso \Op_{\p \in S(K)} \dirlim_{L \sub \Om\text{, }L/K \text{ Galois}} H^i\pr{\G{L^\p/K_\p}, \qell{{L^\p}}}\\
        &\iso \Op_{\p \in S(K)} H^i\pr{\G{\Om^\p/K_\p}, \qell{{\Om^\p}}}\\
    \end{align*}
    where the first isomorphism follows from \cite[1.5.1]{cohomology of number fields}
    with inverse systems $\G{\Om/K} \iso \invlim_{L \sub \Om\text{, }L/K\text{ Galois}} \G{L/K}$
    and $\qell{\Om} \iso \invlim_{L\sub \Om\text{, }L/K\text{ Galois}}\Op_{\fk{P} \in S(L)}\qell{L_\fk{P}}$,
    the second isomorphism is obvious,
    the third isomorphism follows from a $\G{L^\p/K_\p}$-version of Shapiro's lemma,
    and the fourth isomorphism follows from a $\G{L^\p/K_\p}$-version of \cite[1.5.1]{cohomology of number fields}.
\end{proof}

\section{Brauer Exact Sequence for \texorpdfstring{$\ell$}{ell}-sealed extensions}\label{sec:brauer-sequence}

In this section we prove an analogue of the
Brauer exact sequence
for the case of $\ell$-sealed extensions.
Namely, we prove that the sequence
$0 \to H^2(\G{\Om/K}, \mu_\ell) \to \Op_{\p \in \Places(K)} H^2(\G{\Om^\p/K_\p}, \mu_\ell) \to \Z/\ell\Z$
is exact, where $\Om$ is an $\ell$-sealed Galois extension of $K$.

\begin{lem}\label{lem:five-term-exact-sequence}
    Let $K$ be a field of characteristic zero,
    and let $\Om/K$ be a Galois extension containing $\mu_\ell$.
    Then there is a five-term exact sequence
    \[
        \qell{K} \to H^0(\G{\Om/K}, \qell{\Om}) \to H^2(\G{\Om/K}, \mu_\ell) \to \Br(\Om/K)[\ell] \to H^1(\G{\Om/K}, \qell{\Om})
    \]
    Further, if $K$ is an algebraic extension of $\Q$ and $\p \in \Places(K)$,
    then this sequence is natural with respect to restrictions,
    in the sense that the restriction maps induce a commutative diagram
    \[
        \xymatrix@C-1pc{
            \qell{K} \ar[r]\ar[d]& H^0(\G{\Om/K}, \qell{\Om}) \ar[r]\ar[d]& H^2(\G{\Om/K}, \mu_\ell) \ar[r]\ar[d]& \Br(\Om/K)[\ell] \ar[r]\ar[d]& H^1(\G{\Om/K}, \qell{\Om})\ar[d]\\
            \qell{K_\p} \ar[r]& H^0(\G{\Om^\p/K_\p}, \qell{{\Om^\p}}) \ar[r]& H^2(\G{\Om^\p/K_\p}, \mu_\ell) \ar[r]& \Br(\Om^\p/K_\p)[\ell] \ar[r]& H^1(\G{\Om^\p/K_\p}, \qell{{\Om^\p}}).
        }
    \]
\end{lem}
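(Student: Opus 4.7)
The plan is to derive the five-term sequence from Hilbert 90 and a pair of short exact sequences coming from the $\ell$-power map, and then check naturality in a place. Since $\mu_\ell \sub \Om$, the $\ell$-power map on $\Om^\x$ has kernel $\mu_\ell$, image $\Om^{\x\ell}$, and cokernel $\qell\Om$, yielding two short exact sequences of $\G{\Om/K}$-modules:
\[
    (\text{I})\quad 0 \to \mu_\ell \to \Om^\x \to \Om^{\x\ell} \to 0, \qquad (\text{II})\quad 0 \to \Om^{\x\ell} \to \Om^\x \to \qell\Om \to 0,
\]
where the middle arrow in (I) is $x\mapsto x^\ell$. Combining the long exact cohomology sequences with Hilbert 90 (so $H^1(\G{\Om/K},\Om^\x)=0$), sequence (I) gives an isomorphism $H^1(\Om^{\x\ell})\iso \ker\!\pr{H^2(\mu_\ell)\to\Br(\Om/K)}$, and sequence (II) gives a surjection $H^0(\qell\Om)\to H^1(\Om^{\x\ell})$ with kernel the image of $K^\x$, together with an isomorphism $H^1(\qell\Om)\iso\ker\!\pr{H^2(\Om^{\x\ell})\to\Br(\Om/K)}$.

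Splicing these yields the desired sequence. The first map $\qell K \to H^0(\qell\Om)$ is induced by $K\hookrightarrow\Om$. The second $H^0(\qell\Om)\to H^2(\mu_\ell)$ is the composite of the connecting map from (II) with the one from (I). The third $H^2(\mu_\ell)\to\Br(\Om/K)[\ell]$ is induced by $\mu_\ell\hookrightarrow\Om^\x$, with image in the $\ell$-torsion because the source is. Finally, the fourth $\Br(\Om/K)[\ell]\to H^1(\qell\Om)$ is the restriction of $\Br(\Om/K)\to H^2(\Om^{\x\ell})$ from (I) to $\ell$-torsion: the composite $H^2(\Om^\x)\to H^2(\Om^{\x\ell})\to H^2(\Om^\x)$ agrees with multiplication by $\ell$, since it is induced by the $\ell$-power map on $\Om^\x$, so it vanishes on $\Br(\Om/K)[\ell]$ and the image lies in $H^1(\qell\Om)\sub H^2(\Om^{\x\ell})$.

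Exactness at each of the three interior terms then follows by chasing through the two long exact sequences. At $H^0(\qell\Om)$, the kernel of the connecting map to $H^1(\Om^{\x\ell})$ is the image of $H^0(\Om^\x)=K^\x$, which matches the image of $\qell K$. At $H^2(\mu_\ell)$, the image from $H^0(\qell\Om)$ is the image of $H^1(\Om^{\x\ell})$ (using surjectivity from (II)), which equals the kernel of $H^2(\mu_\ell)\to\Br(\Om/K)$, matching the kernel of $H^2(\mu_\ell)\to\Br(\Om/K)[\ell]$. At $\Br(\Om/K)[\ell]$, both the image from (I) and the kernel of the restricted map to $H^1(\qell\Om)$ equal $\Br(\Om/K)[\ell]\cap\ker\!\pr{\Br(\Om/K)\to H^2(\Om^{\x\ell})}$.

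For naturality in a place $\p\in\Places(K)$, the sequences (I) and (II) pull back termwise to their semilocal analogues with $\Om^\p$ in place of $\Om$ (in the sense of the cohomology group conventions of \cref{sec:notation}), restriction on cohomology is compatible with each induced and connecting map, and the commutative diagram of the statement follows from the naturality of long exact sequences with respect to morphisms of short exact sequences. I expect the main bookkeeping difficulty to be verifying that the composite of the two connecting maps in the definition of $H^0(\qell\Om)\to H^2(\mu_\ell)$ slots correctly between the adjacent maps — but this is purely formal once the $\ell$-power map on $\Om^\x$ has been split into (I) and (II).
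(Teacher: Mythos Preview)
Your proposal is correct and follows essentially the same approach as the paper: both split the $\ell$-power map on $\Om^\x$ into the two short exact sequences $0\to\mu_\ell\to\Om^\x\to\Om^{\x\ell}\to 0$ and $0\to\Om^{\x\ell}\to\Om^\x\to\qell\Om\to 0$, invoke Hilbert~90, define the middle map as the composite of the two connecting homomorphisms, define the last map via the observation that $H^2(\Om^\x)\to H^2(\Om^{\x\ell})\to H^2(\Om^\x)$ is multiplication by $\ell$, and deduce naturality from the functoriality of the long exact sequences. Your exactness checks at the three interior terms spell out what the paper leaves as ``straightforward to verify.''
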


\begin{proof}
    One way to show this is to consider the exact sequence $0 \to \mu_\ell \to \Om^\x \arw\ell \Om^\x \to \qell{\Om} \to 0$,
    which induces a spectral sequence converging to zero
    whose first page is as follows:
    \[
        \xymatrix{
            \dots & \dots & \dots & \dots \\
            H^2(\G{\Om/K},\mu_\ell) \ar[r] & \Br(\Om/K) \ar[r]^\ell & \Br(\Om/K) \ar[r] & H^2(\G{\Om/K}, \qell{\Om}) \\
            H^1(\G{\Om/K},\mu_\ell) \ar[r] & 0 \ar[r] & 0 \ar[r] & H^1(\G{\Om/K}, \qell{\Om}) \\
            H^0(\G{\Om/K},\mu_\ell) \ar[r] & K^\x \ar[r]^\ell & K^\x \ar[r] & H^0(\G{\Om/K}, \qell{\Om}). \\
        }
    \]
    For simplicity, we provide the elementary version of this proof.
    The short exact sequence
    $0 \to \mu_\ell \to \Om^\x \arw\ell \Om^{\x\ell} \to 0$
    induces the exact cohomology sequence
    \begin{equation}
        0 \to H^1(\G{\Om/K}, \Om^{\x\ell}) \arw\de H^2(\G{\Om/K}, \mu_\ell) \to \Br(\Om/K) \to H^2(\G{\Om/K}, \Om^{\x\ell}) \label{eq:1}
    \end{equation}
    where we used Hilbert's Theorem 90 for $H^1(\G{\Om/K}, \Om^\x) = 0$.
    Likewise, the short exact sequence
    $0 \to \Om^{\x\ell} \to \Om^\x \to \qell{\Om} \to 0$
    induces the exact sequences
    \begin{equation}
        0 \to \Om^{\x\ell} \cap K^\x \to K^\x \to H^0(\G{\Om/K}, \qell{\Om}) \arw\de H^1(\G{\Om/K}, \Om^{\x\ell}) \to 0 \label{eq:2}
    \end{equation} 
    and 
    \begin{equation}
        0 \to H^1(\G{\Om/K}, \qell{\Om}) \arw\de H^2(\G{\Om/K}, \Om^{\x\ell}) \to \Br(\Om/K) \to H^2(\G{\Om/K}, \qell{\Om}). \label{eq:3}
    \end{equation}
    We then define a map $H^0(\G{\Om/K}, \qell{\Om}) \to H^2(\G{\Om/K}, \mu_\ell)$
    as the composition
    \[
        H^0(\G{\Om/K}, \qell{\Om}) \arw\de H^1(\G{\Om/K}, \Om^{\x\ell}) \arw\de H^2(\G{\Om/K}, \mu_\ell)
    \]
    where the first map is from \eqref{eq:2} and the second is from \eqref{eq:1}.
    We also define a map $\Br(\Om/K)[\ell] \to H^1(\G{\Om/K}, \qell{\Om})$,
    by mapping an element $\al \in \Br(\Om/K)[\ell]$ to its image $\be \in H^2(\G{\Om/K}, \Om^{\x\ell})$ via \eqref{eq:1},
    noting that a subsequent mapping of $\be$ into $\Br(\Om/K)$ via \eqref{eq:3} 
    would equal $\ell\al = 0$
    (because the composition $\Om^\x \arw\ell \Om^{\x\ell} \to \Om^\x$
    is just multiplication by $\ell$),
    and concluding that $\be$ has a unique preimage in $H^1(\G{\Om/K}, \qell{\Om})$ by exactness of the sequence \eqref{eq:3}.
    It is then straightforward to verify that these maps
    fit into the desired exact sequence.

    The second part follows from the naturality of the above construction.

\end{proof}

\begin{cor}\label{cor:H^2(mu_ell) equals Brauer in local non-ell case}
    Let $F$ be a field of characteristic zero
    and let $\Xi/F$ be a Galois extension
    such that $\mu_\ell \sub \Xi$ and $\perfect{\Xi}$.
    Then the natural map $H^2(\G{\Xi/F}, \mu_\ell) \to \Br(\Xi/F)[\ell]$
    is an isomorphism.
\end{cor}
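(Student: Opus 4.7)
The plan is to apply the five-term exact sequence from \cref{lem:five-term-exact-sequence} directly. Taking $K = F$ and $\Om = \Xi$ in that lemma (noting $\mu_\ell \sub \Xi$ is assumed), we obtain the exact sequence
\[
    \qell{F} \to H^0(\G{\Xi/F}, \qell{\Xi}) \to H^2(\G{\Xi/F}, \mu_\ell) \to \Br(\Xi/F)[\ell] \to H^1(\G{\Xi/F}, \qell{\Xi}).
\]

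The key observation is that the hypothesis $\perfect{\Xi}$ means $\qell{\Xi} = 0$ as a $\G{\Xi/F}$-module. Consequently all cohomology groups with coefficients in $\qell{\Xi}$ vanish, and in particular both $H^0(\G{\Xi/F}, \qell{\Xi})$ and $H^1(\G{\Xi/F}, \qell{\Xi})$ are trivial. The five-term sequence therefore degenerates to
\[
    0 \to H^2(\G{\Xi/F}, \mu_\ell) \to \Br(\Xi/F)[\ell] \to 0,
\]
which is exactly the claim that the natural map is an isomorphism.

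There is no substantive obstacle here; the entire content is that the $\qell{\cdot}$-coefficient terms sandwiching the map of interest both vanish. One should only check that the map appearing in the reduced sequence agrees with the ``natural map'' referenced in the statement, which is immediate from the construction of the connecting maps in the proof of \cref{lem:five-term-exact-sequence} (via the short exact sequences $0 \to \mu_\ell \to \Xi^\x \arw\ell \Xi^{\x\ell} \to 0$ and $0 \to \Xi^{\x\ell} \to \Xi^\x \to \qell{\Xi} \to 0$, where under our hypothesis the second collapses and the first becomes the Kummer sequence $0 \to \mu_\ell \to \Xi^\x \arw\ell \Xi^\x \to 0$ whose connecting map gives the standard identification of $H^2(\G{\Xi/F}, \mu_\ell)$ with $\Br(\Xi/F)[\ell]$).
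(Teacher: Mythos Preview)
Your proof is correct and follows exactly the same approach as the paper: both invoke the five-term exact sequence of \cref{lem:five-term-exact-sequence} and observe that the hypothesis $\perfect{\Xi}$ kills the $\qell{\Xi}$-coefficient terms, collapsing the sequence to the desired isomorphism. Your additional remark verifying that the resulting map is the standard Kummer-theoretic one is a nice touch, though the paper leaves this implicit.
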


\begin{proof}
    Since $\qell{\Xi}=0$,
    the five-term exact sequence from \cref{lem:five-term-exact-sequence}
    degenerates into an isomorphism 
    $H^2(\G{\Xi/F}, \mu_\ell) \to \Br(\Xi/F)[\ell]$.
\end{proof}

\begin{thm}\label{thm:brauer sequence}
    Let $K$ be a number field and let $\Om/K$ be an $\ell$-sealed Galois extension.
    Then the restriction maps induce an exact sequence
    \[
        0 \to H^2(\G{\Om/K}, \mu_\ell) \to \Op_{\p \in \Places(K)}H^2(\G{\Om^\p/K_\p}, \mu_\ell) \to \Z/\ell\Z
    \]
    which fits into a commutative diagram
    \[
        \xymatrix{
            0 \ar[r]& H^2(\G{\Om/K}, \mu_\ell) \ar[r]\ar[d]& \Op_{\p \in \Places(K)}H^2(\G{\Om^\p/K_\p}, \mu_\ell) \ar[r]\ar[d]& \Z/\ell\Z\ar@{=}[d]\\
            0 \ar[r] & \Br(\Om/K)[\ell] \ar[r]& \Op_{\p \in \Places(K)}\Br(\Om^\p/K_\p)[\ell] \ar[r]& \Z/\ell\Z
        }
    \]
    with exact rows. 
\end{thm}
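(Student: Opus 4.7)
The plan is to construct the two rows of the diagram separately and glue them via the naturality clause of \cref{lem:five-term-exact-sequence}. I prove the bottom row first, which is essentially a packaging of the classical global Brauer sequence together with a finite-support approximation, and then deduce exactness of the top row by comparing the global and local five-term sequences, using \cref{lem:O/ell cohomology local decomposition} and \cref{prop:K^x/K^ell local surjection}. Commutativity of the diagram itself is then immediate from the second clause of \cref{lem:five-term-exact-sequence}.

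For the bottom row, I define the map $\Op_\p \Br(\Om^\p/K_\p)[\ell] \to \Z/\ell\Z$ as the composition with $\Op_\p \Br(K_\p)[\ell]$ followed by the classical invariant. Injectivity of $\Br(\Om/K)[\ell] \to \Op_\p \Br(\Om^\p/K_\p)[\ell]$ and the containment image $\sub$ kernel are immediate from the classical Brauer sequence for $K$. For the reverse containment, given $(\al_\p)$ in the kernel I lift it to $\al \in \Br(K)[\ell]$ via the classical sequence. The support of $\al$ is finite, and for each $\p$ in the support the local hypothesis $\al|_{K_\p} \in \Br(\Om^\p/K_\p)$ produces a finite Galois subextension $L_\p \sub \Om$ over $K$ in which $\al|_{K_\p}$ already dies; taking $L$ to be the Galois closure in $\Om$ of the compositum of the $L_\p$'s, I get $\al|_{L_\fk{P}} = 0$ for every $\fk{P}$ of $L$, hence $\al|_L = 0$ by the local-global principle for $L$, so $\al \in \Br(L/K) \sub \Br(\Om/K)$.

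For the top row I combine the naturality of the five-term sequences with \cref{lem:O/ell cohomology local decomposition}. For injectivity: if $\xi \in H^2(\G{\Om/K}, \mu_\ell)$ restricts to zero at every place, its Brauer image $\xi^B$ also restricts to zero everywhere, so $\xi^B = 0$ by bottom-row injectivity, whence $\xi = \de(\eta)$ for some $\eta \in H^0(\G{\Om/K}, \qell{\Om})$. Writing $\eta = (\eta_\p)_{\p \in S(K)}$ via \cref{lem:O/ell cohomology local decomposition}, the local vanishing $\de(\eta_\p) = 0$ forces each $\eta_\p$ to come from some $\be_\p \in \qell{K_\p}$, and \cref{prop:K^x/K^ell local surjection} glues these to $\be \in \qell{K}$ with image $\eta$, so $\xi = 0$. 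For exactness at the middle: given $(\xi_\p)$ mapping to $0$ in $\Z/\ell\Z$, its Brauer image $(\xi_\p^B)$ lifts via the bottom row to $\xi^B \in \Br(\Om/K)[\ell]$. The obstruction to further lifting $\xi^B$ to $H^2(\G{\Om/K}, \mu_\ell)$ lies in $H^1(\G{\Om/K}, \qell{\Om})$; by naturality and \cref{lem:O/ell cohomology local decomposition}, its image in the $\p$-summand $H^1(\G{\Om^\p/K_\p}, \qell{{\Om^\p}})$ coincides with the local obstruction of $\xi_\p^B$, which vanishes since $\xi_\p$ lifts $\xi_\p^B$. Hence $\xi^B$ lifts to $\xi \in H^2(\G{\Om/K}, \mu_\ell)$, and the local differences $\xi_\p - \xi|_\p \in \ker(H^2 \to \Br[\ell])$ are $\de$-images of elements $\eta_\p \in H^0(\G{\Om^\p/K_\p}, \qell{{\Om^\p}})$ (automatically zero for $\p \notin S$, since there the kernel is trivial by \cref{cor:H^2(mu_ell) equals Brauer in local non-ell case}); gluing via \cref{lem:O/ell cohomology local decomposition} yields $\eta \in H^0(\G{\Om/K}, \qell{\Om})$ with $\xi + \de(\eta)$ restricting to $(\xi_\p)$.

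The main obstacle I expect is the obstruction bookkeeping in the middle-exactness argument: one must identify the global $H^1$-obstruction as corresponding, under the decomposition of \cref{lem:O/ell cohomology local decomposition}, to the tuple of local obstructions, so that their place-by-place vanishing forces the global obstruction to vanish. This requires invoking the second clause of \cref{lem:five-term-exact-sequence} together with the fact that the isomorphism of \cref{lem:O/ell cohomology local decomposition} is realized by restriction maps, so that the commutative square of restrictions links the global connecting homomorphism with its local counterparts.
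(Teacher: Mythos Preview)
Your proposal is correct and follows essentially the same approach as the paper: both set up the ladder of global and local five-term sequences from \cref{lem:five-term-exact-sequence}, invoke \cref{prop:K^x/K^ell local surjection} for the top rung, \cref{lem:O/ell cohomology local decomposition} for the $H^0$ and $H^1$ rungs, \cref{cor:H^2(mu_ell) equals Brauer in local non-ell case} to collapse the places outside $S$, and the classical Brauer sequence for the $\Br[\ell]$ rung, then chase. The paper compresses all of this into a single five-row diagram and the phrase ``the required exact sequence now follows from a diagram chase'', whereas you have written out the chase (and the exactness of the bottom row) explicitly.
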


\begin{proof}
    Consider the commutative diagram
    \[
        \xymatrix@C+3pc{
            \qell{K} \ar[r]\ar[d]& \Op_{\p \in S(K)}\qell{K_\p} \ar[d]\\
            H^0(\G{\Om/K}, \qell{\Om}) \ar[r]\ar[d]& \Op_{\p \in S(K)}H^0(\G{\Om^\p/K_\p}, \qell{{\Om^\p}})\ar[d]\\
             H^2(\G{\Om/K}, \mu_\ell) \ar[r]\ar[d]& \Op_{\p \in \Places(K)}H^2(\G{\Om^\p/K_\p}, \mu_\ell)\ar[d]\\
             \Br(\Om/K)[\ell] \ar[r]\ar[d]& \Op_{\p \in \Places(K)}\Br(\Om^\p/K_\p)[\ell]\ar[d]\\
             H^1(\G{\Om/K}, \qell{\Om})\ar[r]& \Op_{\p \in S(K)}H^1(\G{\Om^\p/K_\p}, \qell{{\Om^\p}}).\\
        }
    \]

    In the second column, note that the third and fourth terms are summed over $\Places(K)$
    while the first, second and fifth terms are summed over $S(K)$.
    Thus, by \cref{lem:five-term-exact-sequence} and \cref{cor:H^2(mu_ell) equals Brauer in local non-ell case},
    the diagram has exact columns.
    By \cref{prop:K^x/K^ell local surjection} 
    the top horizontal map is surjective
    and by \cref{lem:O/ell cohomology local decomposition} the second and fifth horizontal maps are isomorphisms.
    We also know, by the classical Brauer exact sequence, that the fourth horizontal map is injective, with cokernel isomorphic to $\Z/\ell\Z$ or to $0$.
    The required exact sequence now follows from a diagram chase.
\end{proof}

\section{Neukirch's Theorem}\label{sec:neukirch}

In this section we prove Neukirch's Theorem for $\ell$-sealed extensions
(\cref{thm:neukirch}).
Namely, for an $\ell$-sealed Galois extension $\Om/K$
and a subgroup $H \le \G{\Om/K}$,
if $H$ is isomorphic to a local Galois group satisfying certain conditions,
then $H$ is local in $\G{\Om/K}$;
that is, $H$ is contained in $\G{\Om/K, \fk{P}}$ for some place $\fk{P}$ of $\Om$.

Let us start with an outline of the proof.
We take $E = \Om^H$,
and study the analogue of the Brauer exact sequence
from \cref{sec:brauer-sequence}
for the extension $\Om/E$.
Since the global cohomology group $H^2(\G{\Om/E}, \mu_\ell)$
depends only on the isomorphism type of $H$ 
(under the assumption $\mu_\ell \sub K$),
the exact sequence provides us with information about the local cohomology groups
$H^2(\G{\Om^\fk{P}/E_\fk{P}}, \mu_\ell)$,
from which we deduce there is a unique place $\fk{P}_E$ of $E$
producing a nonzero local cohomology group.
The assumptions of the theorem hold for any open subgroup $H' \le H$
and its corresponding fixed field $E' = \Om^{H'}$,
so for any finite subextension $E'/E$
there is a unique place $\fk{P}_{E'}$ of $E'$
producing a nonzero local cohomology group.
We can use this system of places to show there is a place of $E$
which is indecomposable in $\Om$,
and thus $H \le \G{\Om/K, \fk{P}}$ for some place $\fk{P}$ of $\Om$.
This final part is easy in the case $\fk{P}_E \notin S(E)$,
where $S$ is a finite set of rational places with respect to which $\Om$ is $\ell$-sealed,
and more complicated in the case $\fk{P}_E \in S(E)$.
We handle the case $\fk{P}_E \in S(E)$ with a topological argument on the space $S(E)$,
utilizing its compactness which follows from the finiteness of $S$.

We now prove a sequence of lemmas and propositions
needed for the proof of Neukirch's Theorem.

\begin{prop}\label{prop:local groups ell-disjoint}
    Let $K$ be a number field with $\Om/K$ an $\ell$-sealed Galois extension.
    Then for any two distinct places $\fk{P}_1, \fk{P}_2 \in \Places(\Om)$
    we have $\ell \ndiv \card{\G{\Om/K, \fk{P}_1} \cap \G{\Om/K, \fk{P}_2}}$.
\end{prop}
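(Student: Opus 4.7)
The plan is argument by contradiction. Suppose $\ell$ divides $|\G{\Om/K,\fk{P}_1} \cap \G{\Om/K,\fk{P}_2}|$ and choose $\s$ of order $\ell$ in this intersection. Set $F = \Om^{\ang{\s}}$, so $[\Om:F] = \ell$. Since $[F(\mu_\ell):F]$ divides both $\ell - 1$ and $\ell$, we get $\mu_\ell \sub F$, and Kummer theory yields $\al \in F^\x \setminus F^{\x\ell}$ with $\Om = F(\al^{1/\ell})$. Because $\s$ fixes both $\fk{P}_i$ while $\ang{\s} = \G{\Om/F}$ acts transitively on the primes of $\Om$ above any given prime of $F$, each $\fk{P}_i$ is the unique prime above $\fk{q}_i := \fk{P}_i|_F$; in particular $\fk{q}_1 \ne \fk{q}_2$ are two distinct non-split primes of $F$ in $\Om/F$, with $[\Om_{\fk{P}_i}:F_{\fk{q}_i}] = \ell$ and $\al \notin F_{\fk{q}_i}^{\x\ell}$.

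The main step, exploiting $\ell$-sealedness, is the claim that every $\fk{q} \in \Places(F) \setminus S(F)$ splits completely in $\Om/F$. Indeed, otherwise the unique prime $\fk{P}$ of $\Om$ above $\fk{q}$ would lie outside $S(\Om)$, so \cref{lem:O/ell=0 outside S} would give $\perfect{\Om_\fk{P}}$; iterating this yields $\al^{1/\ell^n} \in \Om_\fk{P}$ for every $n \ge 1$, so $F_\fk{q}(\al^{1/\ell^2}) \sub \Om_\fk{P}$. By the Vahlen-Capelli irreducibility criterion, applied using $\al \notin F_\fk{q}^{\x\ell}$, the polynomial $X^{\ell^2} - \al$ is irreducible over $F_\fk{q}$, so $[F_\fk{q}(\al^{1/\ell^2}):F_\fk{q}] = \ell^2$, contradicting $[\Om_\fk{P}:F_\fk{q}] = \ell$. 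The case $\ell = 2$ with the exceptional configuration $-4\al \in F_\fk{q}^{\x 4}$ (including the Archimedean subcase) is handled separately by noting that iteration forces $\mu_{2^\infty} \sub \Om_\fk{P}$ and then either exploiting the structure of $\Z_2^\x$ together with $[\Om_\fk{P}:F_\fk{q}] = 2$, or using that distinct complex conjugations in $\G{\Om/K}$ generate subgroups with trivial intersection.

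It follows that $\fk{q}_1, \fk{q}_2 \in S(F)$, and the only remaining configuration is $\fk{P}_1, \fk{P}_2 \in S(\Om)$. The hard part is ruling out this last case. I would do so by passing to a number field $F_0 \sub F$ containing $\al, \mu_\ell$ and separating $\fk{q}_1, \fk{q}_2$, applying the classical Brauer exact sequence to $F_0(\al^{1/\ell})/F_0$, and combining it with the constraint just derived—namely, any prime of $F_0$ outside $S(F_0)$ that is non-split in $F_0(\al^{1/\ell})/F_0$ must become split after ascending to a suitable larger finite subfield of $F$—together with the Chebotarev density theorem for $F_0(\al^{1/\ell})/F_0$ and the finiteness of $S$, to extract the desired numerical contradiction.
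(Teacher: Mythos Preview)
Your setup through step 6 is essentially correct: picking $\s$ of order $\ell$, setting $F = \Om^{\ang{\s}}$ with $\Om = F(\al^{1/\ell})$, and deducing that the two distinct non-split primes $\fk{q}_1, \fk{q}_2$ of $F$ must lie in $S(F)$ (with the $\ell = 2$ exceptional case handled somewhat loosely). The genuine gap is step 7. The tools you propose --- the classical Brauer sequence for a number field $F_0 \sub F$, Chebotarev, and finiteness of $S$ --- do not yield a contradiction. The constraint from step 5 says only that each non-$S$ prime of $F_0$ becomes split in $F_1(\al^{1/\ell})/F_1$ for \emph{some} larger number field $F_1 \sub F$ depending on the prime; since $F_1$ varies, no density or reciprocity statement on the fixed field $F_0$ is violated. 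More to the point, your step 7 neither re-invokes the $\ell$-sealedness of $\Om$ nor exploits the fact that there are \emph{two} distinct non-split primes rather than one --- and both ingredients are essential.

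The paper's argument supplies the missing idea directly, and in fact needs no $S$/non-$S$ case split at all (so your steps 5--6 turn out to be a detour). Working in a number field $L \sub F$ containing $\al, \mu_\ell$ with $\fk{q}_1|_L \ne \fk{q}_2|_L$, one uses \cref{prop:K^x/K^ell local surjection} to produce $\be \in L^\x$ that is an $\ell$-th power at every place of $S(L) \cup \{\fk{q}_1|_L\}$ and satisfies $\be \in \al \cdot L_{\fk{q}_2|_L}^{\x\ell}$. Then $\be^{1/\ell} \in \Om$ by $\ell$-sealedness; since $\fk{q}_1$ splits in $F(\be^{1/\ell})/F$ but is non-split in $\Om/F$ with $[\Om:F] = \ell$, one gets $\be^{1/\ell} \in F$, whence $\al \in F_{\fk{q}_2}^{\x\ell}$, contradicting the non-splitting of $\fk{q}_2$. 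This auxiliary element $\be$ --- engineered to be locally trivial at one distinguished prime and congruent to $\al$ at the other --- is precisely the construction absent from your proposal.
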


\begin{proof}
    Let $S$ be a finite set of rational places
    such that $\Om$ is $\ell$-sealed with respect to it.
    Let $H = \G{\Om/K, \fk{P}_1} \cap \G{\Om/K, \fk{P}_2}$
    and $E = \Om^H$.
    Note that $\fk{P}_1|_E$ and $\fk{P}_2|_E$ are indecomposable in $\Om$,
    and thus have to be distinct.

    Assume, for the sake of contradiction, that $\ell \divides \card{H}$.
    Then there is a finite subextension $E''/E$ of $\Om/E$ with order divisible by $\ell$.
    We may assume $E''/E$ is Galois and $\mu_\ell \sub E''$.
    As $\G{E''/E}$ is a finite group of order divisible by $\ell$,
    it has an element of order $\ell$,
    and therefore there is a subfield $E''/E'/E$
    such that $[E'':E'] = \ell$.
    It follows that $E'$ contains $\mu_\ell$ as well,
    and by Kummer Theory there is $\al \in E'$ such that $E'' = E'(\al^\frac1\ell)$.
    
    Let $L \sub E'$ be a number field such that $\fk{P}_1|_L \ne \fk{P}_2|_L$, $\mu_\ell \sub L$, and $\al\in L$.
    By \cref{prop:K^x/K^ell local surjection}
    for the set $S \cup \{\fk{P}_1|_\Q, \fk{P}_2|_\Q\}$,
    there is $\be \in L^\x$
    such that the images of $\be$ in $L_\p, \p \in S(L) \cup \{\fk{P}_1|_L\}$
    are all $\ell$th powers,
    while its image in $L_{\fk{P}_2|_L}$
    lies in $\al \cdot L_{\fk{P}_2|_L}^{\x\ell}$. 
    It follows that $\be^\frac1\ell \in \Om$.
    Let $\tld{L} = L(\be^\frac1\ell) \sub \Om$.
    Since $\fk{P}_1|_{L}$ splits completely in $\tld{L}$,
    we find that $\fk{P}_1|_{E'}$ splits completely in $\tld{L}E'$.
    However, there is a unique place of $\Om$ lying over $\fk{P}_1|_{E'}$,
    so we must have $\tld{L} \sub E'$.
    Thus $\al^\frac1\ell \in \tld{L}_{\fk{P}_2|_{\tld{L}}} \sub E'_{\fk{P}_2|_{E'}}$,
    implying that $\fk{P}_2|_{E'}$ splits completely in $E'' = E'(\al^\frac1\ell)$.
    Since there is a unique place of $\Om$ lying over $\fk{P}_2|_{E'}$,
    this implies $E' = E''$,
    contradicting $[E'':E'] = \ell$.
\end{proof}

\begin{lem}\label{lem:subgroup cohomology}
    Let $G$ be a profinite group and $M$ be a $G$-module. Let $H \le G$. Then
    \[
        H^n(H, M) \iso \dirlim_{H \sub U \sub G} H^n(U, M)
    \]
    where $U$ runs through the open subgroups in $G$ containing $H$, and the limit is taken with respect to the restriction maps.
\end{lem}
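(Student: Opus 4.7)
The plan is to reduce the statement to an identity on continuous cochains. Writing $C^n(-, M)$ for the standard complex of continuous cochains with values in the discrete module $M$, I would show
\[
    C^n(H, M) \iso \dirlim_{H \sub U \sub G} C^n(U, M),
\]
with transition maps given by restriction. Once this cochain-level identity is established, the cohomology statement follows because differentials are natural and the direct limit functor is exact, so it commutes with taking $H^n$.

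First I would verify surjectivity of the natural map from the right-hand side. A continuous cochain $f : H^n \to M$ necessarily factors through a finite quotient, since $H^n$ is compact and $M$ discrete; using that open normal subgroups of $G$ form a neighborhood basis at the identity, I can arrange the factorization through $(H/(H \cap N))^n$ for some open normal $N \trianglelefteq G$. Then $U := HN$ is an open subgroup containing $H$ with $U/N \iso H/(H \cap N)$, so pulling the factored map back along this isomorphism produces a cochain in $C^n(U, M)$ whose restriction to $H^n$ is $f$. For injectivity in the limit, if $f \in C^n(U, M)$ restricts to zero on $H^n$ and factors through $(U/N)^n$ for some open normal $N \trianglelefteq G$, the hypothesis forces $f|_{(HN/N)^n} = 0$, so the restriction of $f$ to the open intermediate subgroup $HN \supseteq H$ already vanishes inside the indexing system.

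The only real subtlety, and the main thing to be careful about, is ensuring that the open subgroup $N$ can be taken normal in the ambient group $G$, rather than merely in $H$ or $U$ — this is the step where one must invoke that $G$ itself, not just the subgroup being considered, is profinite. After that point, the rest of the argument is mechanical: the isomorphism on cochains is compatible with the coboundary map by construction, so passing to $H^n$ yields the desired isomorphism of cohomology groups.
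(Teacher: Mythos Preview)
Your proposal is correct. The paper's own proof is a one-line citation: it applies \cite[1.5.1]{cohomology of number fields} to the inverse system $H = \invlim_{H \sub U \sub G} U$, whereas you give a self-contained argument at the level of continuous cochains. Your route is more elementary and makes the mechanism transparent --- every continuous cochain on $H^n$ extends to some open $U = HN$, and every cochain on $U^n$ vanishing on $H^n$ already vanishes on the intermediate open subgroup $HN$ --- at the cost of being longer; the paper's route is terse but hides exactly this work inside the cited reference, whose proof proceeds along the same lines. One small point to tidy in your injectivity step: you write ``factors through $(U/N)^n$'' for $N$ open normal in $G$, which tacitly assumes $N \sub U$. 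Since $U$ is open in $G$ this can always be arranged (shrink $N$ to lie inside $U$), but it is worth saying so explicitly; alternatively, replace $N$ by $U \cap N$ throughout that paragraph.
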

\begin{proof}
    Apply \cite[1.5.1]{cohomology of number fields}
    to the inverse system $H = \invlim_{H \sub U \sub G} U$.
\end{proof}

\begin{lem}\label{lem:H2 of infinite local field}
    Let $\ell$ be a prime number and let $F$ be a non-Archimedean local field of characteristic zero.
    Let $\Xi/E/F$ be a tower of algebraic extensions
    with $\mu_\ell \sub \Xi$ and $\perfect{\Xi}$.
    Then $H^2(\G{\Xi/E}, \mu_\ell)$ is isomorphic to $\Z/\ell\Z$ if $\ell^\infty \ndiv [E:F]$
    and to $0$ otherwise.
\end{lem}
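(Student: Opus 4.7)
The plan is to identify $H^2(\G{\Xi/E}, \mu_\ell)$ with $\Br(\Xi/E)[\ell]$ using \cref{cor:H^2(mu_ell) equals Brauer in local non-ell case}, and then to compute this Brauer kernel through a direct limit over finite subextensions of $E/F$ equipped with local invariants. To begin, I would apply \cref{cor:H^2(mu_ell) equals Brauer in local non-ell case} with base field $E$ and Galois extension $\Xi/E$---the hypotheses $\mu_\ell \sub \Xi$ and $\perfect{\Xi}$ being in force---to obtain a natural isomorphism $H^2(\G{\Xi/E}, \mu_\ell) \iso \Br(\Xi/E)[\ell]$. Moreover, \cref{lem:Xi/ell=0 implies ell^infty | [Xi:F]} applied to $\Xi/F$ yields $\ell^\infty \divides [\Xi:F]$, and hence $\ell^\infty \divides [\Xi:E_0]$ for every finite subextension $E_0/F$ of $E/F$.

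Next I would expand $\Br(\Xi/E) = \dirlim_{E_0}\Br(\Xi/E_0)/\Br(E/E_0)$, where $E_0$ runs over the finite subextensions of $E/F$. Each such $E_0$ is itself a non-Archimedean local field, so local class field theory identifies $\Br(E_0)$ with $\Q/\Z$ via the invariant map, under which restriction along a finite extension $E_1/E_0$ becomes multiplication by $[E_1:E_0]$. Consequently $\Br(\Xi/E_0)$ (resp.\ $\Br(E/E_0)$) corresponds to the subgroup of $q \in \Q/\Z$ whose order divides $[\Xi:E_0]$ (resp.\ $[E:E_0]$) in the supernatural sense. Decomposing $\Q/\Z = \bigoplus_p \Q_p/\Z_p$, the $\ell$-primary part of $\Br(\Xi/E_0)$ is all of $\Q_\ell/\Z_\ell$ by the divisibility just noted, whereas the $\ell$-primary part of $\Br(E/E_0)$ depends on the $\ell$-adic valuation of $[E:E_0]$.

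The two cases are then immediate. If $\ell^\infty \divides [E:F]$, then $\ell^\infty \divides [E:E_0]$ for every finite $E_0 \sub E$, so the $\ell$-primary parts of $\Br(\Xi/E_0)$ and $\Br(E/E_0)$ agree and each $(\Br(\Xi/E_0)/\Br(E/E_0))[\ell]$ vanishes, giving $\Br(\Xi/E)[\ell] = 0$. If $\ell^\infty \ndiv [E:F]$ then the $\ell$-part of $[E:F]$ is some $\ell^k < \infty$, and any $E_0 \sub E$ with $\ell^k \divides [E_0:F]$ satisfies $\ell \ndiv [E:E_0]$; beyond such an $E_0$ the kernel $\Br(E/E_0)$ has trivial $\ell$-primary part, so $(\Br(\Xi/E_0)/\Br(E/E_0))[\ell] \iso (\Q_\ell/\Z_\ell)[\ell] \iso \Z/\ell\Z$. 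The transition maps between such stages are multiplication by integers prime to $\ell$, hence isomorphisms on $\ell$-torsion, so the direct limit is $\Z/\ell\Z$.

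The main subtlety I anticipate is the direct-limit bookkeeping: one must separate the $\ell$-primary from the prime-to-$\ell$ content of the Brauer groups and verify that the transition maps preserve the $\ell$-torsion structure. Once the local invariants are identified and the restriction maps are recognized as multiplication by degrees, everything else reduces to elementary arithmetic of supernatural numbers.
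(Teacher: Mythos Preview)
Your proposal is correct and follows essentially the same approach as the paper: reduce to $\Br(\Xi/E)[\ell]$ via \cref{cor:H^2(mu_ell) equals Brauer in local non-ell case}, express it as a direct limit over finite subextensions $E_0/F$ of $E/F$, and analyze the transition maps using local invariants. The paper's presentation is marginally more direct in that it works with $\dirlim_{E_0}\Br(\Xi/E_0)[\ell]$ (invoking \cref{lem:subgroup cohomology}) and observes each term is already $\Z/\ell\Z$, rather than introducing the quotient by $\Br(E/E_0)$, but the underlying computation is the same.
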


\begin{proof}
    By \cref{cor:H^2(mu_ell) equals Brauer in local non-ell case}
    we have $H^2(\G{\Xi/E}, \mu_\ell) \iso \Br(\Xi/E)[\ell]$.
    By \cref{lem:subgroup cohomology}
    we have $\Br(\Xi/E)[\ell] \iso \dirlim_{F'}\Br(\Xi/F')[\ell]$
    where $F'$ runs through the finite subextensions of $E/F$
    and the direct limit is taken with respect to the restriction maps.

    By \cref{lem:Xi/ell=0 implies ell^infty | [Xi:F]}
    we find that $\ell \divides [\Xi : F']$ for every such $F'$,
    so we have $\Br(\Xi/F')[\ell] \iso \Z/\ell\Z$.
    Further, for a tower $E/F''/F'/F$,
    the restriction map $\Br(\Xi/F')[\ell] \to \Br(\Xi/F'')[\ell]$
    is zero if $\ell \divides [F'' : F']$,
    and is an isomorphism otherwise.
    The result follows.
\end{proof}

\begin{cor}\label{cor:H2 unchanged by finite subextension}
    Let $\ell$ be a prime number and let $F$ be a non-Archimedean local field of characteristic zero.
    Let $\Xi/E'/E/F$ be a tower of algebraic extensions
    with $\mu_\ell \sub \Xi$, $\perfect{\Xi}$ and $E'/E$ finite.
    Then $H^2(\G{\Xi/E}, \mu_\ell) \ne 0$ if and only if $H^2(\G{\Xi/E'}, \mu_\ell) \ne 0$,
    and in this case they are both isomorphic to $\Z/\ell\Z$.
\end{cor}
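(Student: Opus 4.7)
The plan is to reduce this directly to \cref{lem:H2 of infinite local field}. That lemma computes $H^2(\G{\Xi/E},\mu_\ell)$ and $H^2(\G{\Xi/E'},\mu_\ell)$ in terms of whether $\ell^\infty$ divides $[E:F]$ and $[E':F]$, respectively, yielding either $\Z/\ell\Z$ or $0$ in each case. So the corollary boils down to showing
\[
    \ell^\infty \divides [E:F] \iff \ell^\infty \divides [E':F],
\]
which together with the lemma gives both the equivalence and the stated isomorphism type.

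For this equivalence, I would use multiplicativity of degrees as supernatural numbers: $[E':F] = [E':E]\cdot[E:F]$. Since $E'/E$ is a finite extension, $[E':E]$ is a finite natural number, and in particular its $\ell$-part is a finite power of $\ell$. Thus the $\ell$-component of the supernatural number $[E':F]$ is infinite if and only if the $\ell$-component of $[E:F]$ is infinite, which is exactly the desired equivalence.

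There is no real obstacle here; the work has already been done in \cref{lem:H2 of infinite local field} (which in turn relied on \cref{cor:H^2(mu_ell) equals Brauer in local non-ell case}, \cref{lem:subgroup cohomology}, and \cref{lem:Xi/ell=0 implies ell^infty | [Xi:F]}). The corollary is just the observation that the dichotomy provided by that lemma is insensitive to finite base change of $E$.
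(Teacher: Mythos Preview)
Your proposal is correct and matches the paper's own proof essentially verbatim: apply \cref{lem:H2 of infinite local field} to both $E$ and $E'$, then note that $\ell^\infty \divides [E:F]$ iff $\ell^\infty \divides [E':F]$ because $[E':E]$ is finite.
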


\begin{proof}
    By \cref{lem:H2 of infinite local field},
    $H^2(\G{\Xi/E}, \mu_\ell) \ne 0$ if and only if $\ell^\infty \ndiv [E:F]$, in which case it is isomorphic to $\Z/\ell\Z$.
    The same holds for $H^2(\G{\Xi/E'}, \mu_\ell)$ with the condition $\ell^\infty \ndiv [E':F]$.
    Since $E'/E$ is finite,
    these conditions are equivalent.
\end{proof}

\begin{lem}\label{lem:degree locally implies globally}
    Let $K$ be a number field with a given place $\p$,
    let $\Om/K$ be an algebraic extension and let $n \ge 1$ be an integer.
    If $n \divides [\Om_\fk{P}:K_\p]$ for every $\fk{P} \in \Places_\p(\Om)$
    then $n \divides [\Om:K]$.
    In particular, if $\ell^\infty \divides [\Om_\fk{P}:K_\p]$ 
    for every $\fk{P} \in \Places_\p(\Om)$
    then $\ell^\infty \divides [\Om:K]$.
\end{lem}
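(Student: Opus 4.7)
The supernatural assertion $\ell^\infty \divides [\Om:K]$ follows from the finite case applied to each prime power $\ell^k$, so I focus on the case of a finite integer $n$. The crucial preliminary observation is that for any finite subextension $L/K$ of $\Om/K$ one has $[L:K] = \sum_{\p' \in \Places_\p(L)}[L_{\p'}:K_\p]$, so if $n$ divides every local degree $[L_{\p'}:K_\p]$ then $n \divides [L:K]$ and a fortiori $n \divides [\Om:K]$. Thus it suffices to exhibit a single finite subextension $L/K$ of $\Om/K$ such that $n \divides [L_{\p'}:K_\p]$ for every $\p' \in \Places_\p(L)$.

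To construct such an $L$, the plan is a compactness argument on the profinite space $\Places_\p(\Om) = \invlim_{L} \Places_\p(L)$, where $L$ ranges over finite subextensions of $\Om/K$. For each such $L$, consider
\[
A_L = \{\fk{P} \in \Places_\p(\Om) : n \divides [L_{\fk{P}|_L}:K_\p]\}.
\]
Since $A_L$ is the pullback of a subset of the finite discrete set $\Places_\p(L)$ under the restriction map, it is clopen. The family $\{A_L\}$ is directed under inclusion: if $L \sub L'$ then $L_{\fk{P}|_L} \sub L'_{\fk{P}|_{L'}}$, so $[L_{\fk{P}|_L}:K_\p] \divides [L'_{\fk{P}|_{L'}}:K_\p]$, giving $A_L \sub A_{L'}$. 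Since $\Om_\fk{P} = \bigcup_L L_{\fk{P}|_L}$ and $n$ is finite, the hypothesis $n \divides [\Om_\fk{P}:K_\p]$ yields, for each $\fk{P}$, some finite $L_\fk{P}$ with $\fk{P} \in A_{L_\fk{P}}$. Hence $\{A_L\}$ covers $\Places_\p(\Om)$, and by compactness together with directedness a single $A_L$ already equals $\Places_\p(\Om)$; this $L$ has the required property, because the restriction map $\Places_\p(\Om) \to \Places_\p(L)$ is surjective.

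The main subtlety is that for a non-Galois finite extension $L/K$, an individual local degree $[L_{\p'}:K_\p]$ need not divide $[L:K]$; it only contributes to $[L:K]$ additively, which is why the uniform divisibility of all local degrees by $n$ is exactly what is needed. The naive strategy of choosing an $L_\fk{P}$ for each $\fk{P}$ and passing to a compositum fails, since the compositum can refine places above $\p$ and introduce new local factors for which the divisibility was never arranged, and iterating this process need not terminate in a finite extension. Compactness of $\Places_\p(\Om)$ is precisely what collapses this potentially unbounded refinement into a single finite choice of $L$.
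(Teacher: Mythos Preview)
Your proof is correct and follows essentially the same route as the paper: define the open sets $A_L$ (the paper's $U_L$), use the hypothesis to see they cover the compact space $\Places_\p(\Om)$, use directedness under compositum to collapse a finite subcover to a single $L$, and then apply the degree sum formula $[L:K]=\sum_{\p'\mid\p}[L_{\p'}:K_\p]$. The only additions are your explicit remark on surjectivity of the restriction map and the closing commentary, neither of which changes the argument.
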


\begin{proof}
    Consider the profinite space
    $\Places_\p(\Om)$.
    For every finite subextension $L/K$ of $\Om/K$,
    we have an open subset $U_L \sub \Places_\p(\Om)$ consisting of the places $\fk{P}$
    satisfying $n \divides [L_{\fk{P}|_{L}} : K_\p]$.
    These form an open cover of $\Places_\p(\Om)$, 
    so there is a finite subcover among the $U_L$.
    Since $U_{L_1} \cup U_{L_2} \sub U_{L_1L_2}$,
    there is a single finite subextension $L/K$ of $\Om/K$
    such that $n \divides [L_\fk{P}:K_\p]$
    for every $\fk{P} \in \Places_\p(L)$.
    This implies $[L:K] = \sum_{\fk{P} \divides \p} [L_\fk{P} : K_\p]$
    is also divisible by $n$.
\end{proof}

\begin{lem}\label{lem:indecomposable places are closed}
    Let $E'/E/\Q$ be a tower of algebraic extensions.
    Then the collection of places of $E$ that are indecomposable in $E'$ forms a closed subset of $\Places(E)$.
\end{lem}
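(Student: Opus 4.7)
My plan is to show that the set of decomposable places is open in $\Places(E)$, from which closedness of its complement (the indecomposable places) follows. The first reduction is to a finite level: given $\p_0 \in \Places(E)$ decomposable in $E'$, choose distinct $\fk{P}_1, \fk{P}_2 \in \Places(E')$ above $\p_0$; using $\Places(E') = \invlim_L \Places(L)$ over number fields $L \sub E'$, one such $L$ has $\fk{P}_1|_L \ne \fk{P}_2|_L$. Setting $L_0 := L \cdot E \sub E'$, a finite extension of $E$, the induced places $\fk{P}_j|_{L_0}$ remain distinct, so $\p_0$ is already decomposable in $L_0$. It therefore suffices to show, for any finite extension $L_0/E$ inside $\ov{\Q}$, that the set $\{\p \in \Places(E) : \p \text{ is decomposable in } L_0\}$ is open.

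For the finite case I would pass to the Galois closure. Let $\tld L$ be the Galois closure of $L/\Q$ in $\ov{\Q}$ (a number field) and set $K_0 := \tld L \cap E$; after replacing $L$ by $L \cdot K_0 \sub \tld L$ we have $L \cap E = \tld L \cap E = K_0$ and still $L \cdot E = L_0$, with $\tld L/K_0$ a finite Galois extension. For every number field $K'$ with $K_0 \sub K' \sub E$, the extension $\tld L K'/K'$ is finite Galois with Galois group $G := \Gal(\tld L/K_0)$ independent of $K'$, and $L K' = (\tld L K')^{H_L}$ for $H_L := \Gal(\tld L/L) \le G$. The primes of $L K'$ above $\p|_{K'}$ are accordingly parameterized by $H_L \backslash G / D_{K'}(\p)$, where $D_{K'}(\p) \le G$ is the decomposition group at some place above $\p|_{K'}$ in $\tld L K'/K'$.

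As $K'$ grows inside $E$, the group $D_{K'}(\p)$ can only shrink (local Galois groups shrink under base extension of completions, since $E_\p = \bigcup_{K'} K'_{\p|_{K'}}$), and since $G$ is finite the sequence stabilizes to the decomposition group of $\p$ in $\tld L \cdot E / E$; thus $\p$ is decomposable in $L_0$ iff $H_L D_{K'}(\p) \ne G$ for some (and hence all sufficiently large) $K'$. Crucially the condition propagates forward in $K'$, since $D_{K''}(\p) \le D_{K'}(\p)$ implies $H_L D_{K''}(\p) \le H_L D_{K'}(\p)$. Consequently the set of $\p \in \Places(E)$ decomposable in $L_0$ equals the increasing union over $K' \supseteq K_0$ of $\{\p \in \Places(E) : \p|_{K'} \text{ is decomposable in } L K'/K'\}$, each of which is the preimage under the continuous restriction $\Places(E) \to \Places(K')$ of a subset of the discrete space $\Places(K')$, hence basic clopen; the union is therefore open.

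The main technical hurdle will be carefully arranging $\tld L \cap E = L \cap E = K_0$ so that $\tld L/K_0$ is honestly Galois, and verifying the monotonic shrinking of $D_{K'}(\p)$ via the tower of completions; the rest is a routine compactness and double-coset computation.
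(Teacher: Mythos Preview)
Your approach is correct, and it shares with the paper the opening move of showing that the decomposable locus is open by passing to a number field $K' \sub E'$ that separates two lifts of the given place. From there the routes diverge. The paper sets $K = E \cap K'$ and directly asserts that the basic open set $\Places_{\fk P|_K}(E)$ consists entirely of places decomposable in $E'$; you instead reduce to a finite extension $L_0 = LE$ of $E$, pass to the Galois closure $\tld L$ of $L$ over $\Q$, and exhibit the decomposable locus as an increasing union of pullbacks from the discrete spaces $\Places(K')$ (for number fields $K_0 \sub K' \sub E$) via the double-coset criterion $H_L D_{K'}(\p) \ne G$. One small point you should make explicit: the monotonicity $D_{K''}(\p) \sub D_{K'}(\p)$ requires fixing once and for all a place of $\ov\Q$ above $\p$ and taking every $D_{K'}(\p)$ relative to its restriction, rather than choosing ``some place'' independently at each level; with that understood, your stabilization argument is fine. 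Your route is longer, but it is also more carefully grounded: the paper's one-line implication is not valid for arbitrary (non-Galois) $K'$ --- for instance, with $E$ a non-Galois cubic inside its $S_3$-closure $M = E'$, $K'$ a conjugate cubic, and a rational prime whose Frobenius is a transposition, one checks that $\fk P|_{\Q}$ decomposes in $K'$ while one of the two places of $E$ above it is indecomposable in $M$ --- and what repairs the paper's argument is precisely replacing $K'$ by its Galois closure over $\Q$, which is the step you carry out explicitly.
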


\begin{proof}
    Let $\fk{P} \in \Places(E)$ be a place that is decomposable in $E'$.
    Let $\fk{P}_1, \fk{P}_2 \in \Places_{\fk{P}}(E')$ be two distinct places
    and let $K' \sub E'$ be a number field
    such that $\fk{P}_1|_{K'} \ne \fk{P}_2|_{K'}$.
    Let $K = E \cap K'$.
    \[
        \xymatrix{
            & E'\ar@{-}[ld]\ar@{-}[rd] \\
            E\ar@{-}[rd] && K'\ar@{-}[ld] \\
            & K
        }
    \]
    We find that $\fk{P}_1|_{K'}$ and $\fk{P}_2|_{K'}$ 
    are distinct places lying over $\fk{P}|_K$,
    so $\fk{P}|_K$ is decomposable in $K'$.
    Therefore every place of $\Places_{\fk{P}|_K}(E)$
    is decomposable in $E'$.
    The set 
    $\Places_{\fk{P}|_K}(E)$
    is an open neighborhood of $\fk{P}$ in $\Places(E)$, proving that the set of decomposable places is open.
\end{proof}

\begin{prop}\label{prop:infinite indecomposable place}
    Let $\Om/E/\Q$ be a tower of algebraic extensions
    and let $S$ be a finite set of rational places.
    Suppose that for any finite subextension $E'/E$ of $\Om/E$
    there exists $\fk{P} \in S(E)$ which is indecomposable in $E'$.
    Then there exists $\fk{P} \in S(E)$
    which is indecomposable in $\Om$.
\end{prop}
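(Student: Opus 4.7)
The plan is to prove this by a compactness argument on the space $S(E)$. The key observations are that $S(E)$ is compact (in fact profinite, as it is the inverse limit over number fields $K \sub E$ of the finite sets $S(K)$, since $S$ is finite), and that indecomposability in a fixed extension is a closed condition by \cref{lem:indecomposable places are closed}.

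First I would set up, for each finite subextension $E'/E$ of $\Om/E$, the subset
\[
    C_{E'} = \{ \fk{P} \in S(E) \mid \fk{P} \text{ is indecomposable in } E' \}.
\]
By \cref{lem:indecomposable places are closed} each $C_{E'}$ is a closed subset of $\Places(E)$, hence a closed subset of $S(E)$; by hypothesis each $C_{E'}$ is nonempty. Next I would check that the family $\{C_{E'}\}_{E'}$ has the finite intersection property. Given finitely many $E_1',\dots,E_n'$, the compositum $E^* = E_1'\cdots E_n'$ is again a finite subextension of $\Om/E$, and if a place $\fk{P} \in S(E)$ is indecomposable in $E^*$ then it is indecomposable in each $E_i'$, so $C_{E^*} \sub \bigcap_{i=1}^n C_{E_i'}$, and the left-hand side is nonempty.

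Since $S(E)$ is compact and the $C_{E'}$ form a family of closed subsets with the finite intersection property, their total intersection $\bigcap_{E'} C_{E'}$ is nonempty. Any place $\fk{P}$ in this intersection is indecomposable in every finite subextension $E'/E$ of $\Om/E$. Finally I would translate this into indecomposability in $\Om$: since $\Places_\fk{P}(\Om) = \invlim_{E'} \Places_\fk{P}(E')$ where $E'$ ranges over finite subextensions of $\Om/E$, and each $\Places_\fk{P}(E')$ has a single element by construction, the inverse limit has a single element, so $\fk{P}$ is indecomposable in $\Om$.

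The only delicate point is really justifying the compactness of $S(E)$, which follows from the stated description of $\Places(E)$ as an inverse limit over number fields together with the finiteness of $S$ (ensuring each $S(K)$ is finite for $K$ a number field). Once that is in hand, the proof is a clean application of the finite intersection property, and no further number-theoretic input is needed beyond the already-established \cref{lem:indecomposable places are closed}.
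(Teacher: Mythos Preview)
Your proof is correct and follows essentially the same approach as the paper's own proof: define the closed sets $C_{E'}$, verify the finite intersection property via composita, and invoke compactness of $S(E)$. You include slightly more detail (justifying compactness of $S(E)$ and spelling out why indecomposability in every finite $E'$ gives indecomposability in $\Om$), but the argument is the same.
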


\begin{proof}
    For any finite $E'/E$, the set $C_{E'} \sub S(E)$
    of places that are indecomposable in $E'$
    is closed by \cref{lem:indecomposable places are closed}.
    Considering finite intersections of these sets, we have
    $C_{E'_1} \cap \dots \cap C_{E'_n} \supseteq C_{E'_1 \dots E'_n}$
    and this set is nonempty by assumption.
    The result follows by compactness of $S(E)$.
\end{proof}

We can now prove Neukirch's Theorem for $\ell$-sealed extensions.

\begin{thm}\label{thm:neukirch}
    Let $K$ be a number field with $\Om/K$ an $\ell$-sealed Galois extension.
    Suppose $H \le \G{\Om/K}$ is a subgroup isomorphic to $\G{\Xi/F}$,
    where $F$ is a non-Archimedean local field of characteristic zero,
    and $\Xi/F$ is a Galois extension satisfying $\mu_\ell \sub \Xi$ and $\perfect{\Xi}$.
    Then there exists a unique non-Archimedean place $\fk{P}$ of $\Om$ such that $H \le \G{\Om/K, \fk{P}}$.
\end{thm}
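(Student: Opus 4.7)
My plan is to prove the theorem in two parts: uniqueness, which follows quickly from previous results, and existence, which follows the outline stated just before the theorem.

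For uniqueness, I would use that the order of $H \iso \G{\Xi/F}$ is divisible by $\ell^\infty$ by \cref{lem:Xi/ell=0 implies ell^infty | [Xi:F]}, while by \cref{prop:local groups ell-disjoint} any intersection of two distinct decomposition subgroups in $\G{\Om/K}$ has order prime to $\ell$. Hence $H$ cannot be contained in two decomposition subgroups simultaneously.

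For existence, set $E = \Om^H$ and fix a finite set $S$ of rational places witnessing the $\ell$-sealedness of $\Om$. I would first reduce to the case $\mu_\ell \sub K$ by passing to $K(\mu_\ell)$ and replacing $H$ by its intersection with $\G{\Om/K(\mu_\ell)}$ (still of the required form, with $F$ enlarged by a finite extension that contains $\mu_\ell$). Then the action of $H$ on $\mu_\ell$ is trivial on both sides of the isomorphism $H \iso \G{\Xi/F}$, so \cref{lem:H2 of infinite local field} gives $H^2(\G{\Om/E}, \mu_\ell) \iso \Z/\ell\Z$, and the same conclusion holds for every open subgroup $H' \le H$. For each open $U \le \G{\Om/K}$ containing $H$, the fixed field $K_U = \Om^U$ is a number field; applying \cref{thm:brauer sequence} to $\Om/K_U$ and passing to the direct limit over $U$ via \cref{lem:subgroup cohomology} produces an exact sequence of Brauer type for $\Om/E$. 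Since the global term is nonzero, exactly one local summand is nonzero, picking out a unique place $\fk{P}_E$ of $E$. The same reasoning applied to every open subgroup $H' \le H$ (with $E' = \Om^{H'}$ finite over $E$) yields a compatible system of unique places $\fk{P}_{E'}$ of $E'$ with nonzero local $H^2$.

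It remains to produce a place of $E$ indecomposable in $\Om$, which I would do by splitting cases on whether $\fk{P}_E \in S(E)$. In the easy case $\fk{P}_E \notin S(E)$, \cref{lem:O/ell=0 outside S} gives $\perfect{\Om_\fk{P}}$ for every $\fk{P}$ of $\Om$ above $\fk{P}_E$, so \cref{lem:H2 of infinite local field} computes the local cohomology at any place $\fk{P}'$ of a finite $E'/E$ above $\fk{P}_E$ via the supernatural degree $[E'_{\fk{P}'} : K_\p]$. Since $E'/E$ is finite, a routine divisibility check shows that every place of $E'$ above $\fk{P}_E$ produces nonzero local cohomology; but the Brauer sequence allows only one such place, forcing indecomposability at every finite level and hence in $\Om$. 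The main obstacle is the case $\fk{P}_E \in S(E)$: here $\perfect{\Om_\fk{P}}$ need not hold and the clean divisibility argument breaks down. My plan is to exploit the compactness of the profinite space $S(E)$, which is guaranteed by the finiteness of $S$, together with the system $\{\fk{P}_{E'}\}$ to verify that for every finite $E'/E$ some place of $S(E)$ is indecomposable in $E'$, and then invoke \cref{prop:infinite indecomposable place} to extract a single place of $S(E)$ indecomposable in all of $\Om$. Either way, $H$ is contained in the decomposition subgroup of some non-Archimedean place $\fk{P}$ of $\Om$, completing the proof.
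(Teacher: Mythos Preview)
Your overall architecture matches the paper's, but two steps are genuinely underargued.

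\textbf{The jump to ``exactly one local summand is nonzero''.} After taking the direct limit of the Brauer sequences over open $U \supseteq H$, you assert that the global term being $\Z/\ell\Z$ forces a unique place $\fk{P}_E$ of $E$ with nonzero local $H^2$. This hides two nontrivial points. First, the rightmost term in the limit is $\varinjlim_U \Z/\ell\Z$ with transition maps given by multiplication by the degree, so it is $\Z/\ell\Z$ or $0$ according to whether $\ell^\infty \ndiv [E:K]$ or $\ell^\infty \divides [E:K]$; a priori the middle term could have $\F_\ell$-dimension $2$. The paper first bounds the dimension by $2$, then uses this finiteness to find some $\q \notin S(K)$ with $\cH_{E,\fk{Q}} = 0$ for all $\fk{Q}$ above $\q$, and invokes \cref{lem:H2 of infinite local field} and \cref{lem:degree locally implies globally} to conclude $\ell^\infty \divides [E:K]$, collapsing the dimension to $1$. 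Second, even once the middle term is one-dimensional, it is $\varinjlim_L \bigoplus_{\q \in \Places(L)} \cH_{L,\q}$, not $\bigoplus_{\fk{Q} \in \Places(E)} \cH_{E,\fk{Q}}$: a class at level $L$ could die in every $\cH_{E,\fk{Q}}$ yet survive in the limit if the witnessing $L'$ depends on $\fk{Q}$. The paper rules this out by a compactness argument on $\Places_\q(E)$, showing that if all $\cH_{E,\fk{Q}}$ vanished then the whole limit would vanish. Only then does one get a (necessarily unique) $\fk{P}_E$.

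\textbf{The reduction to $\mu_\ell \sub K$.} Replacing $H$ by $H' = H \cap \G{\Om/K(\mu_\ell)}$ and proving $H' \le \G{\Om/K,\fk{P}}$ for some $\fk{P}$ does not by itself give $H \le \G{\Om/K,\fk{P}}$. The paper closes this gap with a short conjugation argument: for $h \in H$ the intersection $H' \cap \cnj{h}{H'}$ is open in $H$, hence has order divisible by $\ell^\infty$, and sits inside $\G{\Om/K,\fk{P}} \cap \G{\Om/K,h\fk{P}}$; \cref{prop:local groups ell-disjoint} then forces $h\fk{P} = \fk{P}$. You should include this, and also note that the finite extension of $F$ corresponding to $H'$ need not yet contain $\mu_\ell$, so one may have to shrink $H'$ once more (as the paper does with $F' = \tld F(\mu_\ell)$) before the module $\mu_\ell$ becomes trivial on the local side.
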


\begin{proof}    
    By \cref{lem:Xi/ell=0 implies ell^infty | [Xi:F]}
    we have $\ell^\infty \divides \card{H}$.
    Thus, uniqueness follows from \cref{prop:local groups ell-disjoint},
    and it suffices to prove the existence of a non-Archimedean $\fk{P}$ with $H \le \G{\Om/K, \fk{P}}$.
    In fact, this cannot hold for any Archimedean place,
    since $\G{\Om/K, \fk{P}}$ has size $1$ or $2$
    for $\fk{P}$ Archimedean, which is not divisible by $\ell^\infty$.
    Thus we ignore the non-Archimedean condition.
    Let $E = \Om^H$. 
    The property $H \le \G{\Om/K, \fk{P}}$
    is equivalent to $\fk{P}|_E$ being indecomposable in $\Om$.
    Therefore, we need to prove there exists a place of $E$
    that is indecomposable in $\Om$.

    Let $S$ be a finite set of rational places
    such that $\Om$ is $\ell$-sealed with respect to it.
    We start by proving the theorem in the case that
    $\mu_\ell \sub K$ and $\mu_\ell \sub F$.
    Since $\G{\Om/E} = H \iso \G{\Xi/F}$, and since $\mu_\ell \iso \Z/\ell\Z$ both as a $\G{\Om/E}$-module and a $\G{\Xi/F}$-module,
    we have
    \[ 
        H^2(\G{\Om/E}, \mu_\ell) \iso H^2(H, \Z/\ell\Z) \iso H^2(\G{\Xi/F}, \mu_\ell) \iso \Z/\ell\Z
    \]
    where the last isomorphism uses \cref{lem:H2 of infinite local field}.

    For every subextension $L/K$ of $E/K$,
    we will denote the group $H^2(\G{\Om/L}, \mu_\ell)$
    by $\cH_L$
    and the group $H^2(\G{\Om^\q/L_\q}, \mu_\ell)$
    by $\cH_{L, \q}$.
    For $L/K$ finite, \cref{thm:brauer sequence} gives rise to a commutative diagram
    \[
        \xymatrix{
            0 \ar[r]& \cH_L \ar[r]\ar[d]& \Op_{\q \in \Places(L)}\cH_{L,\q} \ar[r]\ar[d]& \Z/\ell\Z\ar@{=}[d]\\
            0 \ar[r]& \Br(\Om/L)[\ell]\ar[r]&\Op_{\q \in \Places(L)}\Br(\Om^\q/L_\q)[\ell]\ar[r] &\Z/\ell\Z
        }
    \]
    and taking the direct limit over $K \sub L \sub E$ via restriction maps, we get 
    the commutative diagram with exact rows
    \[
        \xymatrix{
            0 \ar[r] & \cH_E \ar[r]\ar[d] & \dirlim_{K \sub L \sub E}\Op_{\q \in \Places(L)}\cH_{L,\q} \ar[r]\ar[d] & \dirlim_{K \sub L \sub E}\Z/\ell\Z \ar@{=}[d]\\
            0 \ar[r] & \Br(\Om/E)[\ell] \ar[r] & \dirlim_{K \sub L \sub E}\Op_{\q \in \Places(L)}\Br(\Om^\q/L_\q)[\ell] \ar[r] & \dirlim_{K \sub L \sub E}\Z/\ell\Z
        }
    \]
    where the terms on the left are computed by \cref{lem:subgroup cohomology}.
    Recall that $\cH_E \iso \Z/\ell\Z$.
    Further, the limit $\dirlim_{K \sub L \sub E}\Z/\ell\Z$ 
    in the bottom row
    is taken with respect to multiplication by the degree,
    by the classical theory of the Brauer exact sequence.
    It is therefore isomorphic to $\Z/\ell\Z$ if $\ell^\infty \ndiv [E:K]$
    and to $0$ otherwise.
    The same, of course, holds for the limit $\dirlim_{K \sub L \sub E}\Z/\ell\Z$ in the top row.
    It follows that $\dirlim_{K \sub L \sub E}\Op_{\q \in \Places(L)}\cH_{L,\q}$,
    which is a vector space over $\F_\ell$,
    has dimension $2$ if $\ell^\infty \ndiv [E:K]$
    and dimension $1$ otherwise.
    Let $d$ denote its dimension.
    
    For a finite subset $T \sub \Places(E)$,
    there is a homomorphism
    \[
        \pi_T:\dirlim_{K \sub L \sub E}\Op_{\q \in \Places(L)}\cH_{L,\q}
        \to \Op_{\fk{Q} \in T} \cH_{E,\fk{Q}}
    \]
    induced by the restrictions.
    Since the restriction map $T \to \Places(L)$
    is injective for sufficiently large $L$,
    and since $\dirlim_{K \sub L \sub E}\cH_{L,\fk{Q}|_L} \iso \cH_{E, \fk{Q}}$,
    the map $\pi_T$ is surjective.
    In particular, we find that
    $\Op_{\fk{Q} \in T}\cH_{E, \fk{Q}}$
    has dimension at most $d$ for every finite $T \sub \Places(E)$,
    and therefore the same holds for the infinite direct sum
    $\Op_{\fk{Q} \in \Places(E)}\cH_{E, \fk{Q}}$.
    
    It follows in particular that there is a place $\q \in \Places(K) \setminus S(K)$ 
    such that $\cH_{E, \fk{Q}} = 0$ 
    for every $\fk{Q} \in \Places_\q(E)$.
    By \cref{lem:H2 of infinite local field} we see that $\ell^\infty \divides [E_\fk{Q} : K_\q]$
    for every $\fk{Q} \in \Places_\q(E)$,
    and by \cref{lem:degree locally implies globally} this implies $\ell^\infty \divides [E:K]$.
    It follows that $\dirlim_{K \sub L \sub E}\Z/\ell\Z$ vanishes.
    Therefore, $d = 1$. 
    
    Assume, for the sake of contradiction, that $\cH_{E, \fk{P}}$ vanishes for all $\fk{P} \in \Places(E)$.
    We will prove that the direct sum 
    $\dirlim_{K \sub L \sub E}\Op_{\q \in \Places(L)}\cH_{L, \q}$
    also vanishes.
    Let $L/K$ be a finite subextension of $E/K$,
    let $\q \in \Places(L)$ be any place,
    and let $\al \in \cH_{L,\q}$.
    For any finite subextension $L'/L$ of $E/L$,
    consider the subset $U_{L'} \sub \Places_\q(E)$
    consisting of places $\fk{Q}$ such that $\al$ is annihilated
    by the restriction map to $\cH_{L', \fk{Q}|_{L'}}$.
    Those are open subsets, and they form an open cover of $\Places_\q(E)$
    since $\dirlim_{K \sub L' \sub E}\cH_{L',\fk{Q}|_{L'}} \iso \cH_{E,\fk{Q}} = 0$.
    We have $U_{L_1'} \cup U_{L_2'} \sub U_{L_1'L_2'}$,
    and it follows by compactness of $\Places_\q(E)$ 
    that there exists a finite subextension $L'/L$ of $E/L$ 
    such that $\al$ vanishes under the restriction maps to $\Op_{\fk{Q} \in \Places_\q(L')}\cH_{L', \fk{Q}}$.
    This implies that
    $\dirlim_{K \sub L \sub E}\Op_{\q \in \Places(L)}\cH_{L, \q}$
    vanishes, in contradiction to $d = 1$.

    We find that there is a unique $\fk{P}_E \in \Places(E)$ such that 
    $\cH_{E, \fk{P}_E} \ne 0$.
    Note that for any finite subextension $E'/E$ of $\Om/E$,
    the same argument applied to $H' = \G{\Om/E'}$ 
    shows that there is a unique $\fk{P}_{E'} \in \Places(E')$
    such that $\cH_{E', \fk{P}_{E'}} \ne 0$.

    If it were the case that $\fk{P}_E \in \Places(E) \setminus S(E)$,
    then \cref{lem:O/ell=0 outside S} and \cref{cor:H2 unchanged by finite subextension}
    would imply that any place $\tld{\fk{P}_E} \in \Places_{\fk{P}_E}(E')$
    satisfies $\cH_{E', \tld{\fk{P}_E}} \ne 0$.
    By uniqueness of $\fk{P}_{E'}$,
    it follows that $\fk{P}_E$ is indecomposable in $E'$
    for every finite $E \sub E' \sub \Om$,
    and thus in $\Om$.
    This finishes the proof 
    for the case $\mu_\ell \sub K$, $\mu_\ell \sub F$, 
    and $\fk{P}_E \in \Places(E) \setminus S(E)$.

    Now, suppose that $\fk{P}_E \in S(E)$.
    It follows that $\fk{P}_{E'} \in S(E')$ for every finite subextension $E'/E$ of $\Om/E$,
    for otherwise \cref{lem:O/ell=0 outside S} and \cref{cor:H2 unchanged by finite subextension}
    imply that $\cH_{E,\fk{P}_{E'}|_E} \ne 0$,
    contradicting the uniqueness of $\fk{P}_E$.
    
    For $E'/E$ a finite Galois subextension of $\Om/E$,
    and any two places $\fk{P}_1,\fk{P}_2$ of $E'$
    that lie over $\fk{P}_{E'}|_E$,
    we have $\G{\Om^{\fk{P}_1}/E'_{\fk{P}_1}} \iso \G{\Om^{\fk{P}_2}/E'_{\fk{P}_2}}$
    (as isomorphism types of groups) 
    since these are conjugate in $\G{\Om/E}$.
    It follows that $\cH_{E',\fk{P}_1} \iso \cH_{E',\fk{P}_2}$.
    However, $\cH_{E',\fk{P}}$
    is nonzero for $\fk{P} = \fk{P}_{E'}$
    and zero for any other place lying over $\fk{P}_{E'}|_E$.
    It follows that $\fk{P}_{E'}|_E$ is indecomposable in $E'$.

    We have thus shown that for every finite Galois subextension $E'/E$ of $\Om/E$
    there is a place in $S(E)$ that is indecomposable in $E'$.
    This property follows for arbitrary finite subextensions as well,
    by considering their Galois closures.
    It follows from \cref{prop:infinite indecomposable place} that there is a place in $S(E)$ which is indecomposable in $\Om$,
    finishing the proof in the case where
    $\mu_\ell \sub K$ and $\mu_\ell \sub F$.
    
    Now we reduce the general case to the one just proven.
    Let $K' = K(\mu_\ell)$. Then $\G{\Om/K'}$ is an open subgroup of $\G{\Om/K}$.
    Thus, there is a finite subextension $\tld{F}/F$ of $\Xi/F$
    such that $\G{\Xi/\tld{F}} \iso H \cap \G{\Om/K'}$.
    Let $F' = \tld{F}(\mu_\ell)$. 
    Then $\G{\Xi/F'}$ is isomorphic to a subgroup $H' \le \G{\Om/K'}$ which is an open subgroup of $H$.
    By the case just proven, there is a place $\fk{P}$ of $\Om$ such that $H' \le \G{\Om/K', \fk{P}}$.
    
    Let $h \in H$. Both $H'$ and $\cnj{h}{H'}$ are open subgroups of $H$,
    so $H' \cap \cnj{h}{H'}$ is also an open subgroup of $H$.
    In particular it has finite index in $H$,
    and therefore $\ell^\infty \divides \card{H' \cap \cnj{h}{H'}}$.
    Since $H' \sub \G{\Om/K', \fk{P}}$ and $\cnj{h}{H'} \sub \G{\Om/K', h(\fk{P})}$,
    \cref{prop:local groups ell-disjoint}
    implies $h(\fk{P}) = \fk{P}$.
    Since this holds for every $h \in H$, we have $H \le \G{\Om/K, \fk{P}}$ as required.
\end{proof}

\section{Characteristic Detection}\label{sec:char-detect}

\begin{defn}\label{def:abundance}
    We say that an algebraic extension $\Xi/\Q_p$
    is abundant if it contains either $\tm{\Q_p}$ or the $\Z_p^2$-extension of $\Q_p$.
    We say that an algebraic extension $\Om/\Q$ 
        is abundant if, 
        for all rational primes $p$
        outside a set of density zero,
        we have that for all $\fk{P} \in \Places_p(\Om)$,
        the field $\Om_\fk{P}$ is abundant.
\end{defn}

This definition is motivated by the fact that,
for a finite extension $F/\Q_p$
and a Galois extension $\Xi/F$ which is 
abundant,
the group $\G{\Xi/F}$
cannot be embedded into any Galois group of the form 
$\G{\Xi'/F'}$ for $F'$ a local field with a different residue characteristic.
In particular, the group $\G{\Xi/F}$ determines $p$.
This is \cref{prop:local characteristic detection}.

\begin{ex}\label{ex:abundance-examples}
    The fields $\Q(\mu_\infty)$ and $\tm{\Q}$ are abundant
    (and so are algebraic extensions of them).
\end{ex}

\begin{proof}
    The $\Z_p^2$-extension of $\Q_p$ is the compositum of the unramified $\Z_p$-extension
    and the totally wildly ramified $\Z_p$-extension arising from $\mu_{p^\infty}$.
    Both of these extensions are contained in $\Q_p(\mu_\infty)$,
    showing that $\Q(\mu_\infty)$ is abundant.
    To show that $\tm{\Q}$ is abundant, we will show that for every $\fk{P} \in \Places_\fin(\tm\Q)$
    with residue characteristic $p$,
    we have $(\tm{\Q})_\fk{P} \iso \tm{\Q_p}$.
    This follows from the next lemma.
    \end{proof}

\begin{lem}
    Let $K$ be a number field, let $\p \in \Places(K)$,
    and let $F/K_\p$ be a finite tamely ramified extension of degree $d$.
    Then there is a finite tamely ramified extension $L/K$
    of degree $d$,
    in which $\p$ is indecomposable,
    such that there is an isomorphism $F \iso L^\p$
    of $K_\p$-algebras.
    Here $L^\p$ denotes the completion of $L$ at the unique place lying over $\p$.
\end{lem}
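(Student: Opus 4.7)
The plan is to realize $F$ as the $\p$-adic completion of a number field defined by a single polynomial equation, using Krasner's Lemma to control the local behavior at $\p$ and weak approximation to control ramification at the few primes of $K$ that could produce wild ramification. Concretely, I would write $F = K_\p(\al)$ and let $P(x) \in K_\p[x]$ be the minimal polynomial of $\al$, which is monic of degree $d$. The aim is to produce a monic $Q(x) \in K[x]$ of degree $d$ that is $\p$-adically very close to $P$, so that Krasner's Lemma forces $Q$ to be irreducible over $K_\p$ with a root $\beta$ satisfying $K_\p(\beta) = F$.

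To also achieve global tameness, let $T \sub \Places_\fin(K)$ be the finite set of primes of $K$ lying above a rational prime dividing $d$, and for each $\q \in T \setminus \{\p\}$ prescribe a monic polynomial $Q_\q \in \cO_K[x]$ of degree $d$ whose reduction modulo $\q$ is separable. The uniform choice $Q_\q(x) = x^d - x$ works: its derivative equals $-1$ whenever the residue characteristic divides $d$ (which then cannot divide $d-1$), so the reduction is separable. By weak approximation applied coefficient-wise, one obtains $Q \in K[x]$ monic of degree $d$ that is $\p$-adically arbitrarily close to $P$ and congruent to $Q_\q$ modulo any prescribed power of $\q$, for each $\q \in T \setminus \{\p\}$.

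Krasner's Lemma, for a sufficiently tight $\p$-adic approximation, shows that $Q$ is irreducible over $K_\p$, hence over $K$, and that any root $\beta$ of $Q$ satisfies $K_\p(\beta) = F$. Setting $L = K[x]/(Q)$ yields a number field of degree $d$; irreducibility of $Q$ over $K_\p$ gives a unique place $\fk{P}$ of $L$ above $\p$, with completion $L^\p = L_\fk{P} \iso K_\p[x]/(Q) \iso F$ as $K_\p$-algebras, so $\p$ is indecomposable in $L$ and the local isomorphism is as required.

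For tameness of $L/K$ at every non-Archimedean place: at $\p$ it is inherited from $F/K_\p$; at each $\q \in T \setminus \{\p\}$ the separable reduction of $Q$ mod $\q$ combined with Hensel's Lemma factors $Q$ in $\cO_{K_\q}[x]$ into monic pieces with pairwise coprime irreducible reductions, making $L \otimes_K K_\q$ a product of unramified local extensions of $K_\q$; and at any prime $\q \notin T$ the residue characteristic is coprime to $d$, so every ramification index in $L/K$ at $\q$ divides $d$ and is automatically coprime to the residue characteristic. The step requiring the most care is checking that the Krasner approximation can be enforced simultaneously with the prescribed reductions at the finitely many bad primes, which is exactly what weak approximation delivers.
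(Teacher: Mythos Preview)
Your overall strategy---approximate the minimal polynomial of a primitive element via Krasner/weak approximation and impose local conditions at the bad primes---matches the paper's. However, your tameness argument at the uncontrolled primes has a genuine gap. You assert that at any $\q \notin T$ ``every ramification index in $L/K$ at $\q$ divides $d$,'' but this is false for non-Galois extensions: one only has $\sum_i e_i f_i = d$, so an individual $e_i$ can be any positive integer up to $d$, not merely a divisor of $d$. For instance, with $d=4$ and a prime $\q$ of residue characteristic $3$ (so $\q\notin T$), the polynomial $Q$ could factor over $K_\q$ as a linear factor times an irreducible cubic generating a totally (hence wildly) ramified extension, giving $e=3$ at that place. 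Since weak approximation imposes no condition at such $\q$, nothing in your construction rules this out.

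The paper fixes exactly this point in two coupled ways. First, it takes the bad set to be all rational primes $\le d$ (not just those dividing $d$), and at every $\q$ above such a prime (other than $\p$) forces $L\otimes_K K_\q$ to be the unramified degree-$d$ extension. Second, it passes to the Galois closure $\tld L/K$: now the degree divides $d!$, and for a Galois extension the ramification index at any place does divide the degree; since primes of residue characteristic $>d$ do not divide $d!$, tameness there is automatic, while tameness at the controlled primes is inherited from $L$ via the fact that a compositum of tamely ramified extensions is tamely ramified. Your argument can be repaired along the same lines: enlarge $T$ to the primes of $K$ above all rational primes $\le d$, and deduce tameness outside $T$ only after passing to the Galois closure.
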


\begin{proof}
    If $\p$ is a complex Archimedean place,
    we can take $L = K$.
    If $\p$ is a real Archimedean place,
    we can take $L = K(\mu_3)$.
    Thus we may assume that $\p$ is a non-Archimedean place.
    By the Primitive Element Theorem, there is $\al \in F$ such that $F = K_\p(\al)$.
    Let $f_\p \in K_\p[X]$ be the minimal polynomial of $\al$,
    so that $\deg(f_\p) = d$ and $F \iso K_\p[X]/(f_\p)$. 
    Let $S$ be the set of rational primes less than or equal to $d$.
    For every prime $\q \in S(K) \setminus \{\p\}$,
    let $f_\q \in K_\q[X]$ be a monic irreducible polynomial of degree $d$
    such that $K_\q[X]/(f_\q)$
    is isomorphic to the unramified extension of $K_\q$ of degree $d$.
    By the Chinese Remainder Theorem,
    there is a monic polynomial $f \in K[X]$ of degree $d$
    that is arbitrarily close to $f_\q$ in $K_\q[X]$
    for every $\q \in S(K)$.
    By Krasner's lemma,
    we may assume that $f$ is irreducible in $K_\q[X]$
    and that $K_\q[X]/(f) \iso K_\q[X]/(f_\q)$
    for every $\q \in S(K)$.
    Let $L = K[X]/(f)$.
    Then $L/K$ is a degree $d$ extension
    in which $\p$ is indecomposable
    with $L^\p \iso F$,
    and moreover, $L/K$
    is unramified (and in particular tamely ramified)
    at every prime of $S(K) \setminus \{\p\}$.
    Consider $\tld{L}$, the Galois closure of $L/K$.
    Since the compositum of two tamely ramified extensions is tamely ramified,
    it follows that $\tld{L}/K$ is tamely ramified at $S(K)$.
    However, $\tld{L}/K$ is a Galois extension of degree dividing $d!$,
    so it is tamely ramified at all prime numbers not dividing $d!$,
    which are exactly those outside $S(K)$.
    This shows that $\tld{L}/K$ is tamely ramified, and thus $L/K$ is tamely ramified as well.
    This completes the proof.
\end{proof}

\begin{rmk}
    The fields $\Om$ in \cref{ex:abundance-examples} satisfy the property that $\Om_\fk{P}$
    is abundant for every $\fk{P} \in \Places_\fin(\Om)$,
    even though the definition only requires this for $\fk{P}$
    lying over a set of rational primes with density one.
\end{rmk}

The next lemma collects standard facts about
$p$-Sylow subgroups of profinite groups.
\begin{lem}\label{lem:pro-p lift}
    Let $\pi : G' \to G$ be a surjection of profinite groups.
    Let $p$ be a prime number.
    \begin{enumerate}
        \item If $P \le G$ is a pro-$p$ subgroup,
        then there exists a pro-$p$ subgroup $P' \le G'$ such that $\pi(P') = P$.
        \item If $P' \le G'$ is a $p$-Sylow subgroup
        then $\pi(P') \le G$ is a $p$-Sylow subgroup.
        \item If $p \ndiv \card{\ker(\pi)}$ then the $p$-Sylow subgroups of $G'$ 
        are isomorphic to those of $G$.
    \end{enumerate}
\end{lem}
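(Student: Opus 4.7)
The plan is to reduce each of the three parts to standard Sylow theory for profinite groups, where a $p$-Sylow of a profinite group $G$ is a closed pro-$p$ subgroup whose supernatural index in $G$ is coprime to $p$; such subgroups exist and are pairwise conjugate.

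I would start with part (2), which is essentially a computation of indices. The image $\pi(P')$ is pro-$p$ as a continuous image of a pro-$p$ group, so it remains to check that its index is coprime to $p$. The identities $\card{G'} = \card{G} \cdot \card{\ker(\pi)}$ and $\card{P'} = \card{\pi(P')} \cdot \card{\ker(\pi) \cap P'}$ yield the supernatural equality
\[
    [G' : P'] = [G : \pi(P')] \cdot [\ker(\pi) : \ker(\pi) \cap P'].
\]
Since $P'$ is a $p$-Sylow of $G'$, the left-hand side is coprime to $p$, so $[G : \pi(P')]$ is too, and $\pi(P')$ is therefore a $p$-Sylow of $G$.

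For part (1), I would apply part (2) to the restricted surjection $\pi \colon \pi\inv(P) \to P$. Any $p$-Sylow $P'$ of the closed subgroup $\pi\inv(P)$ (such exists by the profinite Sylow theorem) has image $\pi(P')$ that is a $p$-Sylow of $P$; but $P$ is itself pro-$p$, so its only $p$-Sylow is $P$ itself, and hence $\pi(P') = P$.

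For part (3), let $P'$ be a $p$-Sylow of $G'$. By part (2), the restriction $\pi|_{P'} \colon P' \to \pi(P')$ is a surjection onto a $p$-Sylow of $G$, and its kernel $P' \cap \ker(\pi)$ is a closed pro-$p$ subgroup of $\ker(\pi)$. The hypothesis $p \ndiv \card{\ker(\pi)}$ says that $\ker(\pi)$ has only the trivial pro-$p$ subgroup, so $\pi|_{P'}$ is an isomorphism. Since any two $p$-Sylows of a profinite group are conjugate, and thus isomorphic, this yields the claim for arbitrary $p$-Sylows of $G'$ and $G$. There is no real obstacle here; the entire lemma is a transparent repackaging of profinite Sylow theory.
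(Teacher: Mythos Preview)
Your argument is correct and essentially the same as the paper's: both reduce everything to the existence and conjugacy of $p$-Sylow subgroups in profinite groups and a supernatural-index computation showing $[G:\pi(P')]$ divides $[G':P']$. The only cosmetic difference is that you prove part~(2) first and then deduce part~(1) by applying it to $\pi\inv(P)\to P$, whereas the paper treats the two parts in the stated order with the same divisibility observation; part~(3) is identical in both.
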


\begin{proof}
    By \cite[2.3.6]{profinite groups ribes zalesskii},
    any profinite group has a $p$-Sylow subgroup,
    and the $p$-Sylow subgroups are all conjugate,
    so the isomorphism type of a $p$-Sylow subgroup is well-defined.

    For the first part, let $P'$ be a $p$-Sylow subgroup of $\pi\inv(P)$.
    Since $[P : \pi(P')] \divides [\pi\inv(P) : P']$ and $p \ndiv [\pi\inv(P) : P']$,
    we find that $p \ndiv [P : \pi(P')]$, so $\pi(P') = P$.
    For the second part, similarly, we have $[G : \pi(P')] \divides [G' : P']$ and $p \ndiv [G' : P']$,
    so that $p \ndiv [G : \pi(P')]$ and $\pi(P') \le G$ is a $p$-Sylow subgroup.
    For the third part, let $P' \le G'$ be a $p$-Sylow subgroup.
    Then $\pi|_{P'}:P' \to \pi(P')$ is a surjection from a $p$-Sylow subgroup of $G'$
    to a $p$-Sylow subgroup of $G$. However, $\ker(\pi|_{P'}) = \ker(\pi) \cap P'$
    is trivial by $p \ndiv \card{\ker(\pi)}$, so $\pi|_{P'}$ is actually an isomorphism.
\end{proof}

For a finite extension $F/\Q_p$
with a Galois extension $\Xi/F$,
we define $I_{\Xi/F} = \G{\Xi/\Xi \cap \un F}$ (the inertia subgroup)
and $W_{\Xi/F} = \G{\Xi/\Xi \cap \tm F}$ (the wild ramification subgroup).

Suppose $r$ is a prime number, $n$ is a positive integer
not divisible by $r$,
and $Z$ is a procyclic group
with a distinguished topological generator $1 \in Z$
(for instance $\hat\Z$ or $\Z_p$),
such that the order of $n$ in $\Z_r^\x \iso \Aut(\Z_r)$ divides $\card{Z}$. 
Then we write $\Z_r \rtimes_n Z$ for the semidirect product
where $1 \in Z$ acts on $\Z_r$ by multiplication with $n$.

\begin{lem}\label{lem:Zr semidirect structure}
    Let $F$ be a finite extension of $\Q_p$ with residue field of size $q$,
    let $\Xi/F$ be a Galois extension and let $r \ne p$ be a prime number. Then:
    \begin{enumerate}
        \item 
        The $r$-Sylow subgroups of $I_{\ov F/F}$ are isomorphic to $\Z_r$
        and the $r$-Sylow subgroups of $\G{\ov F/F}$
        are isomorphic to $\Z_r \rtimes_{q^{r-1}} \Z_r$.
        \item If $\Xi$ contains $\tm{\Q_p}$
        then there is an injection $\Z_r \rtimes_q \hat\Z \to \G{\Xi/F}$.
        \item If $n > 1$ is an integer coprime to $r$ such that there is an injection $\Z_r \rtimes_n \hat\Z \to \G{\Xi/F}$,
        then there is $u \in \hat\Z$
        such that $n = q^u$ in $\Z_r$.
    \end{enumerate}
\end{lem}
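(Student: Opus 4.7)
The strategy is to exploit the structure of the tame quotient $\G{\tm F/F} \iso \hat\Z^{(p')} \rtimes_q \hat\Z$, in which Frobenius acts on tame inertia by multiplication by $q$, together with the fact that wild inertia is pro-$p$ and hence invisible at the prime $r \ne p$.

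For part (1), since $r \ne p$, taking $r$-Sylow subgroups commutes with passing to the tame quotient, so the $r$-Sylow of $I_{\ov F/F}$ is the $r$-part of $\hat\Z^{(p')}$, namely $\Z_r$. The full $r$-Sylow of $\G{\ov F/F}$ is then $\Z_r \rtimes \Z_r$, and to identify the action I would use the decomposition $\Z_r^\x = \mu_{r-1} \x (1 + r\Z_r)$ (together with its $r = 2$ analogue) to see that the map $\hat\Z \to \Z_r^\x$, $a \mapsto q^a$, factors through $\hat\Z \twoheadrightarrow \Z/(r-1) \x \Z_r$; restricting to the $r$-Sylow $\Z_r \sub \hat\Z$ kills the $\mu_{r-1}$ factor and leaves the topological generator acting by $q^{r-1} \in 1 + r\Z_r$. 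This yields the structure $\Z_r \rtimes_{q^{r-1}} \Z_r$ as claimed.

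For part (2), under $\tm{\Q_p} \sub \Xi$ we have $\un F = F \cdot \un{\Q_p} \sub \Xi$ and $M := \un F(p^{1/r^\infty}) \sub \Xi$; a direct Kummer-theoretic computation (using $\mu_{r^\infty} \sub \un F$ and that Frobenius acts on $\mu_{r^\infty}$ by the $q$-th power) identifies $\G{M/F} \iso \Z_r \rtimes_q \hat\Z$. To promote the surjection $\G{\Xi/F} \twoheadrightarrow \G{M/F}$ to a genuine embedding, I would construct concrete elements $\tld\s, \tld\tau \in \G{\Xi/F}$: take $\tld\s$ to be any lift of the Frobenius of $\un F/F$, and $\tld\tau$ a pro-$r$ lift of the Kummer generator of $\G{M/\un F}$, adjusted within its coset modulo $\G{\Xi/M}$ so that the commutator relation $\tld\s \tld\tau \tld\s^{-1} = \tld\tau^q$ holds in $\G{\Xi/F}$. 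The map $(a,b) \mapsto \tld\tau^a \tld\s^b$ then gives the embedding, with injectivity immediate by projecting to $\G{\un F/F} = \hat\Z$.

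For part (3), given an injection $\phi: \Z_r \rtimes_n \hat\Z \to \G{\Xi/F}$ with $\tau := \phi(1, 0)$ and $\s := \phi(0, 1)$, I would pass to the tame quotient $\G{\Xi \cap \tm F/F}$, which is itself a quotient of $\hat\Z^{(p')} \rtimes_q \hat\Z$; the image of $\tau$ still has infinite pro-$r$ order since wild inertia is pro-$p$ with $r \ne p$. By working with lifts in $\hat\Z^{(p')} \rtimes_q \hat\Z$ and analyzing how the conjugation relation $\s \tau \s^{-1} = \tau^n$ constrains the inertia and Frobenius components, one shows that $\tau$ must be representable by an element of the $\Z_r$-inertia factor on which $\s$ acts through its Frobenius exponent $u \in \hat\Z$ by multiplication by $q^u$, forcing $n = q^u$ in $\Z_r$. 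The main obstacle I expect is the commutator-adjustment step in part (2), which may require a cohomological splitting argument or a direct cocycle computation on an abelian subquotient of $\G{\Xi/M}$.
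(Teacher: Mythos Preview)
Your plan for part~(1) is correct and matches the paper's argument in substance: pass to the tame quotient (wild inertia is pro-$p$ and hence invisible at $r$), read off the $r$-Sylow of inertia as $\Z_r$, and identify the action of the $r$-Sylow of $\hat\Z$ as $q^{r-1}$.

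In part~(2) there is a real gap at exactly the point you flag. Your plan is to adjust $\tld\tau$ inside its $\G{\Xi/M}$-coset so that $\cnj{\tld\s}{\tld\tau} = \tld\tau^q$ holds on the nose; but $\G{\Xi/M}$ is not abelian in general, so this is not a cocycle problem on an abelian module, and worse, any such adjustment may destroy the property that $\tld\tau$ is pro-$r$, without which $a \mapsto \tld\tau^a$ for $a \in \Z_r$ is not even defined. The paper sidesteps this entirely by adjusting $\s$ rather than $\tau$. Concretely: take $R$ an $r$-Sylow of $I_{\Xi/F}$ (which one shows is $\iso \Z_r$ using that the restriction map $\phi_I\colon I_{\Xi/F} \to I_{\tm{\Q_p}/\Q_p}$ has open image), let $\tau$ generate $R$, and let $\s$ be any Frobenius lift. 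Then $R$ and $\cnj{\s}{R}$ are two $r$-Sylows of $I_{\Xi/F}$, so there is $x \in I_{\Xi/F}$ with $\cnj{x\s}{R} = R$; setting $\s' = x\s$ one has $\cnj{\s'}{\tau} \in R$, and since $\phi_I$ is injective on $R$ and satisfies $\phi_I(\cnj{\s'}{\tau}) = \phi_I(\tau)^q$, the relation $\cnj{\s'}{\tau} = \tau^q$ follows. No cohomology is needed.

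In part~(3) your outline is in the right direction but suppresses a step that is not automatic: you need $\tau$ to lie in $I_{\Xi/F}$ before you can compare $n$ with a power of $q$ in tame inertia, and the given injection gives no such guarantee. The paper handles this by observing that the image of $\tau$ in the abelian quotient $\G{\Xi/F}/I_{\Xi/F}$ satisfies $j(\tau) = j(\tau)^n$, whence $j(\tau^{n-1}) = e$; one then replaces the original injection by the one sending $a \mapsto \tau^{n-1}$, $b \mapsto \s$ (still injective since $n > 1$ in $\Z_r$), achieving $\tau \in I_{\Xi/F}$ without loss. After that, writing $\s\s_0^{-u} \in I_{\Xi/F}$ for a Frobenius lift $\s_0$ and passing to $\G{\Xi/F}/W_{\Xi/F}$ (where tame inertia is abelian and $\pi$ is injective on the pro-$r$ group $\ang{\tau}$) yields $\tau^n = \tau^{q^u}$ and hence $n = q^u$ in $\Z_r$.
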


\begin{proof}
    For the first part,
    note that the wild ramification subgroup $W_{\ov F/F}$,
    contained in $I_{\ov F/F}$ and normal in $\G{\ov F/F}$,
    is a pro-$p$ group.
    Thus, from \cref{lem:pro-p lift} it follows that
    the $r$-Sylow subgroups of $I_{\ov F/F}$ and $\G{\ov F/F}$ 
    are isomorphic to those of $I_{\tm F/F}$ and $\G{\tm F/F}$ respectively.

    Since $I_{\tm F/F} \iso \prod_{r' \ne p} \Z_{r'}$,
    the $r$-Sylow subgroups of $I_{\tm F/F}$ are isomorphic to $\Z_r$.
    Further, the subgroup $N \le \G{\tm F / \un F}$ corresponding to $\prod_{r' \ne p,r}\Z_{r'}$
    is normal in $\G{\tm F/F}$
    and satisfies $r \ndiv \card{N}$. 
    By \cref{lem:pro-p lift}, the $r$-Sylow subgroups of $\G{\tm F/F}$
    are isomorphic to those of $\G{\tm F/F}/N \iso \Z_r\rtimes_q\hat\Z$.
    Let $\tau,\s \in \Z_r\rtimes_q\hat\Z$
    be topological generators of the $\Z_r$ part and the $\hat\Z$ part respectively,
    with $\cnj{\s}{\tau} = \tau^q$.
    Let $u \in \hat\Z$ be the profinite integer which is equivalent to $r - 1$ in $\Z_r$
    and to $0$ in $\Z_{r'}$ for every prime $r' \ne r$.
    Then $\s^u$ generates the $r$-Sylow subgroup of $\hat\Z$,
    and since $u \equiv r - 1 \pmod{(r-1)r^\infty}$
    (which is the order of $\Z_r^\x$)
    we have $\cnj{\s^u}{\tau} = \cnj{\s^{r - 1}}{\tau} = \tau^{q^{r-1}}$.
    It follows that $\ang{\tau, \s^u} \iso \Z_r\rtimes_{q^{r - 1}}\Z_r$.
    Since this is a normal subgroup of $\Z_r\rtimes_q\hat\Z$
    and the quotient by it is coprime to $r$,
    it is the unique $r$-Sylow subgroup.
    This proves the first part.
    
    For the second part,
    consider the commutative diagram
    \[
        \xymatrix{
            0 \ar[r] & I_{\Xi/F} \ar[r]\ar[d]^{\phi_I} & \G{\Xi/F} \ar[r]\ar[d]^{\phi_G} & \hat\Z \ar[r]\ar@{_(->}[d]^{\cdot  f} & 0\\
            0 \ar[r] & I_{\tm{\Q_p}/\Q_p} \ar[r] & \G{\tm{\Q_p}/\Q_p} \ar[r] & \hat\Z \ar[r] & 0
        }
    \]
    where $f = f_{F/\Q_p} = \log_pq$,
    and $\phi_I,\phi_G$ have open images
    (of indices $e_{F \cap \tm{\Q_p}/\Q_p}$ and $[F \cap \tm{\Q_p} : \Q_p]$ respectively).

    Let $R \le I_{\Xi/F}$ be an $r$-Sylow subgroup.
    Consider the surjection $I_{\ov F/F} \to I_{\Xi/F}$.
    The $r$-Sylow subgroup of $I_{\ov F/F}$
    is isomorphic to $\Z_r$,
    so by \cref{lem:pro-p lift}, $R$ is isomorphic to a quotient of $\Z_r$.
    However, the $r$-Sylow subgroup of $I_{\tm{\Q_p}/\Q_p}$ is also isomorphic to $\Z_r$,
    so $\phi_I(R)$ is isomorphic to an open subgroup of $\Z_r$.
    This shows that $R \iso \Z_r$.

    Let $\s \in \G{\Xi/F}$ be a preimage of the Frobenius element $1 \in \hat\Z$
    and let $\tau$ be a generator of $R$.
    Note that $\phi_I(\cnj{\s}{\tau}) = \phi_I(\tau)^q$.
    Since $R, \cnj\s{R}$ are two $r$-Sylow subgroups of $I_{\Xi/F}$,
    there is $x \in I_{\Xi/F}$
    such that $\cnj{x\s}{R} = R$.
    Denote $\s' = x\s$.
    Then we have $\cnj{\s'}{\tau} \in R$ and also $\phi_I(\cnj{\s'}{\tau}) = \phi_I(\tau)^q$.
    Since $R \iso \Z_r$
    we find that $\phi_I$ is injective on $R$,
    implying that $\cnj{\s'}{\tau} = \tau^q$.
    Therefore, $\ang{\tau, \s'} \iso \Z_r \rtimes_q \hat\Z$,
    proving the second part.

    For the third part,
    suppose that $\Z_r \rtimes_n \hat\Z \iso \ang{a, b \mid a^{r^\infty} = e, \cnj{b}{a} = a^n}$
    injects into $\G{\Xi/F}$, and denote the images of $a$, $b$
    in $\G{\Xi/F}$ by $\tau$, $\s$ respectively.

    Consider the short exact sequence $0 \to I_{\Xi/F} \to \G{\Xi/F} \arw j \G{\Xi/F}/I_{\Xi/F} \to 0$.
    Since the codomain of $j$ is abelian,
    we have $j(\tau) = j(\cnj{\s}{\tau}) = j(\tau)^n$,
    implying that $j(\tau^{n - 1}) = e$.
    Note that there is another injection $\Z_r \rtimes_n \hat\Z \to \G{\Xi/F}$,
    sending $a \mapsto \tau^{n - 1}$ and $b \mapsto \s$.
    Thus, we may assume without loss of generality that $\tau \in I_{\Xi/F}$.
    
    Let $\s_0 \in \G{\Xi/F}$ denote a preimage of the Frobenius element of $\G{\Xi/F}/I_{\Xi/F}$.
    Then there is $u \in \hat\Z$ such that $\s \cdot \s_0^{-u} \in I_{\Xi/F}$.
    Consider the quotient map $\pi:\G{\Xi/F} \to \G{\Xi/F}/W_{\Xi/F}$.
    Note that $\pi(\cnj{\s_0}{x}) = \pi(x)^q$ for every $x \in I_{\Xi/F}$,
    and therefore $\pi(\tau)^n = \pi(\cnj\s{\tau}) = \pi(\cnj{(\s \s_0^{-u}) \s_0^u}{\tau}) =  \pi(\tau)^{q^u}$.
    Since the kernel of $\pi$ is a pro-$p$ group,
    its restriction to $\ang{\tau}$ is injective,
    meaning that $\tau^n = \tau^{q^u}$,
    and thus $n = q^u$ in $\Z_r$.
\end{proof}

\begin{prop}\label{prop:local characteristic detection}
    Let $F,F'$ be non-Archimedean local fields of characteristic zero,
    and let $\Xi/F, \Xi'/F'$ be Galois extensions, with $\Xi$ abundant.
    Suppose there is a profinite group embedding $\al:\G{\Xi/F} \to \G{\Xi'/F'}$.
    Then $F$ and $F'$ have the same residue characteristic.
\end{prop}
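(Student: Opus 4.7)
The plan is to argue by contradiction, assuming the residue characteristics $p, p'$ of $F, F'$ differ, and splitting according to which condition of \cref{def:abundance} holds for $\Xi/\Q_p$.

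Suppose first that $\Xi \supseteq \tm{\Q_p}$. For every prime $r \ne p$, \cref{lem:Zr semidirect structure}(2) yields an injection $\Z_r \rtimes_q \hat\Z \hookrightarrow \G{\Xi/F}$, and composing with $\al$ gives an injection $\Z_r \rtimes_q \hat\Z \hookrightarrow \G{\Xi'/F'}$. For primes $r$ with $r \ne p, p'$ (so that $n = q$ is coprime to $r$ and the condition $r \ne p'$ required on the $F'$-side is met), \cref{lem:Zr semidirect structure}(3) produces $u \in \hat\Z$ with $q = q'^u$ in $\Z_r^\x$; in particular, $q$ lies in $\ang{q'} \pmod r$ in $\F_r^\x$. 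To violate this, I pick an odd prime $\ell > \max(p, p', f)$. Comparing $p$-adic valuations in a putative equation $q = q'^i z^\ell$ with $i \in \Z$, $z \in \Q^\x$ forces $\ell \mid f$, which fails by the choice of $\ell$; hence $q \notin \ang{q'} \cdot \Q^{\x\ell}$. Since $\Q(\mu_\ell)^{\x\ell} \cap \Q^\x = \Q^{\x\ell}$ for odd $\ell$, Kummer theory then gives that $L(q^{1/\ell})/L$ is a nontrivial $\Z/\ell\Z$-extension, where $L = \Q(\mu_\ell, q'^{1/\ell})$. By Chebotarev's density theorem, infinitely many rational primes $r$ split completely in $L$ while staying inert in $L(q^{1/\ell})/L$; such $r$ satisfy $r \equiv 1 \pmod \ell$ (hence $r > \ell > p, p'$), with $q'$ an $\ell$-th power modulo $r$ but $q$ not. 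Thus the order of $q$ in $\F_r^\x$ fails to divide that of $q'$, so $q \notin \ang{q'} \pmod r$, contradicting the constraint.

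Suppose instead that $\Xi \supseteq M$, the $\Z_p^2$-extension of $\Q_p$. Since $F/\Q_p$ is finite and every open subgroup of $\Z_p^2$ is isomorphic to $\Z_p^2$, we have $\G{FM/F} \iso \G{M/M \cap F} \iso \Z_p^2$; hence $\G{\Xi/F}$, and therefore $\al(\G{\Xi/F})$, admits a continuous surjection onto $\Z_p^2$. By \cref{lem:pro-p lift}(2), the $p$-Sylow $P$ of $\al(\G{\Xi/F})$ surjects onto $\Z_p^2$; on the other hand, $P$ is contained in the $p$-Sylow of $\G{\Xi'/F'}$, which by \cref{lem:Zr semidirect structure}(1) (applied with $r = p \ne p'$) and \cref{lem:pro-p lift}(2) is a quotient of $\Z_p \rtimes_{q'^{p-1}} \Z_p$. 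Lifting $P$ through this quotient gives a closed subgroup $\tld P \le \Z_p \rtimes_{q'^{p-1}} \Z_p$ surjecting onto $\Z_p^2$. However, since $q' = p'^{f'} \ge 2$ is a positive integer, $q'^{p-1} \ne 1$ in $\Z_p$, so $\Z_p \rtimes_{q'^{p-1}} \Z_p$ is a non-abelian $p$-adic analytic group of dimension $2$. Its closed subgroups of dimension $\le 1$ are abelian of $\Z_p$-rank $\le 1$, while its $2$-dimensional closed subgroups are open, hence isomorphic to $\Z_p \rtimes_b \Z_p$ with $b \ne 1$, whose abelianization is an extension of $\Z_p$ by a finite group and so has $\Z_p$-rank $1$. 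Neither class admits $\Z_p^2$ as a continuous quotient, a contradiction.

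The main obstacle is the first case: the individual constraints $q = q'^u$ in $\Z_r^\x$ are locally satisfiable (they merely assert that $q$ lies in the closed subgroup generated by $q'$), so a global arithmetic input is needed to exhibit a single prime $r$ at which the constraint fails; I supply this via Kummer theory and Chebotarev density.
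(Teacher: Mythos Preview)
Your proof is correct and follows the same strategy as the paper's: the same two-case split on the abundance condition, the same use of \cref{lem:Zr semidirect structure} together with a Kummer--Chebotarev construction in the tame case, and the same reduction in the $\Z_p^2$ case to showing that no closed subgroup of $\Z_p \rtimes_{q'^{p-1}} \Z_p$ surjects onto $\Z_p^2$. The only notable difference is in that last step, where you invoke the dimension theory of $p$-adic analytic groups, whereas the paper carries out an elementary generator--relation computation to show directly that any such subgroup has abelianization of $\Z_p$-rank at most one.
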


\begin{proof}
    Denote the residue characteristics of $F, F'$ by $p, p'$,
    and denote their residue field sizes by $q, q'$.
    Assume, for the sake of contradiction, that $p \ne p'$.
    
    First, suppose that $\Xi$ contains the $\Z_p^2$-extension $E/\Q_p$.
    Consider the natural continuous homomorphism $\phi:\G{EF/F} \to \G{E/\Q_p} \iso \Z_p^2$.
    Since $[F : \Q_p] < \infty$, it follows that $\phi$ has open image.
    As open subgroups of $\Z_p^2$ are isomorphic to $\Z_p^2$,
    we find that $\G{EF/F}$, and therefore $\G{\Om/F}$, has a quotient isomorphic to $\Z_p^2$.
    By \cref{lem:pro-p lift} it follows that there is a pro-$p$ subgroup $P_0 \le \G{\Xi/F}$
    with a quotient isomorphic to $\Z_p^2$.
    Applying \cref{lem:pro-p lift} again to $\al(P_0) \le \G{\Xi'/F'}$
    and to the surjection $\G{\ov{F'}/F'}\to\G{\Xi'/F'}$,
    we find that there is a pro-$p$ subgroup $P \le \G{\ov{F'}/F'}$
    with a quotient isomorphic to $\Z_p^2$.
    Let $\tld{P} \le \G{\ov{F'}/F'}$ be a $p$-Sylow subgroup containing $P$.
    Since $p \ne p'$,
    we have $\tld{P} \iso\Z_p \rtimes_{(q')^{p-1}} \Z_p$
    by the first part of \cref{lem:Zr semidirect structure}.
    
    Let $\tau, \s \in \tld{P}$ denote the generators corresponding to the semidirect product structure,
    so that 
    $\ang{\tau} \iso \ang{\s} \iso \Z_p$, $\cnj\s{\tau} = \tau^{(q')^{p-1}}$, $\ang{\tau, \s} = \tld{P}$ and $\ang{\tau} \cap \ang{\s} = \{e\}$.
    Consider the quotient map $\pi :\tld{P} \to \tld{P}/\ang{\tau}$.
    Since $P$ has a quotient isomorphic to $\Z_p^2$,
    it cannot be contained in the procyclic group $\ang{\tau}$,
    so $\pi(P) \ne \{e\}$.
    As $\tld{P}/\ang{\tau} = \ang{\pi(\s)} \iso \Z_p$,
    there is an integer $k \ge 0$ such that $\pi(P) = \ang{\pi(\s^{p^k})}$.
    Let $\Sigma \in P$ be a preimage of $\pi(\s^{p^k})$
    and let $T \in P$ be a generator of the procyclic group $P \cap \ang{\tau}$.
    Then we have $P = \ang{T, \Sigma}$ and $\cnj{\Sigma}{T} = T^{(q')^{(p - 1)p^k}}$.
    It follows that $P^\ab$
    is a quotient of $\Z_p/((q')^{(p - 1)p^k} - 1) \x \Z_p$.
    This group does not have a quotient isomorphic to $\Z_p^2$,
    which is a contradiction. This shows $p = p'$ in the case where $\Xi$ contains the $\Z_p^2$-extension of $\Q_p$.

    Now assume $\Xi$ contains $\tm{\Q_p}$.
    Let $\ell$
    be a prime number not dividing
    $p \cdot \log_pq$.
    Consider the number fields
    $K = \Q(\mu_\ell, (q')^\frac1\ell)$ and $L = \Q(\mu_\ell, (q')^\frac1\ell, q^\frac1\ell)$.
    Note that $L$ is ramified at $p$ while $K$ is not, and therefore $K \subsetneq L$.
    Since $K$ and $L$ are Galois over $\Q$, Chebotarev's Density Theorem implies there are infinitely many rational primes
    that split completely in $K$ but not in $L$.
    Take such a prime $r \ne p, p'$.
    The splitness condition is equivalent to $r \equiv 1 \pmod{\ell}$,
    with $q'$ being an $\ell$th power modulo $r$,
    and $q$ not being an $\ell$th power modulo $r$.
    By the second part of \cref{lem:Zr semidirect structure}
    we find that $\Z_r \rtimes_q \hat\Z$ injects into $\G{\Xi/F}$,
    so it also injects into $\G{\Xi'/F'}$,
    and by the third part of \cref{lem:Zr semidirect structure}
    we find that $q$ lies in the procyclic subgroup of $\Z_r^\x$ generated by $q'$.
    However, this is impossible since $q'$ is an $\ell$th power modulo $r$
    and $q$ is not. The contradiction shows $p = p'$
    in the case where $\Xi$ contains $\tm{\Q_p}$,
    completing the proof.
\end{proof}

As a corollary we obtain a strengthened version of \cref{thm:neukirch}
for extensions which are $\ell$-sealed and abundant.

\begin{cor}\label{cor:char detect neukirch}
    With the notation of \cref{thm:neukirch},
    assume that $\Xi$ is abundant.
    Then $F$ and $\fk{P}$ have the same residue characteristic.
\end{cor}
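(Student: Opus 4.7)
The plan is to reduce the statement directly to \cref{prop:local characteristic detection}. By \cref{thm:neukirch}, the subgroup $H \le \G{\Om/K}$ is contained in the decomposition group $\G{\Om/K, \fk{P}}$, and this decomposition group is canonically identified with $\G{\Om^\fk{P}/K_{\fk{P}|_K}}$, which is a Galois group of the form required by \cref{prop:local characteristic detection}.

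Concretely, set $F' = K_{\fk{P}|_K}$ and $\Xi' = \Om^\fk{P}$. Then $\Xi'/F'$ is Galois (as $\Om/K$ is Galois), and the inclusion $H \le \G{\Om/K, \fk{P}} \iso \G{\Xi'/F'}$ composed with the given isomorphism $H \iso \G{\Xi/F}$ yields an embedding of profinite groups
\[
\al : \G{\Xi/F} \longrightarrow \G{\Xi'/F'}.
\]
By hypothesis $\Xi$ is abundant, so \cref{prop:local characteristic detection} applies and forces $F$ and $F'$ to have the same residue characteristic. Since the residue characteristic of $F' = K_{\fk{P}|_K}$ is by definition the residue characteristic of $\fk{P}$, this is exactly the desired conclusion.

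There is no real obstacle here; the only thing to double-check is that the hypotheses of \cref{prop:local characteristic detection} are met, namely that $F'$ is a non-Archimedean local field of characteristic zero and that $\Xi'/F'$ is Galois. The first holds because $\fk{P}$ is non-Archimedean (this was part of the conclusion of \cref{thm:neukirch}) and $K$ is a number field, so $K_{\fk{P}|_K}$ is a finite extension of some $\Q_p$. The second is immediate from Galois-ness of $\Om/K$. Thus the corollary follows in one line after unwinding the identifications.
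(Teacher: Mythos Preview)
Your argument is correct and is exactly the paper's proof: the paper writes the single line ``Since $\G{\Xi/F} \iso H \le \G{\Om/K, \fk{P}} \iso \G{\Om_\fk{P}/K_{\fk{P}|_K}}$, this follows from \cref{prop:local characteristic detection},'' and you have simply unpacked that line with the identifications $F'=K_{\fk{P}|_K}$, $\Xi'=\Om_\fk{P}$.
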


\begin{proof}
    Since $\G{\Xi/F} \iso H \le \G{\Om/K, \fk{P}} \iso \G{\Om_\fk{P}/K_{\fk{P}|_K}}$,
    this follows from \cref{prop:local characteristic detection}.
\end{proof}

\section{Prime Bijection}\label{sec:prime-bijection}

\begin{defn}
    Suppose that $\Om_1/K_1$ and $\Om_2/K_2$ are Galois extensions
    and that $\al:\G{\Om_1/K_1} \to \G{\Om_2/K_2}$ is an isomorphism of profinite groups.
    Then Galois correspondence implies that there is an inclusion-preserving bijection between the
    lattice of subextensions of $\Om_1/K_1$ and the lattice of subextensions of $\Om_2/K_2$
    which we also denote by $\al$. 
\end{defn}

Note that $\al(L \cap L') = \al(L) \cap \al(L')$, $\al(L \cdot L') = \al(L) \cdot \al(L')$, $\al(\G{\Om_1/L}) = \G{\Om_2/\al(L)}$, $\al(g(L)) = \al(g)(\al(L))$,
and $[\al(L') : \al(L)] = [L' : L]$.

\begin{prop}\label{prop:prime bijection}
    Let $K_1, K_2$ be number fields,
    and let $\Om_i/K_i$ be Galois extensions
    that are $\ell$-sealed with respect to $S$
    and abundant,
    for some finite set $S$ of rational primes.
    Suppose that $\al:\G{\Om_1/K_1} \to \G{\Om_2/K_2}$
    is an isomorphism of profinite groups.
    Then there is a set $R$ of rational primes with full density, disjoint from $S$,
    and a family of bijections $R(L) \to R(\al(L))$
    over the subextensions $\Om_1/L/K_1$,
    which we denote by $\al$, satisfying the following properties:

    \begin{enumerate}
        \item If $L/K_1$ is Galois,
        then for $\p \in R(L)$
        we have $\al(\G{L/K_1, \p}) = \G{\al(L)/K_2, \al(\p)}$.
        \item The bijections preserve residue characteristics.
        \item For $g \in \G{\Om_1/K_1}$
        and $\p \in R(L)$,
        we have $\al(g\p) = \al(g)\al(\p)$ (as primes of $\al(g(L)) = \al(g)(\al(L))$).
        \item For subextensions $\Om_1/L'/L/K_1$ and $\p \in R(L')$
        we have $\al(\p|_L) = \al(\p)|_{\al(L)}$
        and $[\al(L')_{\al(\p)} : \al(L)_{\al(\p|_L)}] = [L'_\p : L_{\p|_L}]$.
    \end{enumerate}
\end{prop}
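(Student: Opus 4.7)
The plan is to apply Neukirch's theorem with characteristic detection (\cref{cor:char detect neukirch}) to decomposition subgroups of $\Om_i/K_i$, first building the bijection on primes of $\Om_1$ and then descending to intermediate fields. I would take $R$ to be the set of rational primes $p \notin S$ such that for every $\fk{P} \in \Places_p(\Om_1)$ and every $\fk{P}' \in \Places_p(\Om_2)$ the completions $(\Om_1)_\fk{P}$ and $(\Om_2)_{\fk{P}'}$ are abundant. Abundance of $\Om_1$ and $\Om_2$ ensures this excludes only a density-zero set of rational primes, and $S$ is finite, so $R$ has full density and is disjoint from $S$.

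For $\fk{P} \in \Places(\Om_1)$ lying over $R$, the subgroup $\G{\Om_1/K_1,\fk{P}} \iso \G{(\Om_1)_\fk{P}/(K_1)_{\fk{P}|_{K_1}}}$ satisfies the hypotheses of \cref{cor:char detect neukirch}: $\mu_\ell \sub (\Om_1)_\fk{P}$ because $\mu_\ell \sub \Om_1$, we have $\perfect{(\Om_1)_\fk{P}}$ by \cref{lem:O/ell=0 outside S} since $p \notin S$, and $(\Om_1)_\fk{P}$ is abundant by the choice of $R$. Applying the corollary to $\al(\G{\Om_1/K_1,\fk{P}}) \le \G{\Om_2/K_2}$, I obtain a unique $\fk{P}' \in \Places(\Om_2)$ of the same residue characteristic as $\fk{P}$ with $\al(\G{\Om_1/K_1,\fk{P}}) \le \G{\Om_2/K_2,\fk{P}'}$. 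A symmetric application of \cref{cor:char detect neukirch} to $\al\inv(\G{\Om_2/K_2,\fk{P}'})$, using the abundance of $(\Om_2)_{\fk{P}'}$, produces $\fk{P}''$ with $\G{\Om_1/K_1,\fk{P}} \le \al\inv(\G{\Om_2/K_2,\fk{P}'}) \le \G{\Om_1/K_1,\fk{P}''}$; since $\ell^\infty \divides \card{\G{\Om_1/K_1,\fk{P}}}$ by \cref{lem:Xi/ell=0 implies ell^infty | [Xi:F]}, \cref{prop:local groups ell-disjoint} forces $\fk{P}=\fk{P}''$, upgrading the containment to an equality $\al(\G{\Om_1/K_1,\fk{P}}) = \G{\Om_2/K_2,\fk{P}'}$. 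I then set $\al(\fk{P}) := \fk{P}'$; the symmetry of the argument provides the inverse bijection.

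To descend to an intermediate $L$, for $\fk{p} \in R(L)$ I would pick any lift $\fk{P} \in \Places_\fk{p}(\Om_1)$ and declare $\al(\fk{p}) := \al(\fk{P})|_{\al(L)}$. Replacing $\fk{P}$ by $g\fk{P}$ with $g \in \G{\Om_1/L}$ changes $\al(\fk{P})$ to $\al(g)\al(\fk{P})$, and since $\al(g) \in \G{\Om_2/\al(L)}$ the restriction to $\al(L)$ is unchanged, so $\al(\fk{p})$ is well defined. Properties (1)--(3) then follow directly from the defining equality of decomposition subgroups together with the Galois correspondence and the equivariance of $\al$, and property (4) reduces to the identity $[L'_\fk{p}:L_{\fk{p}|_L}] = [\G{\Om_1/L,\fk{P}}:\G{\Om_1/L',\fk{P}}]$, whose right-hand side is preserved by $\al$.

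The main obstacle is promoting the inclusion $\al(\G{\Om_1/K_1,\fk{P}}) \le \G{\Om_2/K_2,\fk{P}'}$ coming from \cref{cor:char detect neukirch} to an equality; this is what motivates the ``both-sided'' application of Neukirch and the role of the abundance condition on $\Om_2$ in the definition of $R$. Once equality is secured, the rest amounts to formal bookkeeping via the Galois correspondence.
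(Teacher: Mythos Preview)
Your proposal is correct and matches the paper's proof essentially line for line: the same choice of $R$, the same two-sided application of Neukirch's theorem with characteristic detection to upgrade the inclusion of decomposition groups to an equality, and the same descent to intermediate fields via restriction. The only cosmetic difference is that where you invoke \cref{lem:Xi/ell=0 implies ell^infty | [Xi:F]} and \cref{prop:local groups ell-disjoint} directly to force $\fk{P}=\fk{P}''$, the paper cites the uniqueness clause of \cref{thm:neukirch}, which is itself proved by those same two results.
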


\begin{proof}
    Let $R$ be the full density set of rational primes $p$
    outside $S$ such that for every $\fk P_1 \in \Places_p(\Om_1)$
    and $\fk P_2 \in \Places_p(\Om_2)$, the fields $(\Om_1)_{\fk P_1}$
    and $(\Om_2)_{\fk P_2}$ are abundant. 
    We start by constructing a bijection for $L = \Om_1$
    satisfying the first three conditions.
    Let $\fk{P} \in R(\Om_1)$.
    We have $\mu_\ell \sub (\Om_1)_\fk{P}$
    and $\perfect{(\Om_1)_{\fk{P}}}$ by \cref{lem:O/ell=0 outside S}.
    Thus, \cref{thm:neukirch}
    applied to the 
    closed subgroup $\al(\G{\Om_1/K_1, \fk{P}}) \le \G{\Om_2/K_2}$
    implies that there is a unique non-Archimedean place $\fk{Q}$ of $\Om_2$ such that
    $\al(\G{\Om_1/K_1, \fk{P}}) \sub \G{\Om_2/K_2, \fk{Q}}$.
    Then, \cref{cor:char detect neukirch} implies $\fk{Q}$ and $\fk{P}$ have the same residue characteristic,
    which in particular means that $\fk{Q} \in R(\Om_2)$.
    This defines a map $\al:R(\Om_1) \to R(\Om_2)$
    which preserves residue characteristics
    and has the property $\al(\G{\Om_1/K_1, \fk{P}}) \sub \G{\Om_2/K_2, \al(\fk{P})}$.

    Similarly we can define a map $\al\inv:R(\Om_2) \to R(\Om_1)$
    satisfying $\al\inv(\G{\Om_2/K_2, \fk{Q}}) \sub \G{\Om_1/K_1, \al\inv(\fk{Q})}$
    for every $\fk{Q} \in R(\Om_2)$.
    Composing them gives $\G{\Om_1/K_1, \fk{P}} \sub \G{\Om_1/K_1, \al\inv(\al(\fk{P}))}$.
    By the uniqueness in \cref{thm:neukirch}, this implies $\al\inv(\al(\fk{P})) = \fk{P}$.
    Similarly we have $\al(\al\inv(\fk{Q})) = \fk{Q}$ for every $\fk{Q} \in R(\Om_2)$,
    and thus $\al$ and $\al\inv$ are inverse bijections.
    We also get $\al(\G{\Om_1/K_1, \fk{P}}) = \G{\Om_2/K_2, \al(\fk{P})}$.
    It follows that $\al : R(\Om_1) \to R(\Om_2)$
    satisfies the first and second properties.
    For the third property, note that
    \[
        \al(\G{\Om_1/K_1, g\fk{P}}) = \al\pr{\cnj{g}{\G{\Om_1/K_1, \fk{P}}}} = \cnj{\al(g)}{\al(\G{\Om_1/K_1, \fk{P}})} = \cnj{\al(g)}{\G{\Om_2/K_2, \al(\fk{P})}} = \G{\Om_2/K_2, \al(g)(\al(\fk{P}))}
    \]
    and use the uniqueness in \cref{thm:neukirch}.

    Now, let $\Om_1/L/K_1$ be a subextension and let $\p \in R(L)$.
    Let $\fk{P}_1, \fk{P}_2 \in \Places_\p(\Om_1)$.
    Then there is some $g \in \G{\Om_1/L}$
    satisfying $g\fk{P}_1 = \fk{P}_2$,
    so $\al(\fk{P}_2) = \al(g)\al(\fk{P}_1)$. Since $\al(g)|_{\al(L)} = \id_{\al(L)}$,
    this implies $\al(\fk{P}_2)|_{\al(L)} = \al(\fk{P}_1)|_{\al(L)}$.
    Thus, we get a well-defined map
    $\al : R(L) \to R(\al(L))$
    which takes $\p$ to $\al(\fk{P})|_{\al(L)}$ for any $\fk{P} \in \Places_{\p}(\Om_1)$.
    For general $L$,
    the first three properties
    follow by restricting from the $L = \Om_1$ case.
    For the fourth property, it is clear from the definition that $\al(\p|_L) = \al(\p)|_{\al(L)}$.
    Let $\fk{P} \in \Places_\p(\Om_1)$.
    Then
    \[
        [\al(L')_{\al(\p)} : \al(L)_{\al(\p|_L)}]
        = [\G{\Om_2/\al(L), \al(\fk{P})} : \G{\Om_2/\al(L'), \al(\fk{P})}]
        = [\G{\Om_1/L, \fk{P}} : \G{\Om_1/L', \fk{P}}]
        = [L'_\p : L_{\p|_L}].
    \]
\end{proof}

\begin{cor}\label{cor:arith equiv}
    Let $K_1, K_2$ be number fields
    and let $\Om_i/K_i$ be $\ell$-sealed and abundant Galois extensions. 
    Suppose $\al:\G{\Om_1/K_1} \to \G{\Om_2/K_2}$
    is an isomorphism of profinite groups.
    Then $K_1$ and $K_2$ are arithmetically equivalent.
\end{cor}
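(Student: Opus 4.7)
My plan is to combine \cref{prop:prime bijection} with Perlis's criterion for arithmetic equivalence, which says that $K_1$ and $K_2$ are arithmetically equivalent iff there is a bijection between their finite places matching residue degrees at all but a density-zero set of rational primes. Applying \cref{prop:prime bijection} with a finite set $S$ of rational primes witnessing the $\ell$-sealed property, I obtain a full-density set $R$ of rational primes and a bijection $\al : R(K_1) \to R(K_2)$ preserving residue characteristics. Discarding the finitely many primes that ramify in $K_1$ or $K_2$ leaves a full-density subset, so it suffices to establish $f(\p|p) = f(\al(\p)|p)$ for each $\p \in R(K_1)$, where $p = \p|_\Q$.

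To extract the residue degree from the group-theoretic data, I would lift $\p$ to a place $\fk{P} \in R(\Om_1)$ above it and apply property (1) of \cref{prop:prime bijection} with $L = \Om_1$ (Galois over $K_1$) to obtain an isomorphism of local Galois groups $\G{\Om_{1,\fk{P}}/K_{1,\p}} \iso \G{\Om_{2,\al(\fk{P})}/K_{2,\al(\p)}}$. Since $p \in R$, both $\Om_{i,\fk{P}}$ are abundant, and \cref{lem:Zr semidirect structure} gives a group-theoretic handle on the residue-field size $q_{K_{i,\p}} = p^{f(\p|p)}$: for each prime $r \ne p$, the integers $n > 1$ with $\Z_r \rtimes_n \hat\Z$ embedding into the local Galois group cut out the topologically procyclic subgroup $\ov{\ang{q_{K_{i,\p}}}} \sub \Z_r^\x$. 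The isomorphism transfers this invariant, so $\ov{\ang{p^{f(\p|p)}}}$ and $\ov{\ang{p^{f(\al(\p)|p)}}}$ coincide in $\Z_r^\x$ for each such $r$, and a short Dirichlet-theoretic argument—choosing $r$ for which $p$ has an order in $\Z_r^\x$ that separates the two candidate exponents—then forces $f(\p|p) = f(\al(\p)|p)$, as required.

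The main obstacle is the bifurcation in the definition of abundance: the tame-closure branch is handled directly by \cref{lem:Zr semidirect structure}, while the $\Z_p^2$-extension branch calls for a parallel group-theoretic extraction of the residue-field size from pro-$p$ invariants of the local Galois group, along the lines of the $\Z_p^2$ case in the proof of \cref{prop:local characteristic detection}, using the cyclotomic character on $\mu_{p^\infty}$ and the structure of wild inertia.
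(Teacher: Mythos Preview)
Your plan is more complicated than necessary and the gap you flag is real. The paper's proof is essentially a one-liner once the right criterion is invoked: the Stuart--Perlis theorem (the paper's reference \cite{arithmetic equivalence by number of primes}) says that two number fields are arithmetically equivalent as soon as, for a density-one set of rational primes $p$, they have the same \emph{number} of primes above $p$. Since \cref{prop:prime bijection} already furnishes a bijection $R(K_1)\to R(K_2)$ preserving residue characteristics, the cardinalities $\card{\Places_p(K_1)}$ and $\card{\Places_p(K_2)}$ agree for every $p\in R$, and you are done. No matching of residue degrees is needed at all.

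Your route instead aims for the older Perlis criterion, which does require matching residue degrees, and this forces you to extract the residue-field size $q$ group-theoretically from the local decomposition group $\G{\Om_\fk{P}/K_\p}$. In the $\tm{\Q_p}$ branch of abundance, \cref{lem:Zr semidirect structure} indeed gives a handle on $q$, but in the $\Z_p^2$ branch there is no comparable tool in the paper, and the sketch you offer (``cyclotomic character on $\mu_{p^\infty}$ and the structure of wild inertia'') is not a proof. This is not merely a technicality: if $\Om_\fk{P}$ happens to be, say, the compositum of $K_\p$ with the $\Z_p^2$-extension of $\Q_p$ and a fixed $\Z_\ell$-extension, then $\G{\Om_\fk{P}/K_\p}$ is abstractly close to $\Z_p^2\times\Z_\ell$ and carries essentially no information about $q$. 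So recovering the residue degree from the bare isomorphism type of the local group is genuinely problematic in this branch, and your proposal does not indicate how to overcome it. Switching to the Stuart--Perlis criterion sidesteps the issue entirely.
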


\begin{proof}
    By \cref{prop:prime bijection},
    there is a set $R$ of rational primes with full density
    and a bijection $\al : R(K_1) \to R(K_2)$
    that preserves residue characteristics.
    In particular, $K_1$ and $K_2$ have the same number of primes lying over $p$
    for every $p \in R$.
    By the main theorem of \cite{arithmetic equivalence by number of primes},
    this implies $K_1$ and $K_2$ are arithmetically equivalent.
\end{proof}

\section{Weak Neukirch-Uchida}\label{sec:uchida-isomorphism}

Let $G$ be a finite group.
We consider $\F_\ell[G]^n$ as a left $G$-module,
and we denote its elements by $\sum_{i=1}^n \sum_{g \in G} \lambda_{ig}ge_i$ for $\lambda_{ig} \in \F_\ell$.
For a $G$-module $M$ and a subgroup $H \le G$
we define $I_HM$ as the $H$-submodule of $M$
generated by the elements $hm-m$ for all $h \in H, m \in M$.

When $\tld{L}/L/\Q$ is a tower of number fields
with both $\tld{L}/\Q$, $L/\Q$ Galois and $\tld{L}/L$ abelian,
the abelian group $\G{\tld{L}/L}$ is naturally a $\G{L/\Q}$-module.

\begin{lem} \label{lem:embedding problem}
    Let $\ell$ be a prime number and $S$ a finite set of rational places.
    Suppose $L/\Q$ is a finite Galois extension containing $\mu_\ell$
    and let $G = \G{L/\Q}$. 
    Then for every $n \in \N$
    there exists an abelian extension $\tld{L}/L$, 
    in which every place of $S(L)$
    splits completely,
    such that $\tld{L}/\Q$ is Galois 
    and $\G{\tld{L}/L} \iso \F_\ell[G]^n$ as a $G$-module. 
    Furthermore, $\tld{L}$ can be chosen such that
    for every subfield $K \sub L$ containing $\mu_\ell$
    there exists an abelian subextension $\tld{K}/K$ of $\tld{L}/K$,
    of exponent $\ell$,
    in which every place of $S(K)$ splits completely,
    and which satisfies
    $\G{\tld{L}/L\tld{K}} = I_H\G{\tld{L}/L}$
    for $H = \G{L/K}$.
    \[
        \xymatrix{
            \tld{L}\ar@{-}[dr]^{I_H\F_\ell[G]}\ar@{-}[dd]_{\F_\ell[G]^n}\\
            & L\tld{K}\ar@{-}[dl]\ar@{-}[dr] \\
            L \ar@{-}[dr]^H\ar@{-}[dd]_G && \tld{K}\ar@{-}[dl] \\
            & K \ar@{-}[dl] \\
            \Q
        }
    \]
\end{lem}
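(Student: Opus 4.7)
Since $\mu_\ell \sub L$, Kummer theory identifies abelian exponent-$\ell$ extensions of $L$ with subgroups $V \sub \qell L$, via $V \mapsto L(V^{1/\ell})$, and the resulting Galois group is naturally isomorphic to $\mathrm{Hom}(V, \mu_\ell)$ as a $G$-module through the Kummer pairing. When $V$ is $G$-stable the extension is Galois over $\Q$, and the condition that every place of $S(L)$ split completely translates to $V \sub \ker(\Phi_{L, \ell, S})$. By the projection formula $\F_\ell[G] \otimes_{\F_\ell} \mu_\ell \iso \F_\ell[G]$ as $G$-modules (since $\F_\ell[G]$ is induced from the trivial subgroup), we have $\mathrm{Hom}(\F_\ell[G]^n, \mu_\ell) \iso \F_\ell[G]^n$. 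Hence the first assertion reduces to finding a $G$-submodule $V \sub \ker(\Phi_{L, \ell, S})$ with $V \iso \F_\ell[G]^n$.

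For this, the plan is to invoke Chebotarev's density theorem to produce $n$ rational primes $p_1, \dots, p_n$ outside $S$ and distinct from $\ell$, each splitting completely in $L$. For each $p_i$ fix a prime $\fk q_i$ of $L$ above it; its $G$-orbit consists of $[L:\Q]$ distinct primes. Using weak approximation together with \cref{prop:K^x/K^ell local surjection}, and enlarging the collection of primes if necessary to handle obstructions from the class group of $L$ (extracting a free $\F_\ell[G]$-submodule by linear algebra over $\F_\ell$), I would produce elements $\al_1, \dots, \al_n \in L^\x$ that are $\ell$th powers at every place of $S(L)$ and whose valuations at the primes above $p_1, \dots, p_n$ isolate the generator of the $i$th copy of $\F_\ell[G]$. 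Independence of these valuations then forces the $G$-orbits $\{g\al_i : g \in G, 1 \le i \le n\}$ to be $\F_\ell$-linearly independent in $\qell L$, so they span the desired $V$; setting $\tld L = L(V^{1/\ell})$ completes the first part.

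For the descent to $K$, the hypothesis $\mu_\ell \sub K$ ensures that the cyclotomic character restricts trivially to $H = \G{L/K}$. A short computation with the $G$-equivariance of the Kummer pairing then shows that the annihilator of $I_H \G{\tld L/L}$ in $V$ is precisely $V^H$, so under Kummer theory the intermediate field $L\tld K$ with $\G{\tld L/L\tld K} = I_H \G{\tld L/L}$ corresponds to the submodule $V^H \sub V$. Because $V \iso \F_\ell[G]^n$, this $V^H$ is explicitly spanned by the norm classes $\be_{i, \ov g} := \Nm_{L/K}(g\al_i) = \prod_{h \in H} h(g\al_i)$, indexed by $\ov g \in G/H$ and $1 \le i \le n$, all of which lie in $K^\x$. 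Letting $V_K \sub \qell K$ be their $\F_\ell$-span and setting $\tld K = K(V_K^{1/\ell})$, the field $\tld K/K$ is abelian of exponent $\ell$; every place of $S(K)$ splits completely in $\tld K$ because the $\al_i$ are locally $\ell$th powers at $S(L)$, so their norms are locally $\ell$th powers at $S(K)$; and $V_K \to V^H$ is an isomorphism (injectivity follows from the independence of the $\be_{i, \ov g}$ as basis elements of $V^H$ inside $\F_\ell[G]^n$). Thus $L\tld K = L((V^H)^{1/\ell})$, giving $\G{\tld L/L\tld K} = I_H \G{\tld L/L}$, as required.

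The main obstacle is the middle step: producing the $\al_i$ with prescribed valuations and prescribed local classes at $S(L)$, while preserving the free $\F_\ell[G]^n$ structure modulo $\ell$th powers. The naive prescription typically fails to define a principal divisor, and one must either sample auxiliary primes generating the $\ell$-part of the class group of $L$ and then extract a free $G$-submodule by linear algebra over $\F_\ell$, or apply a Grunwald--Wang type adjustment — preserving both the $\ker(\Phi_{L, \ell, S})$ constraint and the freeness is the technically delicate point.
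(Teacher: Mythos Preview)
Your strategy aligns with the paper's, but you have manufactured an obstacle by choosing to prescribe \emph{valuations} of the $\al_i$ at the primes above $p_1,\dots,p_n$; this is what forces you to worry about whether a given divisor is principal, and hence about the class group. The paper sidesteps this entirely: writing $T = \{p_1,\dots,p_n\}$, one simply invokes \cref{prop:K^x/K^ell local surjection} for the enlarged set $S \cup T$ to produce, for each chosen representative $\fk P_k \in T(L)$, an $\al_{\fk P_k} \in L^\times$ that is a local $\ell$th power at every place of $S(L) \cup T(L) \setminus \{\fk P_k\}$ and a local non-$\ell$th-power at $\fk P_k$; one then sets $\al_{g\fk P_k} = g\al_{\fk P_k}$. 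No divisor prescription enters, so no class group or Grunwald--Wang obstruction arises. Linear independence of these $n|G|$ classes in $\qell L$ is witnessed by their local behaviour at $T(L)$: the place $\fk P$ is indecomposable in $L(\al_{\fk P}^{1/\ell})$ but splits completely in each $L(\al_{\fk P'}^{1/\ell})$ with $\fk P' \ne \fk P$, so the compositum $\tld L$ has degree $\ell^{n|G|}$ over $L$. Your ``main obstacle'' dissolves once you control local $\ell$th-power classes at $T(L)$ rather than valuations.

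For the descent, your annihilator argument correctly identifies $L\tld K$ with the subfield corresponding to $V^H$, but the injectivity of $V_K \to V^H$ --- i.e., independence of the norm classes $\be_{i,\ov g}$ already in $\qell K$, not merely in $\qell L$ --- is not automatic from the abstract module structure. The paper secures it by the same local trick: since each $p_i$ splits completely in $L$, the local norm $L_{\fk Q} \to K_{\q}$ is the identity for $\q \in T(K)$ and $\fk Q$ above $\q$, so $\be_\p = \Nm_{L/K}\al_{\fk P}$ is a local $\ell$th power at $\q$ if and only if $\p \ne \q$. This simultaneously yields the independence in $\qell K$ (hence $[\tld K : K] = \ell^{|T(K)|}$) and the splitting at $S(K)$; the equality $\G{\tld L/L\tld K} = I_H\G{\tld L/L}$ then follows by comparing orders, since one inclusion is immediate from $\tld K/K$ being abelian.
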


\begin{proof}
    By Chebotarev's Density Theorem,
    there are infinitely many prime numbers that split completely in $L$.
    Let $T$ be a set consisting of $n$ such prime numbers
    that do not belong to $S$.
    Then $T(L)$ is a free $G$-set of cardinality $n \cdot \card{G}$.

    Let $\fk{P}_1, \dots, \fk{P}_n$
    be representatives for the $G$-orbits in $T(L)$.
    For each $1 \le k \le n$,
    \cref{prop:K^x/K^ell local surjection}
    for the set $S \cup T$
    implies there exists $\al_{\fk{P}_k} \in L^\x$
    whose images in $L_\fk{P}, \fk{P} \in S(L) \cup T(L) \setminus \{\fk{P}_k\}$
    are $\ell$th powers, while its image in $L_{\fk{P}_k}$ is not.
    Extend the function $\fk{P}_k \mapsto \al_{\fk{P}_k}$
    to a $G$-set morphism $\fk{P} \mapsto \al_\fk{P}$
    from $T(L)$ to $L^\x$
    and denote $L(\al_\fk{P}^\frac1\ell)$ by $L_{(\fk{P})}$.
    Then every place in $S(L)\cup T(L) \setminus \{\fk{P}\}$ splits completely in $L_{(\fk{P})}$
    while $\fk{P}$ is indecomposable.
    Also, note that $G$ acts on the set of abelian extensions of $L$,
    and under this action $g(L_{(\fk{P})}) = L_{(g\fk{P})}$. 
    This action also defines isomorphisms $\G{L_{(\fk{P})}/L} \arw{\phi_g} \G{L_{(g\fk{P})}/L}$
    satisfying $\phi_e=\id, \phi_{gg'} = \phi_g \circ \phi_{g'}$ for all $g, g' \in G$.

    Consider the compositum $\tld{L} = \prod_{\fk{P} \in T(L)}L_{(\fk{P})}$, which is Galois over $\Q$.
    Since $L_{(\fk{P})}$ is the only extension among the $L_{(\fk{P}')}$ where $\fk{P}$ is indecomposable,
    it follows that $L_{(\fk{P})} \nsubseteq \prod_{\fk{P}' \ne \fk{P}} L_{(\fk{P}')}$,
    so $[\tld{L}:L] = \ell^{\card{T(L)}}$.
    Therefore, the injection $\G{\tld{L}/L} \to \prod_{\fk{P} \in T(L)} \G{L_{(\fk{P})}/L}$ is an isomorphism.
    However, this is a $G$-equivariant map,
    where the action of $G$ on the right-hand side is given by
    $g \cdot (\lambda_\fk{P})_{\fk{P} \in T(L)} = (\phi_g(\lambda_{g\inv\fk{P}}))_{\fk{P} \in T(L)}$.
    This action makes the right hand side isomorphic to $\F_\ell[G]^n$,
    proving the first part of the lemma.
    
    Let $K \sub L$ be a subfield containing $\mu_\ell$
    and let $H = \G{L/K}$.
    For $\p \in T(K)$,
    define $\be_{\p} = \Nm_K^L\al_\fk{P}$ for any $\fk{P} \in \Places_\p(L)$,
    and define $K_{(\p)} = K(\be_{\p}^\frac1\ell)$.
    
    For every $\fk{P} \in T(L), \fk{Q} \in S(L) \cup T(L)$ denote by $\al_{\fk{P};\fk{Q}}$
    the image of $\al_\fk{P}$ in the completion $L_{\fk{Q}}$
    and for every $\p \in T(K), \q \in S(K) \cup T(K)$
    denote by $\be_{\p; \q}$ the image of $\be_\p$ in the completion $K_\q$.
    Then $\be_{\p;\q} = \prod_{\fk{Q} \in \Places_\q(L)}\Nm_{K_\q}^{L_{\fk{Q}}}\al_{\fk{P};\fk{Q}}$
    for any $\fk{P} \in \Places_\p(L)$.
    Since $\Nm_{K_\q}^{L_{\fk{Q}}} : L_{\fk{Q}} \to K_\q$ is an isomorphism,
    and since $\al_{\fk{P};\fk{Q}}$
    is an $\ell$th power if and only if $\fk{P} \ne \fk{Q}$,
    we find that $\be_{\p;\q}$ is an $\ell$th power 
    if and only if $\p \ne \q$.
    It follows that all places of $S(K) \cup T(K) \setminus \{\p\}$
    split completely in $K_{(\p)}/K$,
    while $\p$ is indecomposable.
    Since all of them split completely in $L$,
    it follows that
    all the places in $S(L)\cup T(L) \setminus \Places_\p(L)$
    split completely in $K_{(\p)}L$
    while the places in $\Places_\p(L)$
    are indecomposable in $K_{(\p)}L$. 
    Consider the compositum $\tld{K} = \prod_{\p \in T(K)}K_{(\p)}$.
    This is an abelian extension of $K$ of exponent $\ell$. 
    From the above, we find that $[\tld{K}:K] = [\tld{K}L:L] = \ell^{\card{T(K)}}$,
    and in particular $\tld{K} \cap L = K$.

    Consider the $H$-submodule $I_H\G{\tld{L}/L} \le \G{\tld{L}/L}$.
    It is equal to the commutator group $[\G{\tld{L}/K}, \G{\tld{L}/L}]$.
    The elements of this group act trivially on $L$,
    and they also act trivially on $\tld{K}$ since $\tld{K}/K$ is abelian.
    Thus we have $I_H\G{\tld{L}/L} \sub \G{\tld{L}/L\tld{K}}$.
    We also have $[\tld{L}:L\tld{K}] = \frac{[\tld{L}:L]}{[L\tld{K}:L]} = \frac{[\tld{L}:L]}{[\tld{K}:K]} = \card{I_H\G{\tld{L}/L}}$,
    showing that $I_H\G{\tld{L}/L} = \G{\tld{L}/L\tld{K}}$.
\end{proof}

\begin{thm} \label{thm:uchida isomorphism}
    Let $K_1, K_2$ be number fields
    containing $\mu_\ell$,
    let $\Om_i/K_i$ be $\ell$-sealed and abundant Galois extensions,
    and suppose there exists an isomorphism $\al:\G{\Om_1/K_1} \to \G{\Om_2/K_2}$.
    Then $K_1$ and $K_2$ are isomorphic.
\end{thm}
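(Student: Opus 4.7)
The plan is to adapt Uchida's method for the solvable Neukirch-Uchida variant, exploiting $\mu_\ell \sub K_i$ through Kummer theory and the embedding problem of \cref{lem:embedding problem}. From the previous sections we already have a residue-characteristic-preserving prime bijection (\cref{prop:prime bijection}) and the arithmetic equivalence of $K_1$ and $K_2$ (\cref{cor:arith equiv}); what remains is to promote this data to an actual field isomorphism.

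Concretely, after a preliminary reduction to arrange a Galois situation over $\Q$ (noting that $\Om_i$ itself need not be Galois over $\Q$), choose a finite Galois extension $L_1/\Q$ with $K_1 \sub L_1 \sub \Om_1$. Applying \cref{lem:embedding problem} to $L_1$ for $n$ large produces an abelian Kummer extension $\tld{L_1}/L_1$ with $\G{\tld{L_1}/L_1} \iso \F_\ell[\G{L_1/\Q}]^n$ as a Galois module, along with exponent-$\ell$ subfields $\tld{K}/K$ for each $K \sub L_1$ containing $\mu_\ell$ that are singled out by the group-theoretic identity $\G{\tld{L_1}/L_1\tld{K}} = I_H\G{\tld{L_1}/L_1}$ with $H = \G{L_1/K}$. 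Transferring via $\al$, we obtain $L_2 := \al(L_1)$, $\tld{L_2} := \al(\tld{L_1})$, and an analogous companion subfield $\tld{K_2}$ of $K_2$, determined by the same group-theoretic identity in $\G{\tld{L_2}/L_2}$. After fixing a compatible identification of $\mu_\ell \sub K_1$ with $\mu_\ell \sub K_2$, Kummer duality dualizes this into an $\F_\ell$-linear bijection between appropriate subspaces of $L_i^\x/L_i^{\x\ell}$, matching the Kummer subspace corresponding to $L_1\tld{K_1}/L_1$ with that corresponding to $L_2\tld{K_2}/L_2$.

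Letting $L_1$ and $n$ vary along a cofinal system and combining the Kummer-class matching with the valuation and local-degree data of \cref{prop:prime bijection}, one assembles a compatible family of multiplicative bijections modulo $\ell$-th powers between enough of $K_1^\x$ and $K_2^\x$ to reconstruct both fields. The main obstacle is the final lifting step: promoting this mod-$\ell$ multiplicative matching to a genuine additive-and-multiplicative field isomorphism. We expect this to follow by iterating the embedding problem at varying primes to produce enough additive constraints via Chebotarev, together with the compatibility with completions afforded by arithmetic equivalence, to pin down the isomorphism. This lifting is the technical core of the argument.
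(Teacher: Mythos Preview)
Your plan diverges from the paper's in a way that introduces two real gaps.

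\textbf{The ambient field is external, not inside $\Om_i$.} You propose to choose $L_1/\Q$ Galois with $K_1\sub L_1\sub\Om_1$, then set $L_2:=\al(L_1)$ and transport the $\G{L_1/\Q}$-module structure of $\G{\tld{L_1}/L_1}$ through $\al$. But $\al$ is only an isomorphism of groups over $K_i$; it sees nothing of the $\G{\cdot/\Q}$-structure. There is no reason $\al(L_1)$ should be Galois over $\Q$, and the $\F_\ell[G]$-module structure simply does not transfer. The paper sidesteps this entirely: it takes $L$ to be the Galois closure of $K_1K_2$ over $\Q$, which is \emph{not} assumed to lie in either $\Om_i$. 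The embedding problem (\cref{lem:embedding problem}) is applied once to this common $L$, producing a single $\tld L$ and, for each $i$, a small abelian exponent-$\ell$ extension $\tld K_i/K_i$ that \emph{does} lie in $\Om_i$ (this is exactly what $\ell$-sealedness buys). The crucial step you are missing is then: by \cref{cor:arith equiv}, $\tld K_1$ and $\al(\tld K_1)$ are arithmetically equivalent, hence share a Galois closure, hence $\al(\tld K_1)\sub\tld L$. This is what lets both sides be compared inside the same ambient $\tld L$ with its honest $G=\G{L/\Q}$-action.

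\textbf{The endgame is Gassmann, not Kummer reconstruction.} Your proposed finish---match Kummer classes in $L_i^\times/L_i^{\times\ell}$, vary $L_1$ and $n$, and ``lift'' a mod-$\ell$ multiplicative matching to an additive field isomorphism---is not Uchida's argument, and the lifting step you flag as ``the technical core'' is left entirely unargued. The paper's route is far shorter and purely group-theoretic: from $\al(\tld K_1)\sub\tld L$ one forms $\tld K_2':=\tld K_2\cdot\al(\tld K_1)$ and its counterpart $\tld K_1'$, shows $L\tld K_i'=L\tld K_i$, and applies \cref{cor:arith equiv} again to conclude that $L\tld K_1$ and $L\tld K_2$ are arithmetically equivalent. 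Hence the subgroups $I_{H_1}\F_\ell[G]^n$ and $I_{H_2}\F_\ell[G]^n$ of $\G{\tld L/\Q}$ are Gassmann equivalent. Taking $n=\card{H_1}$ and the specific element $\lambda=\sum_k (h_k-e)e_k\in I_{H_1}\F_\ell[G]^n$, Gassmann equivalence yields $g\in G$ with $g\lambda\in I_{H_2}\F_\ell[G]^n$; reading this coordinatewise in $\F_\ell[H_2\backslash G]$ forces $gH_1g^{-1}\sub H_2$, and by symmetry $H_1,H_2$ are conjugate in $G$, giving $K_1\iso K_2$ directly. No multiplicative reconstruction or limiting process is needed.
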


\begin{proof}
    Let $L$ be the Galois closure of $K_1K_2$ over $\Q$.
    Denote $G = \G{L/\Q}$ and $H_i = \G{L/K_i} \le G$.
    Let $n = \card{H_1}$.
    Applying \cref{lem:embedding problem}
    with $S$ such that both $\Om_i$ are $\ell$-sealed with respect to it,
    we obtain an extension $\tld{L}/L$
    with Galois group isomorphic to $\F_\ell[G]^n$ as a $G$-module,
    in which every place of $S(L)$ splits completely. 
    We also obtain abelian subextensions $\tld{K}_i/K_i$ of $\tld{L}/K_i$,
    of exponent $\ell$,
    in which every place of $S(K_i)$ splits completely,
    and which satisfy $\G{\tld{L}/L\tld{K}_i} = I_{H_i}\F_\ell[G]^n$.
    It follows that $\tld{K}_i \sub \Om_i$. 

    \[
        \xymatrix@C+1.3pc@R+1.3pc{
            \Om_1\ar@{-}[dd]&&\tld{L}\ar@{-}[dd]^{\F_\ell[G]^n} \ar@{-}[ld]_{I_{H_1}\F_\ell[G]^n}\ar@{-}[rd]^{I_{H_2}\F_\ell[G]^n} && \Om_2\ar@{-}[dd]\\
            &L\tld{K}_1\ar@{-}[rd]\ar@{-}[ld]&&L\tld{K}_2\ar@{-}[ld]\ar@{-}[rd] && \\
            \tld{K}_1\ar@{-}[rd]&&L\ar@{-}[dd]^G\ar@{-}[ld]_{H_1}\ar@{-}[rd]^{H_2}&&\tld{K}_2\ar@{-}[ld]\\
            &K_1\ar@{-}[rd]&&K_2\ar@{-}[ld]\\
            &&\Q&&
        }
    \]

    From \cref{cor:arith equiv} we deduce that $\tld{K}_1$ and $\al(\tld{K}_1)$ are arithmetically equivalent.
    By \cite[Theorem 1]{arithmetic equivalence Perlis} they have the same Galois closure over $\Q$,
    so $\al(\tld{K}_1) \sub \tld{L}$.
    We also have that $\al(\tld{K}_1)$ is an abelian extension of $K_2$ of exponent $\ell$.
    Consider the number field
    $\tld{K}_2' = \tld{K}_2 \cdot \al(\tld{K}_1)$,
    which is an abelian extension of $K_2$ of exponent $\ell$.
    Since $I_{H_2}\G{\tld{L}/L} = [\G{\tld{L}/K_2}, \G{\tld{L}/L}]$
    acts trivially on both $L$ and $\tld{K}_2'$, we find that
    \[
        I_{H_2}\G{\tld{L}/L} \sub \G{\tld{L}/L\tld{K}_2'} \sub \G{\tld{L}/L\tld{K}_2} = I_{H_2}\G{\tld{L}/L}
    \]
    and thus $L\tld{K}_2' = L\tld{K}_2$.
    Similarly, for $\tld{K}_1' = \tld{K}_1 \cdot \al\inv(\tld{K}_2)$,
    we find that $L\tld{K}_1' = L\tld{K}_1$.
    Note that $\al(\tld{K}_1') = \tld{K}_2'$ by definition,
    and thus applying \cref{cor:arith equiv} to the extensions $\Om_i/\tld{K}_i'$, we find that $\tld{K}_1'$ and $\tld{K}_2'$ are arithmetically equivalent.
    By \cite[Lemma 1]{solvable Neukirch-Uchida},
    arithmetic equivalence is preserved under compositum with a Galois extension of $\Q$,
    so we find that $L\tld{K}_1' = L\tld{K}_1$ and $L\tld{K}_2' = L\tld{K}_2$ are also arithmetically equivalent.
    Therefore, their corresponding subgroups $\G{\tld{L}/L\tld{K}_i} = I_{H_i}\F_\ell[G]^n$
    in $\G{\tld{L}/\Q}$ are Gassmann equivalent \cite{arithmetic equivalence Perlis}.
    In particular, for every element $\lambda \in I_{H_1}\F_\ell[G]^n$
    there must be $g \in G$ such that $g\lambda \in I_{H_2}\F_\ell[G]^n$.

    Enumerate $H_1$ as $h_1, \dots, h_n$
    and consider the element $\lambda = \sum_{1 \le k \le n} (h_k - e)e_k \in I_{H_1}\F_\ell[G]^n$.
    Let $g \in G$ be such that $g\lambda \in I_{H_2}\F_\ell[G]^n$.
    It follows that $\sum_{1 \le k \le n}(gh_k - g)e_k \in I_{H_2}\F_\ell[G]^n$,
    so $gh - g \in I_{H_2}\F_\ell[G]$
    for every $h \in H_1$.
    Thus the elements $gh$ and $g$ identify in the quotient
    $\F_\ell[G]/I_{H_2}\F_\ell[G] \iso \F_\ell[H_2 \backslash G]$,
    so $H_2gh = H_2g$,
    implying that $\cnj{g}{h} \in H_2$. Therefore $\cnj{g}{H_1} \sub H_2$.
    Similarly, there is $k \in G$ such that $\cnj{k}{H_2} \sub H_1$.
    Since $G$ is finite, this implies $H_1$ and $H_2$ are conjugate in $G$.
    This conjugation induces an isomorphism $K_1 \iso K_2$.
\end{proof}

\section{Full Neukirch-Uchida}\label{sec:full-neukirch-uchida}

In this section we present a method to prove the full version of Neukirch-Uchida
using the weak version from \cref{thm:uchida isomorphism}.

Let us outline the proof; we begin by proving a lemma
stating that in the context of our Neukirch-Uchida variant,
with some additional assumption,
there exists an isomorphism $\s:K_1 \to K_2$
whose induced bijection on primes
coincides with the induced bijection of $\al$ 
on primes.
Then, in the general case, we construct arbitrarily large extensions $L_i'/K_i$
for which the condition of the lemma is satisfied.
By applying the lemma to $L_i'$
and considering the group actions on the primes of $L_i'$,
we deduce that $\al$
coincides with a conjugation $g \mapsto \cnj\s{g}$ on arbitrarily large quotients of $\G{\Om_i/K_i}$,
implying the desired result via a compactness argument.

\begin{lem}\label{lem:ell prime differentiation}
    Let $K$ be a number field containing $\mu_\ell$
    and let $\Om/K$ be a Galois extension
    which is $\ell$-sealed with respect to a finite set $S$ of rational places.
    Let $\p_1, \dots, \p_n$ be distinct places of $\Places_\fin(K) \setminus S(K)$
    and let $a_1, \dots, a_n$ be non-negative integers.
    Then there exists a finite subextension $L/K$ of $\Om/K$
    such that $[L_{\fk{P}}:K_{\p_i}] = \ell^{a_i}$ 
    for all $1 \le i \le n$
    and for all $\fk{P} \in \Places_{\p_i}(L)$.
\end{lem}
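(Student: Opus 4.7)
The plan is an iterative Kummer construction. I would induct on $m := \max_i a_i$ to build a tower $K = F_0 \sub F_1 \sub \cdots \sub F_m = L$ inside $\Om$ such that at stage $j$ the uniform condition $[(F_j)_\fk{P} : K_{\p_i}] = \ell^{\min(a_i, j)}$ holds for every $\fk{P} \in \Places_{\p_i}(F_j)$ and every $i$. The case $j = 0$ is immediate with $F_0 = K$.

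For the inductive step from $F_j$ to $F_{j+1}$, I would mark those primes $\q$ of $F_j$ lying above some $\p_i$ with $a_i \ge j + 1$; the remaining primes of $F_j$ above $\p_1, \dots, \p_n$ are unmarked. Applying \cref{prop:K^x/K^ell local surjection} to the number field $F_j$ with the rational set $S \cup \{p_1, \dots, p_n\}$ (where $p_i$ is the rational prime below $\p_i$), I would produce $\al \in F_j^\x$ whose image in $\qell{(F_j)_\p}$ is trivial at every $\p \in S(F_j)$ and at every unmarked prime, but whose class in $\qell{(F_j)_\q}$ is non-trivial at every marked $\q$. The non-triviality is available because $\qell{(F_j)_\q}$ is a non-zero $\F_\ell$-vector space, a standard consequence of local Kummer theory for a non-Archimedean local field of characteristic zero containing $\mu_\ell$.

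The decisive hypothesis-driven input is that $\al^{1/\ell}$ lies in $\Om$: since $\al$ is a local $\ell$-th power at every $\p \in S(F_j)$, it is a local $\ell$-th power at every $\fk{P} \in S(\Om)$, and \cref{prop:ell-sealed alt def} then forces $\al \in \Om^{\x\ell}$. Setting $F_{j+1} := F_j(\al^{1/\ell}) \sub \Om$, the extension $F_{j+1}/F_j$ has degree $1$ or $\ell$. At every marked $\q$, the non-trivial local Kummer class forces $[(F_{j+1})_\fk{P} : (F_j)_\q] = \ell$ for each $\fk{P}|\q$ (since $\mu_\ell \sub (F_j)_\q$, every $\ell$-th root generates the same local extension), while at every unmarked $\q$ the prime splits completely. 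Multiplying by the inductive local degree then yields exactly $\ell^{\min(a_i, j+1)}$ at every relevant completion of $F_{j+1}$.

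I do not foresee a substantial obstacle: the hypotheses ($\mu_\ell \sub F_{j+1}$, $\Om/F_{j+1}$ Galois, $\Om$ still $\ell$-sealed with respect to $S$) pass through the induction trivially, and the only bookkeeping point—that all primes of $F_j$ above a given $\p_i$ receive the same marking—is automatic because the marking depends only on $i$.
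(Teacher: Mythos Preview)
Your proposal is correct and follows essentially the same approach as the paper: an iterative Kummer construction using \cref{prop:K^x/K^ell local surjection} to control local $\ell$-th power classes and the $\ell$-sealedness of $\Om$ (via \cref{prop:ell-sealed alt def}) to ensure the adjoined $\ell$-th root lands in $\Om$. The only cosmetic difference is that the paper first reduces to the case of a single nonzero $a_i$ (via a compositum) and then inducts on that value, whereas you grow all primes simultaneously by inducting on $\max_i a_i$; the underlying mechanism is identical.
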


\begin{proof}
    It suffices to show that for every $1 \le i \le n$ there is
    an extension $L_i/K$ corresponding to the sequence 
    \[0, \dots, 0, a_i, 0, \dots, 0\]
    with $a_i$ at the $i$th place.
    Indeed, if such $L_i$ exist then $L = L_1 \cdot \dots \cdot L_n$ is an extension corresponding to the sequence $a_1, \dots, a_n$.
    Thus we may assume $a_i$ is the only non-zero value in the sequence.

    If $a_i=0$, we can take $L_i = K$.
    Otherwise, by \cref{prop:K^x/K^ell local surjection}
    for the set $S \cup \{\p_1|_\Q, \dots, \p_n|_\Q\}$
    there is $\al \in K$
    whose images in $K_\p, \p \in S(K) \cup \{\p_1, \dots, \p_{i - 1}, \p_{i + 1}, \dots, \p_n\}$
    are $\ell$th powers,
    while its image in $K_{\p_i}$ is not.
    Thus $K' = K(\al^\frac1\ell)$ is a subextension of $\Om$
    in which $\p_1, \dots, \p_{i - 1}, \p_{i + 1}, \dots, \p_n$
    split completely, while $\p_i$ is indecomposable, and $[K'_{\p_i'} : K_{\p_i}] = \ell$
    for $\p_i'$ the unique lift of $\p_i$ to $K'$.
    This reduces the original problem to $K'$
    with the integer sequence
    \[
        0, \dots, 0, a_i - 1, 0, \dots, 0
    \]
    which has length $\ell(n - 1) + 1$
    and in which $a_i - 1$ is in the $\ell(i - 1)+1$ place.
    Applying this argument inductively proves the lemma.
\end{proof}

\begin{rmk}
    Note that the extension $L/K$ constructed above may not be Galois.
\end{rmk}

In the following, we use the notation $\tld{K}$
for the Galois closure of an algebraic extension $K/\Q$.

\begin{lem} \label{lem:alpha identifies with sigma on primes}
    Let $K_1, K_2$ be number fields containing $\mu_\ell$,
    and let $\Om_i/K_i$ be Galois extensions
    that are $\ell$-sealed
    and abundant.
    Suppose $\al: \G{\Om_1/K_1} \to \G{\Om_2/K_2}$ is an isomorphism of profinite groups.
    Assume further that $\tld{K}_2 \cap \Om_2 = K_2$.
    Let $R$ be the set of rational primes with full density
    from \cref{prop:prime bijection}.
    Then there exists an isomorphism $\s:K_1 \to K_2$
    such that the bijection $\s:R(K_1) \to R(K_2)$
    coincides with the bijection $\al:R(K_1) \to R(K_2)$
    from \cref{prop:prime bijection}.
\end{lem}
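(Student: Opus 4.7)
The plan is to apply \cref{thm:uchida isomorphism} not only to $\Om_i/K_i$ but also to every Galois subextension $\Om_i/L_i$ with $K_i \sub L_i$, and then to combine the resulting isomorphisms via a compactness argument. The hypothesis $\tld{K}_2 \cap \Om_2 = K_2$ will be used to force the isomorphisms obtained at higher levels to restrict to isomorphisms $K_1 \to K_2$, providing enough coherence to single out one matching $\al$ on primes.

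Concretely, for each finite Galois subextension $\Om_1/L_1/K_1$, set $L_2 = \al(L_1) \sub \Om_2$. The restriction of $\al$ gives an isomorphism $\G{\Om_1/L_1} \to \G{\Om_2/L_2}$, and since $\mu_\ell \sub K_i \sub L_i$ and $\Om_i/L_i$ remains $\ell$-sealed and abundant, \cref{thm:uchida isomorphism} produces an isomorphism $\s_L: L_1 \to L_2$. The key point is that the image $\s_L(K_1)$ is a subfield of $\Om_2$ abstractly isomorphic to $K_1 \iso K_2$, hence a $\G{\Q}$-conjugate of $K_2$ inside $\ov{\Q}$; every such conjugate lies in $\tld{K}_2$, so $\s_L(K_1) \sub \tld{K}_2 \cap \Om_2 = K_2$, and by comparing degrees $\s_L(K_1) = K_2$. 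Thus $\s_L$ restricts to an isomorphism $\s_L|_{K_1}: K_1 \to K_2$.

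Because the set of isomorphisms $K_1 \to K_2$ is finite (a torsor over $\Aut(K_2)$), a pigeonhole argument applied to a cofinal tower of such $L_1$'s yields a single isomorphism $\s:K_1 \to K_2$ that appears as $\s_L|_{K_1}$ for cofinally many $L_1$. To verify $\s = \al$ on $R(K_1)$, I would argue prime by prime: given $\p \in R(K_1)$, pick a large $L_1$ with $\s_L|_{K_1} = \s$, and use \cref{lem:ell prime differentiation} to construct further refinements in which the local degree at $\p$ distinguishes it from every other prime of $K_1$ of the same residue characteristic. The freedom in modifying $\s_L$ among extensions of $\s$ is parameterized by $\G{L_2/K_2}$, and this freedom should suffice to align $\s_L$ with $\al$ on the primes of $R(L_1)$ above $\p$, which together with property (4) of \cref{prop:prime bijection} forces $\s(\p) = \al(\p)$.

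The main obstacle is the final matching step: showing that among the finitely many isomorphisms $K_1 \to K_2$ one can always land on the one compatible with the prime bijection $\al$. This requires leveraging \cref{lem:ell prime differentiation} to separate primes of $K_2$ sharing a residue characteristic via local-degree data transported across $\al$, so as to prevent the pigeonhole step from collapsing onto an \emph{incorrect} isomorphism; the rigidity imposed by $\s_L(K_1) = K_2$ across cofinally many $L_1$ is what ultimately rules out any such mismatch.
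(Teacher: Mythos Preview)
Your ingredients are correct --- \cref{thm:uchida isomorphism}, the hypothesis $\tld{K}_2 \cap \Om_2 = K_2$ to force $\s_L(K_1) = K_2$, and \cref{lem:ell prime differentiation} --- but the order in which you assemble them creates a real gap. After pigeonholing you have a fixed $\s: K_1 \to K_2$ occurring as $\s_L|_{K_1}$ for cofinally many $L_1$. To check $\s(\p) = \al(\p)$ you then want to invoke \cref{lem:ell prime differentiation}, but that lemma hands you a \emph{specific} extension $L_1$ tailored to $\p$ (or to a finite set of primes), and you have no control over whether the isomorphism $\s_{L_1}$ produced by \cref{thm:uchida isomorphism} on \emph{that} $L_1$ restricts to your pigeonholed $\s$. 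The phrase ``the freedom in modifying $\s_L$ \dots is parameterized by $\G{L_2/K_2}$'' does not help: composing $\s_L$ with an element of $\G{L_2/K_2}$ does preserve $\s_L|_{K_1}$, but it cannot change $\s_L|_{K_1}$ into the pigeonholed $\s$ if it wasn't already equal to it, and conversely it gives no mechanism for matching $\s_L$ with $\al$ on primes.

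The paper reverses the order and thereby avoids the gap entirely. First, since there are only finitely many isomorphisms $K_1 \to K_2$, it suffices to produce, for each \emph{finite} $R' \sub R$, some $\s$ with $\s(\p) = \al(\p)$ for all $\p \in R'(K_1)$; a finite-intersection argument then gives one $\s$ working for all of $R$. Next, enumerate $R'(K_1) = \{\p_1,\dots,\p_n\}$ and use \cref{lem:ell prime differentiation} to build a single $L_1 \sub \Om_1$ with $[(L_1)_{\fk{P}}:(K_1)_{\p_i}] = \ell^i$ for every $\fk{P}$ over $\p_i$. Apply \cref{thm:uchida isomorphism} once, to this $L_1$, obtaining $\s: L_1 \to L_2 := \al(L_1)$; your argument shows $\s(K_1) = K_2$. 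Now property (4) of \cref{prop:prime bijection} transports the local degree $\ell^i$ across $\al$, and applying $\s^{-1}$ transports it back, so $\s^{-1}(\al(\p_i)) \in R'(K_1)$ has local degree $\ell^i$ in $L_1$. Since the $\ell^i$ are pairwise distinct this forces $\s^{-1}(\al(\p_i)) = \p_i$. The point is that \cref{lem:ell prime differentiation} must be used \emph{before} \cref{thm:uchida isomorphism}, so that the prime-distinguishing data is already encoded in the field to which you apply it.
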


\begin{proof}
    Since there are finitely many possible isomorphisms $\s:K_1 \to K_2$,
    the final requirement can be changed to
    $\s(\p) = \al(\p)$ for all $\p \in R'(K_1)$,
    where $R'$ is an arbitrary finite subset of $R$.
    Enumerate $R'(K_1)$ as $\p_1, \dots, \p_n$.
    By \cref{lem:ell prime differentiation}, we can find a finite subextension $L_1/K_1$ of $\Om_1/K_1$ 
    such that $[(L_1)_{\fk{P}}:(K_1)_{\p_i}] = \ell^i$ 
    for every $1 \le i \le n$
    and for every $\fk{P} \in \Places_{\p_i}(L_1)$.

    Let $L_2 = \al(L_1)$. By \cref{thm:uchida isomorphism},
    there exists an isomorphism $\s:L_1 \to L_2$.
    Note that $\s(K_1)$ is a subfield of $L_2$
    isomorphic to $K_1$.
    However, $K_1$ is isomorphic to $K_2$ by \cref{thm:uchida isomorphism}.
    It follows that $\s(K_1) \sub \tld{K}_2 \cap \Om_2 = K_2$
    and therefore $\s(K_1) = K_2$.

    Consider an arbitrary $\p_i \in R'(K_1)$
    and $\fk{P} \in \Places_{\p_i}(L_1)$.
    From the fourth part of \cref{prop:prime bijection}
    we have 
    $[(L_2)_{\al(\fk{P})} : (K_2)_{\al(\p_i)}] = [(L_1)_\fk{P} : (K_1)_{\p_i}] = \ell^i$.
    Therefore, 
    $[(L_1)_{\s\inv(\al(\fk{P}))} : (K_1)_{\s\inv(\al(\p_i))}]$
    also equals $\ell^i$.

    The places $\p_i$ and $\s\inv(\al(\p_i))$
    have the same residue characteristic
    by the second part of \cref{prop:prime bijection}.
    In particular $\s\inv(\al(\p_i)) \in R'(K_1)$.
    By the construction of $L_1$,
    which differentiates the places of $R'(K_1)$ by the degrees of their lifts,
    the equality 
    $[(L_1)_{\s\inv(\al(\fk{P}))} : (K_1)_{\s\inv(\al(\p_i))}] = \ell^i$
    is only possible if $\s\inv(\al(\p_i)) = \p_i$,
    i.e. $\s(\p_i) = \al(\p_i)$.
\end{proof}

We can now state and prove our main theorem.

\begin{thm}\label{thm:neukirch uchida}
    Let $\ell$ be a prime number,
    let $K_1, K_2$ be number fields,
    and let $\Om_i/K_i$ be Galois extensions
    that are $\ell$-sealed
    and abundant.
    If $\al:\G{\Om_1/K_1} \to \G{\Om_2/K_2}$ is an isomorphism of profinite groups,
    then there exists an isomorphism of field extensions $\s:\Om_1/K_1 \to \Om_2/K_2$
    that induces $\al$, and it is unique.
\end{thm}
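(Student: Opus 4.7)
The plan is to follow the outline sketched at the start of \cref{sec:full-neukirch-uchida}: construct $\s:\Om_1\to\Om_2$ as an inverse limit of isomorphisms of finite Galois subextensions, each produced by applying \cref{lem:alpha identifies with sigma on primes} to a suitable enlargement. For each finite Galois subextension $L_1/K_1$ of $\Om_1/K_1$, with image $L_2:=\al(L_1)$ under the Galois correspondence, let $A_{L_1}$ denote the set of isomorphisms $\s:L_1\to L_2$ of field extensions such that $\al(g)=\s g\s\inv$ in $\G{L_2/K_2}$ for every $g\in\G{L_1/K_1}$. Each $A_{L_1}$ is finite (either empty, or a torsor under the finite group $Z(\G{L_1/K_1})$), and the obvious restriction maps make $\{A_{L_1}\}$ into a directed inverse system. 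Once each $A_{L_1}$ is known to be non-empty, compactness applied to this inverse system of non-empty finite sets produces a compatible family, yielding an isomorphism $\s:\Om_1\to\Om_2$ inducing $\al$.

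To show $A_{L_1}\ne\emptyset$, I would enlarge $L_1$ so that the hypothesis of \cref{lem:alpha identifies with sigma on primes} is satisfied. Set $L_2':=\tld{L_2}\cap\Om_2$, a finite Galois subextension of $\Om_2/K_2$ containing $L_2$; since $\tld{L_2'}\sub\tld{L_2}$, we automatically have $\tld{L_2'}\cap\Om_2=L_2'$. Let $L_1':=\al\inv(L_2')$, a finite Galois subextension of $\Om_1/K_1$ containing $L_1$. The restricted isomorphism $\al:\G{\Om_1/L_1'}\to\G{\Om_2/L_2'}$ satisfies the hypotheses of the lemma, since $\Om_i$ remains $\ell$-sealed and abundant after changing the base, $\mu_\ell\sub K_i\sub L_i'$, and $\tld{L_2'}\cap\Om_2=L_2'$. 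So the lemma produces an isomorphism $\s':L_1'\to L_2'$ whose induced bijection $R(L_1')\to R(L_2')$ coincides with the one from \cref{prop:prime bijection}.

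The main step is to promote this matching on primes to the statement that $\s'$ induces $\al$ on all of $\G{L_1'/K_1}$; granting this, $\s'(L_1)=L_2$ and so $\s'|_{L_1}\in A_{L_1}$. For $g\in\G{L_1'/K_1}$ and $\fk{P}\in R(L_1')$, combining $\s'(\fk{P})=\al(\fk{P})$ with \cref{prop:prime bijection}(3) gives
\[
    \bigl(\s' g (\s')\inv\bigr)\bigl(\al(\fk{P})\bigr) = \s'(g\fk{P}) = \al(g\fk{P}) = \al(g)\bigl(\al(\fk{P})\bigr),
\]
so $\s' g(\s')\inv$ and $\al(g)$ agree as elements of $\G{L_2'/K_2}$ on all of $R(L_2')$. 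Because $R$ has full density, it contains a rational prime $p$ that splits completely in $L_2'/\Q$; the fiber $\Places_p(L_2')$ of $R(L_2')$ over $p$ is then a free $\G{L_2'/K_2}$-set, so the action of $\G{L_2'/K_2}$ on $R(L_2')$ is faithful. Therefore $\s' g(\s')\inv=\al(g)$ for every $g$. I expect this passage from a prime-level matching to a genuine equality of group elements to be the main obstacle; once it is in hand, the rest is a formal compactness assembly.

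Uniqueness follows by an analogous rigidity argument: two isomorphisms $\s_1,\s_2$ inducing $\al$ differ by a central element $\tau:=\s_1\inv\s_2\in\G{\Om_1/K_1}$, and restricting $\tau$ to any sufficiently large finite Galois $L_1$ and running the same faithfulness-on-primes argument forces $\tau|_{L_1}=1$, hence $\tau=1$. Finally, the two consequences \cref{thm:pro-ell-by-cyclotomic neukirch uchida} and \cref{thm:tamely ramified neukirch uchida} follow from the main theorem by invoking \cref{ex:ell-sealed-examples} and \cref{ex:abundance-examples} to verify that the corresponding $\Om_i$ are both $\ell$-sealed and abundant.
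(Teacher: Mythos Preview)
Your overall architecture matches the paper's: enlarge $L_1$ to $L_1'$ so that \cref{lem:alpha identifies with sigma on primes} applies, promote the prime-matching to an equality of group elements via a faithfulness argument, and then assemble by compactness. The paper frames the compactness step as an intersection of closed subsets $C_{L_1}\sub\G\Q$ rather than an inverse limit of finite sets $A_{L_1}$, but this difference is cosmetic.

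There is, however, a genuine gap in your promotion step. You write that ``$\s' g(\s')\inv$ and $\al(g)$ agree as elements of $\G{L_2'/K_2}$'' and then invoke faithfulness of the $\G{L_2'/K_2}$-action. But a priori $\s' g(\s')\inv$ lies only in $\G{L_2'/\s'(K_1)}$: the lemma you applied has $L_1',L_2'$ as its base fields, so it tells you nothing about $\s'(K_1)$ versus $K_2$. Faithfulness of the $\G{L_2'/K_2}$-action cannot identify two elements unless both already live in that group, so the argument is circular as stated. The paper's fix is to observe that $L_2'/K_2$ and $L_2'/\s'(K_1)$ are both Galois, hence so is $L_2'/(K_2\cap\s'(K_1))$, and to run the free-action argument in this larger group $\G{L_2'/(K_2\cap\s'(K_1))}$; the resulting equality $\G{L_2'/K_2}=\al(\G{L_1'/K_1})=\cnj{\s'}{\G{L_1'/K_1}}=\G{L_2'/\s'(K_1)}$ then yields $\s'(K_1)=K_2$ as a consequence, rather than an assumption. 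Only after that do you know $\s'|_{L_1}$ is an isomorphism of field \emph{extensions} and hence lies in your $A_{L_1}$.

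A smaller slip: you justify the $\mu_\ell$ hypothesis of \cref{lem:alpha identifies with sigma on primes} by ``$\mu_\ell\sub K_i\sub L_i'$'', but \cref{thm:neukirch uchida} does not assume $\mu_\ell\sub K_i$. The paper handles this by starting with $L_1\supseteq K_1(\mu_\ell)\cdot\al\inv(K_2(\mu_\ell))$, which forces $\mu_\ell\sub L_i\sub L_i'$ on both sides. Your uniqueness sketch is fine in spirit, but note that ``the same faithfulness-on-primes argument'' implicitly uses \cref{prop:local groups ell-disjoint} (together with \cref{lem:Xi/ell=0 implies ell^infty | [Xi:F]}) to pass from $\G{\Om/K,\tau\fk P}=\G{\Om/K,\fk P}$ to $\tau\fk P=\fk P$; this is worth making explicit.
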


\begin{proof}
    We begin by proving the existence of $\s$.
    Let $L_1/K_1$ be a finite Galois subextension of $\Om_1/K_1$,
    containing $K_1(\mu_\ell) \cdot \al\inv(K_2(\mu_\ell))$,
    and let $L_2 = \al(L_1)$.
    Let $L_2' = \tld{L}_2 \cap \Om_2$
    and $L_1' = \al\inv(L_2')$.
    Note that $\tld{L_2'} \cap \Om_2 \sub \tld{L}_2 \cap \tld{\Om_2} \cap \Om_2 = L_2'$.
    Note also that $L_i'/K_i$ are Galois and contain $\mu_\ell$.
    By \cref{lem:alpha identifies with sigma on primes},
    there is an isomorphism $\s:L_1' \to L_2'$
    such that $\s(\fk{P}) = \al(\fk{P})$
    for every place $\fk{P} \in R(L_1')$
    for a set $R$ of rational primes with full density.
    By Chebotarev's Density Theorem,
    there is a prime number $p \in R$
    that splits completely in $L_2'$.
    Then for every $g \in \G{L_1'/K_1}$ 
    and $\fk{P} \in \Places_p(L_2')$
    we have
    \[
        (\cnj\s{g})\fk{P} = \s(g(\s\inv(\fk{P}))) = \al(g(\al\inv(\fk{P}))) = \al(g)\fk{P}
    \]
    as primes in $L_2'$.
    Since $L_2'/K_2$ and $L_2'/\s(K_1)$ are both Galois,
    we find that $L_2'/(K_2 \cap \s(K_1))$ is Galois.
    Since $\G{L_2'/(K_2 \cap \s(K_1))}$ acts freely on $\Places_p(L_2')$,
    the above shows that for every $g \in \G{L_1'/K_1}$, the equality $\cnj\s{g} = \al(g)$ holds in $\G{L_2'/(K_2 \cap \s(K_1))}$.
    In particular, we have 
    \[
        \G{L_2'/K_2} = \al(\G{L_1'/K_1}) = \cnj{\s}{\G{L_1'/K_1}} = \G{L_2'/\s(K_1)}
    \]
    as subgroups of $\G{L_2'/(K_2 \cap \s(K_1))}$. This implies $\s(K_1) = K_2$.
    Likewise, we have
    \[
        \G{L_2'/L_2} = \al(\G{L_1'/L_1}) = \cnj{\s}{\G{L_1'/L_1}} = \G{L_2'/\s(L_1)}
    \]
    as subgroups of $\G{L_2'/K_2}$,
    implying that $\s(L_1) = L_2$.
    Note that the conjugation $\cnj{\s}{(-)}:\G{L_1/K_1} \to \G{L_2/K_2}$
    coincides with
    $\al : \G{L_1/K_1} \to \G{L_2/K_2}$.

    For every finite Galois subextension $L_1/K_1$
    of $\Om_1/K_1$, 
    we define $C_{L_1} \sub \G\Q$
    as the set of $\s\in\G\Q$
    such that $\s(K_1) = K_2$,
    $\s(L_1) = \al(L_1)$,
    and the conjugation
    $\cnj{\s}{(-)} : \G{L_1/K_1} \to \G{\al(L_1)/K_2}$
    coincides with $\al : \G{L_1/K_1} \to \G{\al(L_1)/K_2}$.
    Then $C_{L_1}$ is a closed subset of $\G\Q$,
    and was just shown to be nonempty when $L_1$ contains $K_1(\mu_\ell) \cdot \al\inv(K_2(\mu_\ell))$.
    It is clear that $C_{L_1} \cap C_{L_1'} = C_{L_1L_1'}$.
    Thus, the compactness of $\G\Q$
    ensures that there exists an element $\s$ of $\bigcap_{L_1} C_{L_1}$,
    where the intersection runs over all finite Galois subextensions of $\Om_1/K_1$.
    It follows that $\s(K_1) = K_2$,
     $\s(\Om_1) = \Om_2$,
    and also that the conjugation map $\cnj{\s}{(-)} : \G{\Om_1/K_1} \to \G{\Om_2/K_2}$
    coincides with $\al : \G{\Om_1/K_1} \to \G{\Om_2/K_2}$.
    This completes the existence proof.

    For uniqueness,
    we may assume $K_1 = K_2$ and $\Om_1=\Om_2$,
    denoting them by $K$ and $\Om$.
    Let $S$ be a finite set of rational primes
    such that $\Om/K$ is $\ell$-sealed with respect to it.
    Let $\s \in \G{\Om/K}$ be an element
    such that conjugation by it induces the identity automorphism of $\G{\Om/K}$,
    i.e. $\s \in Z(\G{\Om/K})$.
    Let $L$ be a finite Galois subextension of $\Om/K$
    containing $\mu_\ell$.
    Since $\G{\Om/\s(L)} = \cnj\s{\G{\Om/L}}= \G{\Om/L}$,
    we have $\s(L) = L$.
    For every $\fk{P} \in \Places_\fin(\Om) \setminus S(\Om)$ we have
    \[
        \G{\Om/K, \fk{P}} = \cnj\s{\G{\Om/K, \fk{P}}} = \G{\Om/K, \s(\fk{P})}
    \]
    implying that $\s$ acts as the identity on $\Places_\fin(\Om) \setminus S(\Om)$
    by \cref{lem:Xi/ell=0 implies ell^infty | [Xi:F]} and \cref{prop:local groups ell-disjoint}.
    Let $B \sub L$ be the fixed field of $\s$ in $L$.
    By Chebotarev's Density Theorem,
    there is $\p \in \Places_\fin(B) \setminus S(B)$
    that splits completely in $L$.
    Since $\G{L/B} = \ang{\s|_L}$ acts transitively on $\Places_\p(L)$
    while $\s$ acts trivially on it, 
    we must have $B = L$, i.e. $\s|_L = \id_L$.
    Since $L$ is an arbitrarily large finite subextension of $\Om/K$,
    it follows that $\s$ is trivial.
    This concludes the proof.
\end{proof}

\begin{rmk}
    The uniqueness part of the above proof
    did not use the abundance assumption.
\end{rmk}

From \cref{ex:ell-sealed-examples} and \cref{ex:abundance-examples},
we find that the extensions $\Om_i/K_i$
described in both \cref{thm:pro-ell-by-cyclotomic neukirch uchida} and \cref{thm:tamely ramified neukirch uchida}
are $\ell$-sealed and abundant.
Thus both theorems follow from \cref{thm:neukirch uchida}.

\section{Acknowledgments}

The authors' research is co-funded by the European Union (ERC, Function Fields, 101161909). Views and opinions expressed are however those of the authors only and do not necessarily reflect those of the European Union or the European Research Council. Neither the European Union nor the granting authority can be held responsible for them.

% For consistency: I use the APA format.

\end{document}